\newcommand{\R}{{\mathbb R}}
\newcommand{\N}{{\mathbb N}}
\newcommand{\cM}{{\mathcal M}}
\newcommand{\cK}{{\mathcal K}}
\newcommand{\cA}{{\mathcal A}}
\newcommand{\e}{\varepsilon}
\newcommand{\al}{\alpha}
\newcommand{\be}{\beta}
\newcommand{\de}{\delta}
\newcommand{\la}{\lambda}
\newcommand{\hg}{\hat{g}}
\newcommand{\hG}{\hat{G}}
\newcommand{\dist}{\operatorname{dist}}
\newcommand{\loc}{\operatorname{loc}}
\newcommand{\ddiv}{\operatorname{div}}
\newcommand{\D}{\nabla}
\newcommand{\p}{\partial}
\newcommand{\mean}[1]{\langle{#1}\rangle}
\newtheorem{theorem}{Theorem}
\theoremstyle{plain}
\newtheorem{lemma}{Lemma}
\newtheorem{remark}{Remark}
\newtheorem{proposition}{Proposition}
\numberwithin{equation}{section}
\begin{document}

\author{Seongmin Jeon}
\address[Seongmin Jeon]
{Department of Mathematics \newline 
\indent KTH Royal Institute of Technology \newline 
\indent 100 44 Stockholm, Sweden} 
\email[Seongmin Jeon]{seongmin@kth.se}

\author{Henrik Shahgholian}
\address[Henrik Shahgholian]
{Department of Mathematics \newline 
\indent KTH Royal Institute of Technology \newline 
\indent 100 44 Stockholm, Sweden} 
\email[Henrik Shahgholian]{henriksh@kth.se}

\title[Convexity for nonlinear  elliptic  free boundaries]
{Convexity for    free boundaries with  singular term \\  (nonlinear elliptic case)}

\date{\today}
\keywords{} 
\subjclass[2020]{Primary 35E10, 35R35} 
\thanks{HS is supported by Swedish Research Council. This project was carried out  while the authors stayed at Institute Mittag Leffler (Sweden), during the program Geometric aspects of nonlinear PDE}

\begin{abstract}
    We consider a free boundary  problem in an exterior domain \begin{align*}
        \begin{cases}
        Lu=g(u)&\text{in }\Omega\setminus K,\\
        u=1 & \text{on }\p K,\\
        |\D u|=0 &\text{on }\p \Omega,
        \end{cases}
    \end{align*} 
        where $K$ is a (given)  convex and compact set in $\R^n$ ($n\ge2$), $\Omega=\{u>0\}\supset K$ is an unknown set, and $L$ is  either  a fully nonlinear  or the $p$-Laplace  operator.
        Under  suitable assumptions on  $K$ and  $g$, we prove the existence of a nonnegative quasi-concave solution to the above problem.
    
    We also consider the cases when the set $K$ is contained in $\{x_n=0\}$, and obtain  similar results.

\end{abstract}

\maketitle 

\tableofcontents

\section{Introduction}
\subsection{Background}
Let $K$ be a (given) compact convex set in $\R^n$, $n\ge 2$, and $L$ be a nonlinear elliptic differential operator (specified below).
 For a given function $g$, we consider the following obstacle type free boundary problem
\begin{align}
    \label{eq:sol}
    \begin{cases}
    Lu=g(u)&\text{in }\Omega\setminus K,\\
    u=1 & \text{on }\p K,\\
    |\D u|=0 &\text{on }\p \Omega.
    \end{cases}
\end{align}
The assumptions on the right-hand side (r.h.s.) $g$ are specified  in \eqref{eq:assump-rhs} below.
 It is noteworthy that $g$ can be discontinuous and highly singular near $0$, e.g., $g(u)\approx u^a$ with $-1<a<0$ when $0<u\approx 0$.

We shall  consider two kinds of nonlinear operators $L$: 
\begin{itemize}
\item  The fully nonlinear operator $F(D^2u)$ (see below for a defintion).
\item  The $p$-laplace operator $\Delta_pu=\ddiv\left(|\D u|^{p-2}\D u\right)$, $1<p<\infty$.
\end{itemize}

Free boundary problems with highly singular r.h.s. like $u^a$, $-1<a<0$, were studied by Alt-Phillips \cite{AltPhi86} for the Laplacian case. The problems with fully nonlinear operator were treated by Ara\'{u}jo-Teixeira \cite{AraTei13}, and the $p$-Laplacian (with $2\le p<\infty$) by Leit\~{a}o-de Queiroz-Teixeira \cite{LeideQTei15}.

The main objective of this paper is to prove the existence of a quasi-concave solution for \eqref{eq:sol}. Note that a function is called quasi-concave if it has convex super-level sets. 

Convexity configurations in elliptic and  parabolic PDEs have been extensively studied in the literature; e.g.    \cite{BiaLonSal09},  
\cite{ColSal03},
 \cite{CuoSal06}, 
\cite{ElH21},
\cite{ElHSha20}, 
 \cite{GreKaw09}, 
\cite{Kor90}, \cite{LinPri07}, \cite{Pet01}.
 In particular, the second author and El Hajj \cite{ElHSha20} recently investigated the convexity problem concerning the Laplace operator (with both obstacle-type and Bernoulli-type\footnote{Bernoulli-type free 
  boundary problems refers to the case when $|\nabla u| = h(x)$, with given $h >0$, and usually $h$ is constant, but can also have some concavity property.}
  boundary conditions). The present paper generalizes some of its result for Laplacian to nonlinear operators.

We also consider the  case  where  the compact convex set $K$ is contained in $\{x_n=0\}$ and the solution $u$ is defined in $\R^n_+:=\{x=(x',x_n)\in\R^n:x_n>0\}$. In this case the problem is defined as 
\begin{align}
    \label{eq:sol-thin}
    \begin{cases}
     Lu=g(u)&\text{in }\Omega,\\
    u=1 & \text{on } K^{\mathrm{o}},\\
    u=0&\text{on }\p \Omega\setminus K,\\
    |\D u|=0 &\text{on }(\p \Omega\setminus K)
    \cap \R^n_+,
    \end{cases}
\end{align}
where  $\Omega:=\{u>0\}\subset \R^n_+$,
and  $K^{\mathrm{o}}$ is understood as the interior of the set $K$ relative to $\{x_n=0\}$. As above, we aim to find a quasi-concave solution of \eqref{eq:sol-thin}. A similar convexity problem was treated by Lindgren-Privat \cite{LinPri07} for the Bernoulli-type free boundary problem regarding the Laplace operator.

One last problem we deal with is  the following: for a compact convex set $K\subset\{x_n=0\}$, we look for a nonnegative function $u:\R^n\to\R$ with $\Omega:=\{u>0\}\supset K$ such that 
\begin{align}
    \label{eq:sol-hyb}
    \begin{cases}
    Lu=g(u)&\text{in }\Omega,\\
    u=1&\text{on }K^{\mathrm{o}},\\
    |\D u|=0&\text{on }\p \Omega.
    \end{cases}
\end{align}

\subsection{Approach and methodology}
Our approach is based on quasi-concave rearrangements. In doing so, we first consider the 
regularized problem, and  prove the quasi-concavity of the regular  solution, which for definiteness we denote by $v$. The  so called quasi-concave envelope $v^*$ of $v$  is defined as the smallest quasi-concave function greater than or equal to $v$. Equivalently, the super-level sets of $v^*$ are the closed convex hulls of the corresponding super-level sets of $v$. As $v^*\ge v$ by definition, it is sufficient to prove $v\ge v^*$ to obtain the quasi-concavity of $v$.

This method was suggested by Kawohl \cite{Kaw98} and was first employed by Colesanti-Salani \cite{ColSal03}. This technique was exploited in many problems, such as \cite{BiaLonSal09}, \cite{CuoSal06}, \cite{ElHSha20}, etc. We also refer those papers for properties of the quasi-concave envelopes.

In our framework, the highly singular r.h.s. makes the study of the existence of quasi-concave solutions to \eqref{eq:sol}-\eqref{eq:sol-hyb} quite delicate, and substantially technical.
 To circumvent this difficulty, we approximate the r.h.s. $g$ by more regular functions. For this purpose, we first consider regularized problems by replacing $g$ by $h$ (see \eqref{eq:assump-h} for the condition on $h$ as well as the definition of a function $H$ in \eqref{eq:H}).

%%%%%%%%%%%%%%%%%%%%%%%%%%%%%%%%%%%%%%%%%%%%%%%%%%%

\subsection{Main results}

To state our main results, we identify $\R^{n-1}$ with $\{x_n=0\}=\R^{n-1}\times\{0\}\subset \R^n$, and consider the following classes of sets \begin{align*}
    &\cA:=\{K\subset \R^n\,:\, \text{$K$ is a compact convex set with a nonempty interior}\},\\
    &\tilde{\cA}:=\{K\subset\R^{n-1}\,:\, \text{$K$ is compact convex with nonempty interior relative to $\R^{n-1}$}\}.
\end{align*}

For the r.h.s. $g:\R\to\R$, we fix $-1<a<0$, $0<b<1$ and $C_1>c_1>0$, and assume \begin{align}\label{eq:assump-rhs}
    \begin{cases}
    \text{- }g=0\text{ on }(-\infty,0],\\
    \text{- }g\text{ is nonnegative and continuous on }(0,\infty),\\
    \text{- }c_1t^b\le g(t)\le C_1t^a\text{ for }0< t<1,\\
    \text{- }g\text{ is bounded on }[1,\infty).
    \end{cases}
\end{align}
In \eqref{eq:assump-rhs}, the range $-1<a<0$ can be relaxed to $-1< a<1$. This follows from the simple observation that if $g$ satisfies \eqref{eq:assump-rhs} with nonnegative $a\in[0,1)$, then it  does so with negative $a\in(-1,0)$. The lower bounded $a>-1$ is asked for the $C^{1,\al}$-regularity of solutions to the above problems, see Appendix~\ref{appen:grad-holder}.

To specify the fully nonlinear operator   $F(D^2u)$, let $S=S(n)$ be the space of $n\times n$ symmetric matrices. For constants $\Lambda\ge \lambda>0$, we let $\cM^+_{\la,\Lambda}$, $\cM^-_{\la,\Lambda}$ be the extremal Pucci operators 
$$
\cM^+_{\la,\Lambda}(M)=\Lambda\sum_{e_i>0}e_i+\la\sum_{e_i<0}e_i,\qquad \cM^-_{\la,\Lambda}(M)=\la\sum_{e_i>0}e_i+\Lambda\sum_{e_i<0}e_i,
$$
where $e_i$'s are eigenvalues of $M\in S$. We assume $F:S\to \R$ satisfies
\begin{align}
    \label{eq:assump-fully-nonlinear}
    \begin{cases}
    \text{- }F\text{ is uniformly elliptic, i.e., there are constants $\Lambda\ge\la>0$ such that}\\
    \qquad \cM^-_{\la,\Lambda}(M-N)\le F(M)-F(N)\le \cM^+_{\la,\Lambda}(M-N)\text{ for every } M,N\in S,\\
    \text{- }F(0)=0,\\
    \text{- $F$ is concave},\\
    \text{- $F$ is homogeneous of degree $1$: i.e., }    \  F(rM)=rF(M) \ \forall  \ r>0, \ M\in S.
    \end{cases}
\end{align}

The main results in this paper are the following:

\begin{theorem}\label{thm:quasi-concave}
Let $K\in\cA$ and $L$ be either $p$-Laplacian or the fully nonlinear operator $F$ satisfying \eqref{eq:assump-fully-nonlinear}. Suppose  $g:\R\to \R$ satisfies \eqref{eq:assump-rhs}, and when $L$ is $p$-Laplacian  assume $0<b<\min\{1,p-1\}$. Then there exists a nonnegative and quasi-concave function $u$ with bounded $\Omega=\{u>0\}$ solving \eqref{eq:sol}.
\end{theorem}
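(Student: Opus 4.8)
The plan is to follow the quasi-concave rearrangement strategy outlined in the introduction, in three stages: first solve a regularized problem, then prove quasi-concavity at the regularized level, and finally pass to the limit.

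\emph{Step 1: Regularization and existence for the regularized problem.} I would replace the singular nonlinearity $g$ by a family $h=h_\e$ satisfying the stated regularity condition \eqref{eq:assump-h} — bounded, continuous, nondegenerate, and converging monotonically to $g$ as $\e\to0$ — and consider the problem $Lv=h(v)$ in $\{v>0\}\setminus K$, $v=1$ on $\p K$, $|\D v|=0$ on $\p\{v>0\}$. Existence of a solution $v=v_\e$ to this regularized free boundary problem should be obtained by the usual penalization/minimization scheme: either minimize an energy functional of the form $\int |\D w|^p/p + H(w)$ (for the $p$-Laplacian, with $H$ the primitive of $h$ as in \eqref{eq:H}) over an appropriate admissible class with $w=1$ on $\p K$, or for $F$ use a viscosity-solution argument with the obstacle built in. One also needs an a priori bound on the support; this follows from comparison with a radial supersolution, using $h(t)\ge c_1 t^b$ (with $b<p-1$ in the $p$-Laplace case so that the radial ODE has a compactly supported supersolution — this is exactly where the hypothesis $b<\min\{1,p-1\}$ enters). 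The $C^{1,\al}$-regularity up to the free boundary is quoted from Appendix~\ref{appen:grad-holder}.

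\emph{Step 2: Quasi-concavity of $v_\e$.} This is the heart of the argument and the main obstacle. Form the quasi-concave envelope $v^*$ of $v=v_\e$; since $v^*\ge v$ by construction, it suffices to prove $v\ge v^*$. The standard mechanism (Kawohl, Colesanti--Salani) is: suppose at some interior point $x_0$ one has $v(x_0)<v^*(x_0)=\la$; then $x_0$ lies in the convex hull of the superlevel set $\{v>\la\}$, so by Carath\'eodory $x_0=\sum \mu_i x_i$ with $v(x_i)>\la$, $\sum\mu_i=1$. Using the convexity/homogeneity of $F$ (or the structure of $\Delta_p$), a convex-combination (Minkowski/infimal-convolution) comparison shows that the function built from the $v$-values at the $x_i$ is a subsolution of $Lw=h(w)$ near $x_0$ — here the monotonicity of $t\mapsto h(t)$, or rather a concavity-type condition on the zero set structure of $H$, is used so that the nonlinearity behaves correctly under the rearrangement. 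Comparing with $v$ on the relevant region and using that $v=1$ on $\p K$ (and $K$ is convex, so $v^*=1$ on $\p K$ too) and that both vanish on the outer free boundary, one reaches a contradiction via the maximum principle. Care is needed because the free boundary of $v^*$ need not be smooth and $h$ may still be only continuous; one works on the open set where $v^*>0$ and handles the boundary behavior using the $C^{1,\al}$ estimate and the fact that $\{v^*=0\}=\{v=0\}^{\mathrm{conv},c}$ has the expected geometry.

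\emph{Step 3: Passage to the limit.} With $v_\e$ quasi-concave, uniformly bounded, uniformly compactly supported, and uniformly $C^{1,\al}$ (the estimates in Appendix~\ref{appen:grad-holder} being independent of $\e$ because $-1<a<0$ controls the singularity), extract a subsequence converging in $C^1_{\loc}$ to some $u$. Quasi-concavity is preserved under local uniform limits, $u\ge0$, $\Omega=\{u>0\}$ is bounded and convex-superlevel. It remains to check $u$ solves \eqref{eq:sol}: the PDE $Lu=g(u)$ holds in $\{u>0\}\setminus K$ by stability of (viscosity or weak) solutions together with $h_\e\to g$ locally uniformly away from $\{u=0\}$, the condition $u=1$ on $\p K$ passes to the limit by uniform convergence, and the free boundary condition $|\D u|=0$ on $\p\Omega$ follows from the uniform $C^{1,\al}$ bound (so $\D u$ extends continuously) combined with the nondegeneracy/growth $g(t)\ge c_1t^b$, which forces $u$ to detach from zero at rate preventing a positive gradient — equivalently, one shows $|\D v_\e|\to0$ near $\p\Omega_\e$ uniformly. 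This last point, reconciling the singular right-hand side with the gradient vanishing at the free boundary, is the second delicate step, and is where the precise two-sided bound in \eqref{eq:assump-rhs} is essential.
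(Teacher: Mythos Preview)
Your three-stage architecture (regularize, prove quasi-concavity, pass to the limit) matches the paper, and your Steps~1 and~3 are essentially right in spirit, though the paper obtains compact support via a nondegeneracy estimate rather than a radial supersolution. The real gap is in Step~2.

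You propose to show $v^*\le v$ by noting that $v^*$ is a subsolution of $L w=h(w)$ (correct, and this is what the paper quotes from Bianchini--Longinetti--Salani) and then ``comparing with $v$ \dots\ via the maximum principle,'' invoking ``the monotonicity of $t\mapsto h(t)$, or rather a concavity-type condition.'' But $h$ is \emph{not} assumed monotone (only bounded, Lipschitz, and strictly positive on $(0,\infty)$), so there is no comparison principle for $F(D^2w)=h(w)$ between a subsolution and a solution with the same boundary data. This is precisely the obstruction the paper has to work around, and your proposal does not supply a mechanism.

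The paper's device is the \emph{Lavrentiev rescaling}: set $v_t(x):=v(tx)$ and let $t_0=\sup\{t\in(0,1):v_t\ge v^*\text{ on }\{v^*>0\}\}$. Homogeneity of the operator gives $F(D^2 v_{t_0})=t_0^{\,2}\,h(v_{t_0})$ (or the analogous identity for $\Delta_p$). At an interior touching point $x^0$ where $v_{t_0}(x^0)=v^*(x^0)>0$, one then has $h(v^*(x^0))=h(v_{t_0}(x^0))>t_0^{\,2}h(v_{t_0}(x^0))$, so $F(D^2v^*)\ge h(v^*)>t_0^{\,2}h(v_{t_0})=F(D^2v_{t_0})$ in a neighborhood, and the strong minimum principle for $\cM^-_{\la,\Lambda}$ yields a contradiction. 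The strict inequality comes from the factor $t_0^{\,2}<1$, not from any structure of $h$; this is the missing idea.

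You also need to handle the case where the touching point lies on the free boundary, $x^0\in\partial\{v^*>0\}\cap\partial\{v_{t_0}>0\}$. Here the paper uses that near the free boundary $h\equiv 1$ (built into assumption~\eqref{eq:assump-h}), so the problem locally becomes the fully nonlinear obstacle problem $F(D^2v)=\chi_{\{v>0\}}$, for which the free boundary is $C^1$; a cone is then placed inside $\{v^*>0\}$ at $x^0$, and a barrier argument (growth $\gtrsim r^{1+\alpha}$ from below versus $\lesssim r^{1+\gamma}$ from above with $\gamma>\alpha$) gives the contradiction. Your Step~2 does not address this boundary case at all.
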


Since the level sets of   quasi-concave functions are  convex, the theorem implies (after extending $u=1$ in $K$) that the super-level set $\{u\ge l\}$ is convex for every $l>0$.

\begin{theorem}\label{thm:quasi-concave-thin}
Suppose $K\in\tilde{\cA}$ and let $g$ and $L$ be as in Theorem~\ref{thm:quasi-concave}. Then there is a nonnegative quasi-concave solution $u$ to \eqref{eq:sol-thin} with bounded $\Omega=\{u>0\}$.
\end{theorem}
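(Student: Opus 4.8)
The plan is to deduce Theorem~\ref{thm:quasi-concave-thin} from Theorem~\ref{thm:quasi-concave} by a symmetrization/reflection argument, rather than redoing the whole rearrangement machinery. Given $K\in\tilde\cA$, regard $K$ as a compact convex subset of $\{x_n=0\}\subset\R^n$ and, for small $\e>0$, ``fatten'' it to $K_\e:=\{x\in\R^n: \dist(x,K)\le \e\}$, which is a genuine element of $\cA$ (compact, convex, nonempty interior) and is symmetric under the reflection $x=(x',x_n)\mapsto x^*=(x',-x_n)$. Apply Theorem~\ref{thm:quasi-concave} to $K_\e$ (with the same $g$ and $L$, and noting that the condition $0<b<\min\{1,p-1\}$ is unchanged) to obtain a nonnegative quasi-concave $u_\e$ with bounded positivity set $\Omega_\e$ solving \eqref{eq:sol} with $K=K_\e$.

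The next step is to show $u_\e$ inherits the reflection symmetry $u_\e(x)=u_\e(x^*)$. Since $L$ ($p$-Laplacian or $F(D^2u)$) and $g$ are invariant under the orthogonal reflection in $\{x_n=0\}$, the function $\tilde u_\e(x):=u_\e(x^*)$ solves the same problem with the same data $K_\e$; if one has a comparison/uniqueness principle for \eqref{eq:sol} the two coincide. If uniqueness is not available directly, I would instead build symmetry into the construction: in the rearrangement scheme of Theorem~\ref{thm:quasi-concave} the regularized solutions $v$ can be taken symmetric (one can symmetrize the approximating domains and pass the symmetry through the quasi-concave envelope, which commutes with the reflection), so the limit $u_\e$ is symmetric. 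Either way, once $u_\e(x)=u_\e(x^*)$ and $u_\e$ is quasi-concave, its restriction to $\overline{\R^n_+}$ automatically satisfies the Neumann-type condition $\p_{x_n}u_\e=0$ on $\{x_n=0\}\cap\{u_\e>0\}$ (away from $K_\e$, where symmetry forces the normal derivative to vanish), so $u_\e|_{\R^n_+}$ solves a problem close to \eqref{eq:sol-thin} but with the thickened set $K_\e$ in place of $K$.

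Finally I would let $\e\to 0$. The family $\{u_\e\}$ is uniformly bounded (by $1$, since $g\ge0$ forces $u_\e\le 1$ by comparison) and, by the interior $C^{1,\al}$-estimates referenced in Appendix~\ref{appen:grad-holder}, equi-Hölder in $C^{1,\al}_{\loc}$ on compact subsets of $\R^n\setminus K$; the positivity sets $\Omega_\e$ are uniformly bounded because $g(t)\ge c_1 t^b$ gives a uniform upper barrier (a radial supersolution) independent of $\e$. Hence along a subsequence $u_\e\to u$ in $C^1_{\loc}(\R^n\setminus K)$, with $u$ nonnegative, quasi-concave (a locally uniform limit of quasi-concave functions is quasi-concave), symmetric, and with bounded positivity set. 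Passing to the limit in the PDE, in the boundary data $u=1$ on $K$ (using $u_\e=1$ on $\p K_\e$ and that $K_\e\downarrow K$ together with a barrier showing $u\to1$ up to $K^{\mathrm o}$), and in the free boundary conditions (the overdetermined condition $|\D u|=0$ on $\p\Omega$ passes through $C^1$-convergence, and the thin Neumann condition is preserved by symmetry), we obtain that $u|_{\R^n_+}$ solves \eqref{eq:sol-thin}.

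The main obstacle is the passage to the limit near the ``edge'' $\p K$ (the relative boundary of $K$ in $\{x_n=0\}$), where the thickened obstacles $K_\e$ degenerate onto a set of codimension one: one must ensure the solutions do not degenerate there, i.e. that $u$ still satisfies $u=1$ on $K^{\mathrm o}$ and that no spurious loss of positivity occurs along $\p K$; this requires uniform (in $\e$) nondegeneracy and barrier arguments adapted to the thin geometry, presumably the same ingredients used to set up \eqref{eq:sol-thin} in the first place. A secondary technical point is justifying that the rearrangement construction of Theorem~\ref{thm:quasi-concave} can be run so as to keep the reflection symmetry — this is where I would be most careful, since quasi-concave envelopes interact with symmetry favorably but the regularization and the fixed-point/iteration step need checking.
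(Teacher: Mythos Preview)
Your reflection strategy targets the wrong boundary condition on $\{x_n=0\}\setminus K$. Problem \eqref{eq:sol-thin} asks for a function on $\R^n_+$ with $u=0$ on $\R^{n-1}\setminus K$: this is explicit in the regularized version \eqref{eq:sol-const-thin}, and is why the paper remarks that the constructed solution is discontinuous across $\p K$. It is a \emph{Dirichlet} condition on the flat boundary outside $K$. An even-in-$x_n$ function, restricted to $\overline{\R^n_+}$, satisfies instead $\p_{x_n}u=0$ on $\{x_n=0\}$, a \emph{Neumann} condition. Concretely, for each $\e>0$ the positivity set $\Omega_\e\supset K_\e$ is a full open neighborhood of $K_\e$ in $\R^n$, so $u_\e>0$ on a definite portion of $\{x_n=0\}\setminus K$; nothing in your limit forces this positivity to disappear as $\e\to 0$. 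What your construction would produce (granting the symmetry and compactness steps) is essentially a solution of the hybrid problem \eqref{eq:sol-hyb} of Theorem~\ref{thm:quasi-concave-hyb}, restricted to the upper half-space --- not a solution of \eqref{eq:sol-thin}.

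The paper does not reduce Theorem~\ref{thm:quasi-concave-thin} to Theorem~\ref{thm:quasi-concave}; it reruns the program directly in $\R^n_+$ with the Dirichlet condition $v=0$ on $\R^{n-1}\setminus K$ built into the approximating problems. The subsolutions are constructed from translated inner sets $\tilde K^{\tau_m}\subset\{x_n=-\tau_m\}$ so that they vanish on $\R^{n-1}\setminus K$, and the supersolution is capped by an $F$-harmonic function carrying the correct zero data there. The Lavrentiev dilation step then needs genuinely new work near the flat boundary: one must rule out contact of $v_{t_0}$ and $v^*$ on $K^{\mathrm o}$ (via a uniform Hopf lemma) and near $\R^{n-1}\setminus t_0^{-1}K$ (via the structure $h\equiv 1$ near $0$). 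None of this is accessible through fattening and reflecting. As a side remark, your first route to symmetry --- uniqueness for \eqref{eq:sol} --- is also unavailable: the paper leaves uniqueness as an open conjecture.
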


We remark that 
 solutions to equation  \eqref{eq:sol-thin} (to be constructed in the proof of Theorem \ref{thm:quasi-concave-thin}) are not continuous on $\partial K$. This, however, does not affect the convexity of the super-level sets for the solution in Theorem \ref{thm:quasi-concave-thin}.

\begin{theorem}
\label{thm:quasi-concave-hyb}
Let $K\in\tilde{\cA}$ and $g$, $L$ be as above. Then there exists a nonnegative quasi-concave solution $u$ to \eqref{eq:sol-hyb} with compact support.
\end{theorem}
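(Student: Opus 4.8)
The plan is to reduce Theorem~\ref{thm:quasi-concave-hyb} to Theorem~\ref{thm:quasi-concave-thin} by a reflection argument, exploiting the fact that $K\subset\{x_n=0\}$ and that the operators $L$ under consideration (the $p$-Laplacian and the fully nonlinear $F$ satisfying \eqref{eq:assump-fully-nonlinear}) behave well under the reflection $x=(x',x_n)\mapsto \bar x:=(x',-x_n)$. Concretely, let $u_+$ be the quasi-concave solution of \eqref{eq:sol-thin} in $\R^n_+$ produced by Theorem~\ref{thm:quasi-concave-thin}, and define $u$ on all of $\R^n$ by even reflection, $u(x):=u_+(x)$ for $x_n\ge 0$ and $u(x):=u_+(\bar x)$ for $x_n<0$. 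Since $\Delta_p$ is invariant under the orthogonal reflection, and $F(D^2 u)(\bar x)=F(R D^2u_+(\bar x) R)$ with $R=\mathrm{diag}(1,\dots,1,-1)$ — and $M\mapsto RMR$ is an orthogonal conjugation preserving eigenvalues, hence $F\circ(R\cdot R)$ is again a uniformly elliptic, concave, $1$-homogeneous operator with $F(0)=0$ — the reflected function $u$ solves $Lu=g(u)$ in $\{u>0\}$ away from $\{x_n=0\}$. One must check that no spurious interface measure is created along $\{x_n=0\}\setminus K$: there $u_+$ vanishes continuously together with $|\D u_+|$ (that is exactly the free boundary condition $|\D u|=0$ on $(\p\Omega\setminus K)\cap\R^n_+$ in \eqref{eq:sol-thin}), so the even extension is $C^1$ across that portion and the equation holds there in the viscosity/weak sense; on $K^{\mathrm{o}}$ we have $u\equiv 1$ from both sides, consistent with the boundary datum $u=1$ on $K^{\mathrm o}$ in \eqref{eq:sol-hyb}; and on $\p\Omega$ (the lateral free boundary, which lies off the hyperplane) the condition $|\D u|=0$ is inherited directly from $u_+$.

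The second ingredient is the preservation of quasi-concavity under this construction. The super-level sets $\{u\ge l\}$ for $l>0$ are precisely the even reflections $\{u_+\ge l\}\cup\overline{\{u_+\ge l\}}$ of the super-level sets of $u_+$ across $\{x_n=0\}$. Since $\{u_+\ge l\}$ is convex and, by the structure of \eqref{eq:sol-thin}, is a convex subset of $\overline{\R^n_+}$ whose intersection with $\{x_n=0\}$ contains the ``base'' on which $u_+$ is large, its reflection glued along that common face is again convex: the union of a convex body with its mirror image across a supporting-or-interior hyperplane of that body is convex precisely when the two pieces meet in a full-dimensional (or at least the maximal) common slice, which is the case here because $\{u_+=l\}$ meets $\{x_n=0\}$ in the convex set $\{x'\,:\,u_+(x',0)\ge l\}$. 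Thus each $\{u\ge l\}$ is convex, i.e.\ $u$ is quasi-concave. Boundedness of $\Omega=\{u>0\}$, hence compactness of $\spt u$, follows from boundedness of $\{u_+>0\}$ in Theorem~\ref{thm:quasi-concave-thin} together with the boundedness of the reflection. Nonnegativity is immediate.

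I would carry this out in the following order: (i) record the reflection invariance of $\Delta_p$ and of the class \eqref{eq:assump-fully-nonlinear} under $M\mapsto RMR$, so that $u_+$ reflected solves the right equation on each side; (ii) verify the matching conditions across $\{x_n=0\}$ — the $C^1$ gluing on $\{x_n=0\}\setminus (K\cup\p\Omega)$ using $u_+=|\D u_+|=0$ there, and the constancy $u\equiv1$ on $K^{\mathrm o}$ — to conclude $u$ solves \eqref{eq:sol-hyb} globally (away from and across the hyperplane) in the appropriate weak/viscosity sense; (iii) prove the convexity of the reflected super-level sets; (iv) collect boundedness and nonnegativity.

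The main obstacle I anticipate is step~(ii): showing that the even reflection genuinely produces a \emph{solution} of \eqref{eq:sol-hyb} across the portion of $\{x_n=0\}$ that is neither in $K$ nor on the lateral free boundary. The delicate point is the interface $\p K\cap\{x_n=0\}$, the relative boundary of $K$ within the hyperplane, where $u$ jumps from $1$ on $K^{\mathrm o}$ down to values approaching $0$; as the remark after Theorem~\ref{thm:quasi-concave-thin} notes, the solution of \eqref{eq:sol-thin} is already discontinuous on $\p K$, and this discontinuity persists after reflection. One has to argue that this lower-dimensional singular set does not carry any distributional contribution that would violate the equation in $\Omega\setminus\{x_n=0\}$, and that \eqref{eq:sol-hyb} is to be interpreted — consistently with \eqref{eq:sol-thin} — as holding in $\Omega\setminus \p K$ rather than literally everywhere; the quasi-concavity of $u$, however, is unaffected by this codimension-two (relative to $\R^n$) anomaly, exactly as in Theorem~\ref{thm:quasi-concave-thin}. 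A secondary technical point is ensuring the regularity needed to make the gluing rigorous, namely the $C^{1,\al}$ bound on $u_+$ up to $(\p\Omega\setminus K)\cap\R^n_+$ from Appendix~\ref{appen:grad-holder}, which is what legitimizes the even extension being a weak/viscosity solution there.
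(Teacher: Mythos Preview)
Your reflection argument has a genuine gap. The crucial misstep is in reading the free boundary condition of \eqref{eq:sol-thin}: the condition $|\D u_+|=0$ is imposed only on $(\p\Omega\setminus K)\cap\R^n_+$, i.e., on the portion of the free boundary lying in the \emph{open} upper half-space, not on the part of $\p\Omega$ sitting on the hyperplane $\{x_n=0\}$. Points of $\{x_n=0\}\setminus K$ that lie in $\overline{\{u_+>0\}}$ carry only the Dirichlet condition $u_+=0$, and there Hopf's lemma forces $\p_{x_n}u_+>0$. Such points do exist: the positivity set $\{u_+>0\}$ necessarily bulges out beyond $K\times(0,\infty)$, since if the lateral free boundary came down to $\p_{\R^{n-1}}K$ the growth bound $u_+\le C\,d^{\be}$ off that boundary (Theorem~\ref{thm:grad-holder-F}) would contradict $u_+=1$ on $K^{\mathrm o}$ arbitrarily close to $\p_{\R^{n-1}}K$.

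This breaks both of your key steps. For step~(ii), the even reflection develops a convex corner along $(\{x_n=0\}\setminus K)\cap\overline{\{u_+>0\}}$, so $Lu$ acquires a positive singular measure there: the reflected $u$ is only a \emph{subsolution}, not a solution, and those hyperplane points belong to $\p\{u>0\}$ with nonzero one-sided normal derivative, violating $|\D u|=0$ on $\p\Omega$. For step~(iii), pick $(x',x_n)$ with $x'\notin K$, $x_n>0$, $u_+(x',x_n)=:l>0$; then $(x',\pm x_n)\in\{u\ge l\}$ but $(x',0)\notin\{u\ge l\}$ since $u(x',0)=0$, so $\{u\ge l\}$ is not convex and $u$ is not quasi-concave. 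A further issue in the fully nonlinear case is that \eqref{eq:assump-fully-nonlinear} does not force $F(RMR)=F(M)$; that $F\circ(R\cdot R)$ lies in the same class is not enough to conclude that the reflected function satisfies the \emph{same} equation in $\R^n_-$.

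The paper avoids all this by constructing the hybrid solution from scratch. The even reflection of the regularized thin solution is used only as a subsolution barrier (Lemma~\ref{lem:quasi-concave-const-hyb}); the actual solution is produced by a sub/supersolution iteration, and its quasi-concavity is then proved directly via the quasi-concave envelope and the Lavrentiev principle, combining the arguments of Lemmas~\ref{lem:quasi-concave-const} and~\ref{lem:quasi-concave-const-thin}.
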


When the operator $L$ is $p$-laplace operator, we actually prove in Theorems~\ref{thm:quasi-concave}-\ref{thm:quasi-concave-hyb} the existence of energy minimizers of the corresponding functionals, that  become the solutions of \eqref{eq:sol}-\eqref{eq:sol-hyb}.

\begin{remark}\label{rem:unique}(Uniqueness) 
The uniqueness for solutions to the free boundary problem studied in this paper in general, and without any geometric/convexity condition, fails.
For the convex regime (treated here) the uniqueness may well be true, but requires far advanced  technical apparatus than used in this paper. Indeed, one can easily see that uniqueness in the class of $C^{1,Dini}$ domains is true, using the Lavrentiev Principle used in the proofs below. 
For example, for $D$, $D_{t_0}$ and $D^*$ as in \emph{Step 2} in Lemma~\ref{lem:quasi-concave-const}, 
we cannot exclude the possibility of  a point $x^0\in D^*\cap D_{t_0}$ such that $D^*$ has a singularity at $x^0$, see Fig. \ref{fig:singularity} (in the figure, the convex envelope $D^*$ of $D$ is the triangle containing $D$).

It is however plausible that one can with some further study of the regularity of the free boundary, as well as that of the solutions, obtain a general uniqueness theory in the  convex regime, or even in the starshaped regime. 

As these technical issues are outside the scope of this paper, we only conjecture what we believe this to be true.

\smallskip

\noindent
{\bf Conjecture 1:}  The solution to our free boundary problem, in the convex setting, is unique.

\smallskip

It is noteworthy that the corresponding interior problem in general do not admit a unique solution, even in the case when $K$ is a ball. The existence of solution with convex level sets, is an open problem.

\noindent
{\bf Conjecture 2:} For   the interior problem, i.e. when $K \supset \Omega $, 
there  exists  of a solution with  convex sub-level sets, i.e. convexity of $\{ u < l \}$. It is probable  that all solutions in the interior case have this convexity property.

\end{remark}

\begin{figure}[h]
\begin{picture}(100,100)(0,0)
\put(0,0){\includegraphics[height=100pt]{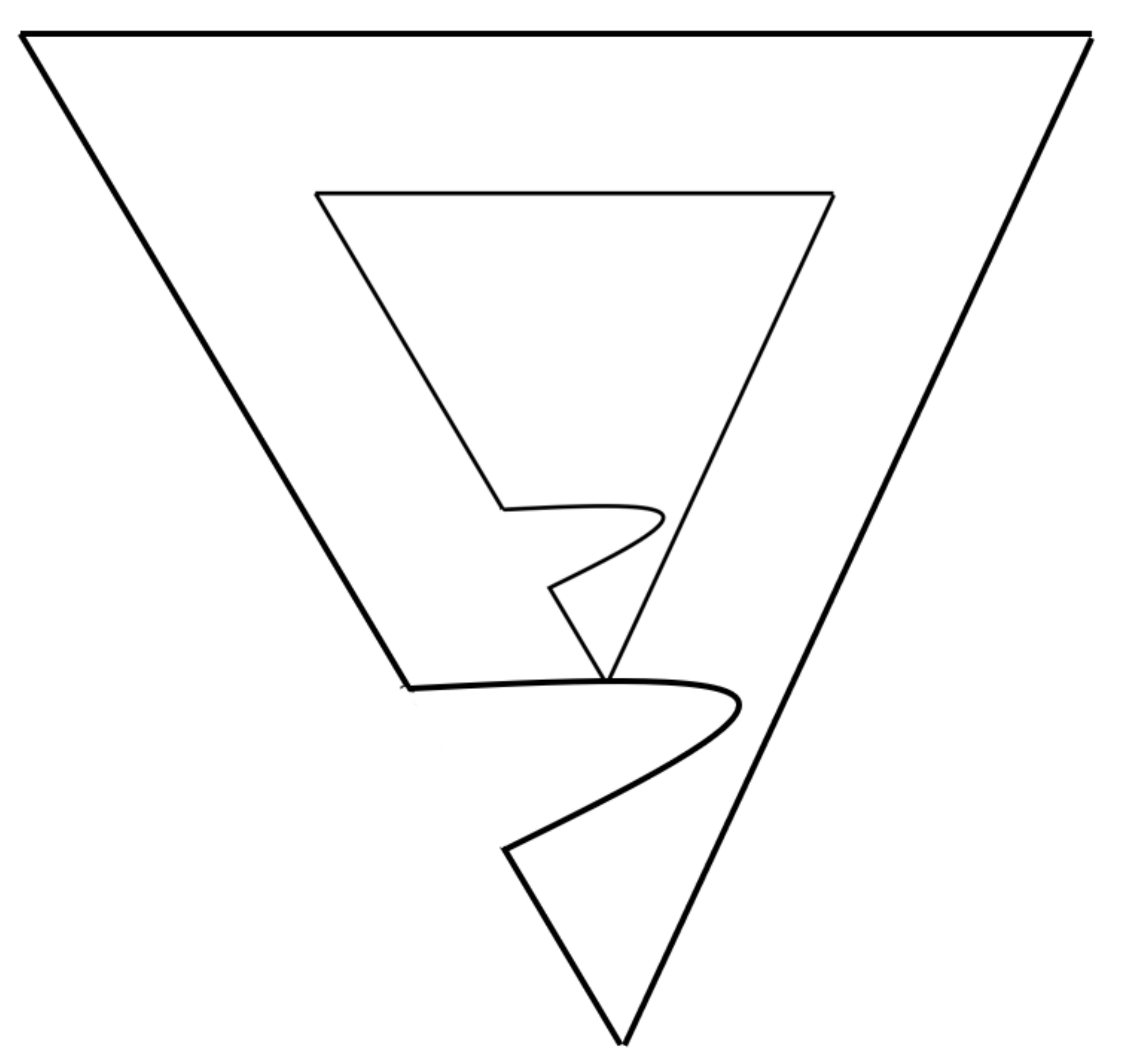}}
\put(75,65){\small $D$}
\put(80,40){\small $D_{t_0}$}
\put(55,29){\scriptsize $x^0$}
\end{picture}
\caption{ Related to Remark~\ref{rem:unique}}
\label{fig:singularity}
\end{figure}

%%%%%%%%%%%%%%%%%%%%%%%%%%%%%%%%%%%%%%%%%%%%%

\subsection{Notation}
We denote the points of $\R^n$ by $x=(x',x_n)$, where $x'=(x_1,\cdots,x_{n-1})\in\R^{n-1}$, and identify $\R^{n-1}$ with $\R^{n-1}\times\{0\}$. By $\R^n_{\pm}$, we mean open half spaces $\{(x',x_n)\in\R^n:\pm x_n>0\}$.

We denote balls of radius $r$ by 
\begin{align*}
    &B_r(x^0):=\{x\in\R^n:|x-x^0|<r\}:\quad\text{ball in }\R^n,\\
    &B_r^\pm(x^0):=B_r(x^0)\cap\{\pm x_n>0\}:\quad\text{half ball in }\R^n,\\
    &B'_r(x^0):=B_r(x^0)\cap\{x_n=0\}:\quad\text{ball in }\R^{n-1}.
\end{align*}

Given a function $u:\R^n\to\R$, we will write $$
\p_{x_i}u=\frac{\p u}{\p x_i}, \quad \p_{x_ix_j}u=\frac{\p^2u}{\p x_ix_j},\quad i,j=1,\cdots,n.
$$
We denote the gradient of $u$ by $$
\D u=D u=(\p_{x_i}u,\cdots,\p_{x_n}u).
$$
We also indicate by $D^2u$ the Hessian of $u$, i.e., the $n\times n$ matrix with entries $\p_{x_ix_j}u$.

For a set $A\subset \R^n$, we denote the distance function from $A$ by
$$
d_A(x):=\dist(x,A).
$$

For $-1<a<0$, we fix the following constant throughout this paper 
\begin{equation}\label{beta}
\be:=\frac2{1-a}\in(1,2).
\end{equation}

%%%%%%%%%%%%%%%%%%%%%%%%%%%%%%%%%%%%%%%%%%%%%%%%%%

\section{Proof of Theorem~\ref{thm:quasi-concave}}

In  this section we  prove Theorem~\ref{thm:quasi-concave}. 
We  treat   the fully nonlinear case in Subsection~\ref{subsec:fully-nonlinear} and the $p$-Laplacian case in Subsection~\ref{subsec:p-lap}. 
As mentioned above, in each case we first consider the regularized problem. For this purpose, we define two functions $h$ and $H$ as follows: let $h:\R\to\R$ be a function satisfying for some constant $\e_1>0$ 
\begin{align}\label{eq:assump-h}
    \begin{cases}
    \text{- }h=0\text{ on }(-\infty,0],\\
    \text{- }h\text{ is bounded, Lipschitz continuous and strictly positive on }(0,\infty),\\
    \text{- }h(t)\ge c_1t^b \text{ for }0<t<1 \text{ and } h(t)=1\text{ for }0<t<\e_1,\\
    \text{- }h\le g+2\text{ on }(0,\infty),
    \end{cases}
\end{align}
and define  $H:\R\to\R$ by
\begin{align}
    \label{eq:H}
    H(t):=\int_{-\infty}^th(s)\,ds.
\end{align}
Notice that $H$ is  nonnegative and nondecreasing  in $\R$, and $H=0$ on $(-\infty,0)$.

\subsection{Fully nonlinear case}\label{subsec:fully-nonlinear}
Here we  deal with  the fully nonlinear operator $F(D^2u)$, by first showing existence with the regularized r.h.s.

\begin{lemma}\label{lem:quasi-concave-const}
Let $K\in \cA$ and $h$ be a function satisfying \eqref{eq:assump-h}. Then there exists a solution $v$ of the problem
\begin{align}
    \label{eq:sol-const}
    \begin{cases}
    F(D^2v)=h(v)&\text{in }\R^n\setminus  K,\\
    v=1&\text{on }\p K,
\end{cases}\end{align} 
which is compactly-supported, nonnegative and quasi-concave (after extending $v=1$ on $K$).
\end{lemma}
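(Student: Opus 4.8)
The plan is to construct $v$ by an iteration (Lavrentiev-type penalization / obstacle scheme) and then to prove quasi-concavity via the quasi-concave envelope. First I would set up the \emph{regularized obstacle problem on a large ball}: fix a large radius $R$ (to be chosen so that the support of any solution is well inside $B_R$), and on $B_R\setminus K$ consider the problem $F(D^2v)=h(v)$ with $v=1$ on $\partial K$ and $v=0$ on $\partial B_R$. Since $h$ is bounded, Lipschitz and strictly positive on $(0,\infty)$, the Dirichlet problem is solvable by Perron's method / viscosity-solution theory for uniformly elliptic concave $F$ (here the homogeneity and $F(0)=0$ give scale invariance and the trivial subsolution $0$), and $0\le v\le 1$ by comparison. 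A barrier argument — comparing $v$ from above with a radial supersolution built from the explicit profile $d_{\partial\Omega}^{\beta}$ with $\beta=2/(1-a)$ as in \eqref{beta}, using the lower bound $h(t)\ge c_1 t^b$ and $h(t)=1$ near $0$ — shows $v\equiv 0$ outside some ball $B_{R_0}$ with $R_0$ \emph{independent of $R$} (this is where the growth condition on $h$ near $0$ is essential and where the restriction $0<b<p-1$ would enter in the $p$-Laplacian analogue; in the fully nonlinear case the uniform-ellipticity constants play that role). Hence for $R$ large, $v$ is a compactly supported solution of \eqref{eq:sol-const}, nonnegative, and (by interior elliptic regularity plus the Appendix~\ref{appen:grad-holder} $C^{1,\alpha}$ estimate up to the free boundary) regular enough to run the envelope argument.

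Next I would prove quasi-concavity. Extend $v$ by $1$ on $K$; then $v$ is defined on all of $\R^n$, upper semicontinuous, with $\{v>0\}$ bounded and $\{v\ge 1\}\supset K$ convex. Let $v^*$ be the quasi-concave envelope (smallest quasi-concave function $\ge v$), so its superlevel sets are the closed convex hulls of those of $v$; it suffices to show $v\ge v^*$, equivalently $v^*\le v$. Following the Colesanti–Salani / Kawohl method, suppose not; then there is a level $t_0\in(0,1)$ and a point $x^0\in D^*:=\{v^*\ge t_0\}^{\mathrm o}$ (interior) with $x^0\notin D_{t_0}:=\{v\ge t_0\}$, and one argues at a point where the contact $v=v^*$ \emph{first fails} as $t_0$ increases. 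The key algebraic input is the \emph{concavity function} estimate: for points $y,z$ where $v$ is (locally) the relevant value and $x^0=\theta y+(1-\theta)z$, one combines the ellipticity and concavity of $F$ with the convexity/monotonicity of $h$ after the change of variables $w=H(v)$ (or the analogous substitution making the equation amenable), to show that a suitable combination of $v$ at $y$, $z$ cannot lie strictly below $v(x^0)$ — contradicting $x^0\notin D_{t_0}$. Here $F$ homogeneous of degree $1$ and concave is exactly what lets the Pucci inequalities close the convexity computation, and the structure of $h$ (in particular $h=1$ near $0$, so the equation is $F(D^2v)=1$ near the free boundary) handles the delicate behavior where the level sets touch $\partial\Omega$.

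The main obstacle I expect is precisely the one flagged in Remark~\ref{rem:unique} and Figure~\ref{fig:singularity}: the convex envelope $D^*$ of a superlevel set $D$ may develop a \emph{singularity} (a non-smooth boundary point) at an interior point $x^0\in D^*\cap D_{t_0}$, where one cannot simply write $x^0$ as a nontrivial convex combination of two points of $D_{t_0}$ lying on the free boundary with well-defined touching geometry. Handling this requires the Carathéodory representation of $x^0$ as a convex combination of boundary points of $D_{t_0}$ together with a careful limiting/perturbation argument (sliding the convex combination, or using the structure of the free boundary given by the $C^{1,\alpha}$ regularity) to still extract the contradiction. The secondary technical points — verifying $v^*$ is an admissible comparison function, checking the equation is satisfied in the viscosity sense across the gluing on $\partial K$, and the uniform-in-$R$ compact-support estimate — are routine given the hypotheses and the Appendix, but the envelope-singularity step is where the real work lies.
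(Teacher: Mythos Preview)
Your plan has a genuine gap in the quasi-concavity step. You propose to compare $v$ with its quasi-concave envelope $v^*$ via a two-point convex-combination (Colesanti--Salani/Korevaar ``concavity function'') argument, invoking ``the convexity/monotonicity of $h$ after the change of variables $w=H(v)$''. But $h$ is \emph{not} assumed monotone in \eqref{eq:assump-h}, so the equation $F(D^2w)-h(w)=0$ is not proper and no direct comparison principle between the subsolution $v^*$ and the solution $v$ is available. The paper gets around this by an entirely different mechanism: the \emph{Lavrentiev dilation}. One sets $v_t(x):=v(tx)$ and considers $t_0:=\sup\{t\in(0,1):v_t\ge v^*\ \text{on}\ D^*\}$. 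The point is that $F(D^2v_t)=t^2h(v_t)$, so at a contact point $x^0$ where $v_{t_0}(x^0)=v^*(x^0)>0$ one has $h(v_{t_0}(x^0))=h(v^*(x^0))$ automatically, and the strict inequality $F(D^2v^*)\ge h(v^*)>t_0^2\,h(v_{t_0})=F(D^2v_{t_0})$ comes from $t_0<1$ and $h>0$ on $(0,\infty)$, \emph{not} from any monotonicity of $h$. This gives $\mathcal{M}^-_{\lambda,\Lambda}(D^2(v_{t_0}-v^*))<0$ near $x^0$ and the strong minimum principle finishes the interior case. Your outline never introduces this scaling parameter, and the substitute you propose cannot close without an extra hypothesis on $h$.

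Your identification of the free-boundary contact $x^0\in\partial D^*\cap\partial D_{t_0}$ as the delicate case is correct, but the resolution is sharper than ``Carath\'eodory plus sliding''. The paper writes $x^0=\sum\lambda_ix^i$ with $x^i\in\partial D$, uses that near each $x^i$ the equation reads $F(D^2v)=\chi_{\{v>0\}}$ (because $h\equiv1$ on $(0,\varepsilon_1)$) so the free boundary is $C^1$ there by \cite{Lee98}, builds a small cone $\mathcal{C}_{x^0}\subset D^*$ of arbitrarily small aperture, and then compares growth rates: an $\mathcal{M}^-$-harmonic barrier in the cone forces $\sup_{B_r(x^0)}(v_{t_0}-v^*)\ge cr^{1+\alpha}$, while $C^{1,\gamma}$ regularity of $v_{t_0}$ with $\gamma>\alpha$ gives $v_{t_0}\le Cr^{1+\gamma}$, a contradiction. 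This is the argument you would need to supply.

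A smaller point on existence: Perron's method with boundary data $v=0$ on $\partial B_R$ is awkward because $h$ has a jump at $0$ ($h(0)=0$ but $h(0^+)=1$). The paper instead approximates $h$ by continuous $h^j$ (linearly interpolating across $[1/(2j),1/j]$), takes outer boundary value $1/(2j)$, and runs a monotone iteration between an explicit subsolution $w_\sharp$ (built from $d_{K^\tau}^\beta$, which is how the profile $d^\beta$ enters---as a \emph{sub}solution near $K$, not as a barrier for compact support) and an explicit radial supersolution $w^\sharp_j$; the limit $j\to\infty$ gives $v$.
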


\begin{proof}
\emph{Step 1.} 
For each $j\in \N$, large enough so that $1/j<\e_1$, we define a function $h^j:\R\to \R$ by
\begin{align}\label{eq:h^j}
    h^j(t):=\begin{cases}
    0,&t\le 1/(2j),\\
    2jt-1,&1/(2j)< t< 1/j,\\
    h(t),&t\ge 1/j.
    \end{cases}
\end{align}  
By the definition of $h$, and that $1/j<\e_1$,  each $h^j$ is Lipschitz in $\R$ and $h^j\nearrow h$ as $j\to\infty$.
 
 We claim that for some large constant $R_0>1$, independent of $j$, with $B_{R_0}\Supset K$ 
  there exists a nonnegative solution $v^j:B_{2R_0}\setminus K\to\R$ to \begin{align}
    \label{eq:sol-const-reg}
    \begin{cases}
    F(D^2v^j)=h^j(v^j)&\text{in }B_{2R_0}\setminus K,\\
    v^j=1&\text{on }\p K,\\
    v^j=\frac1{2j}&\text{on }\p B_{2R_0},
    \end{cases}
\end{align}
with $v^j=\frac1{2j}$ in $B_{2R_0}\setminus\overline{B_{R_0}}$.

To find a solution to \eqref{eq:sol-const-reg}, we will follow the idea in the proof of Theorem~2.1 in \cite{RicTei11}. We first construct  a subsolution to  \eqref{eq:sol-const-reg}. Fix small constants $\rho>0$ and $\tau>0$ depending only on $a$, $\la$, $\Lambda$, $K$, to be specified below, and define a subset of $K$ 
$$
K^\tau:=\{x\in K\,:\, d_{\p K}(x)\ge\tau\} \Subset K.
$$
Note that $K^\tau$ is not empty if $\tau>0$ is small enough, 
 and that $d_{K^\tau}(x)\ge\tau$ 
for $x\in\R^n\setminus K$. 
Recall  the notation \eqref{beta},    $\be=\frac2{1-a}$,  and define 
\begin{align}
    \label{eq:subsol}
    w_\sharp(x):=\begin{cases}
    \frac1{\rho^\be}\left[\tau+\rho-d_{K^\tau}(x)\right]^\be,& \tau<d_{K^\tau}(x)<\tau+\rho,\\
    0,& \qquad d_{K^\tau}(x)\ge\tau+\rho.
    \end{cases}
\end{align}
Then $w_\sharp=1$ on $\p K$, and  for  $\{x\in\R^n\setminus K\,:\,\tau<d_{K^\tau}<\tau+\rho\}$ we can compute
\begin{align*}
D^2w_\sharp=\frac{\be(\be-1)}{\rho^\be}\left(\tau+\rho-d_{K^\tau}\right)^{\be-2}\D d_{K^\tau}\otimes\D d_{K^\tau}-\frac{\be}{\rho^\be}\left(\tau+\rho-d_{K^\tau}\right)^{\be-1}D^2d_{K^\tau},
\end{align*}
and obtain 
\begin{align*}
    F(D^2w_\sharp)\ge& \cM^-_{\la,\Lambda}(D^2w_\sharp)\\
    \ge& \frac{\be(\be-1)}{\rho^\be}\left(\tau+\rho-d_{K^\tau}\right)^{\be-2}\cM^-_{\la,\Lambda}\left(\D d_{K^\tau}\otimes\D d_{K^\tau}\right) \\    
    &    -\frac\be{\rho^\be}\left(\tau+\rho-d_{K^\tau}\right)^{\be-1}\cM^+_{\la,\Lambda}\left(D^2d_{K^\tau}\right).
\end{align*}
Since the eigenvalues of $\D d_{K^\tau}\otimes\D d_{K^\tau}$ are $|\D d_{K^\tau}|^2,0,\cdots,0$ and $|\D d_{K^\tau}|\ge1$, we have $\cM^-_{\la,\Lambda}(\D d_{K^\tau}\otimes\D d_{K^\tau})\ge \la$. Moreover, $\cM^+_{\la,\Lambda}(D^2 d_{K^\tau})\le C(\la,\Lambda,K,\tau)$ in $\{\tau<d_{K^\tau}<\tau+\rho\}$ (see e.g. Theorem~4.8 in \cite{Fed59}), and thus we have for small $\rho>0$ $$
F(D^2w_\sharp)\ge\frac{c(a,\la,\Lambda,\tau)}{\rho^\be}(\tau+\rho-d_{K^\tau})^{\be-2}\quad \text{in } \{\tau<d_{K^\tau}<\tau+\rho\}.
$$ 
 To prove $F(D^2w_\sharp)\ge h^j(w_\sharp)$ in $\{\tau<d_{K^\tau}<\tau+\rho\}$,
  we observe that in the set 
  $\{\tau<d_{K^\tau}<\tau+\rho\}$ $$
F(D^2w_\sharp)\ge\frac{c}{\rho^\be}(\tau+\rho-d_{K^\tau})^{\be-2}\ge\frac{c}{\rho^2}
$$
and 
$$
w_\sharp^a=\frac{(\tau+\rho-d_{K^\tau})^{\be a}}{\rho^{\be a}}=\frac{\rho^{\be(1-a)}}{c}\cdot\frac{c(\tau+\rho-d_{K^\tau})^{\be-2}}{\rho^\be}\le\frac{\rho^{\be(1-a)}}cF(D^2w_\sharp).
$$
By using the above two estimates for $F(D^2w_\sharp)$ from below, and that 
 $\frac{c}{\rho^2}$ and $\frac{c}{\rho^{\be(1-a)}}$ are large when $\rho$ is small, we obtain 
 \begin{align*}
    F(D^2w_\sharp)\ge\frac12\left(\frac{c}{\rho^{\be(1-a)}}w_\sharp^a+\frac{c}{\rho^2}\right)&\ge\frac12(2g(w_\sharp)+4)=g(w_\sharp)+2\ge h^j(w_\sharp)
\end{align*}
for $\rho>0$ small enough. Here, we used \eqref{eq:assump-rhs} in the second step and the inequalities $h^j\le h\le g+2$ in the last step.

 We now define a continuous function 
\begin{align*}
    w_{\sharp,j}(x):=
    \begin{cases}
    w_\sharp(x),&\tau<d_{K^\tau}(x)<\tau+\rho-\rho\left(\frac1{2j}\right)^{1/\be}, \medskip \\ 
    \frac1{2j},&d_{K^\tau}(x)\ge\tau+\rho-\rho\left(\frac1{2j}\right)^{1/\be},
    \end{cases}
\end{align*}
for which  we have $F(D^2w_\sharp)\ge h^j(w_\sharp)$ in $\{\tau<d_{K^\tau}<\tau+\rho\}$, $F\left(D^2\left(\frac1{2j}\right)\right)=0=h^j\left(\frac1{2j}\right)$, and $\frac1{2j}=\max\left(w_\sharp,\frac1{2j}\right)$ in $\left\{\tau+\rho-\rho\left(\frac1{2j}\right)^{1/\be}<d_{K^\tau}<\tau+\rho\right\}$. This implies $F(D^2w_{\sharp,j})\ge h^j(w_{\sharp,j})$ in $\R^n\setminus K$. As we clearly have $w_{\sharp,j}=w_\sharp=1$ on $\p K=\{d_{K^\tau}=\tau\}$ and $w_{\sharp,j}=\frac1{2j}$ in $\left\{d_{K^\tau}\ge\tau+\rho-\rho\left(\frac1{2j}\right)^{1/\be}\right\}$, $w_{\sharp,j}$ is a subsolution of \eqref{eq:sol-const-reg} with $w_{\sharp,j}=\frac1{2j}$ in $B_{2R_0}\setminus\overline{B_{R_0}}$ for any $R_0>1$.\\

Next, we construct a supersolution of \eqref{eq:sol-const-reg}. To this aim, we take an open ball $B_{r_0}$ such that $r_0>1$ and $B_{r_0/2}\supset K$. For a large constant $R_0\gg r_0$ to be chosen later, we consider a continuous function $w^\sharp_j:\R^n\setminus K\to\R$ defined by 
\begin{align}\label{eq:supersol}
    w^\sharp_j(x):=\begin{cases}
    1,&x\in B_{r_0}\setminus K,\\
    \left(1-\frac1{2j}\right)\left(\frac{R_0-|x|}{R_0-r_0}\right)^4+\frac1{2j},&x\in B_{R_0}\setminus \overline{B_{r_0}},\\
    \frac1{2j},&x\in\R^n\setminus \overline{B_{R_0}}.
    \end{cases}
\end{align}
Note that $w^\sharp_j$ is $C^2$ in $\R^n\setminus\overline{B_{r_0}}$, in particular, across $\p B_{R_0}$. A direct computation yields
 \begin{align*}
    D^2w^\sharp_j(x)=4\left(1-\frac1{2j}\right)\frac{(R_0-|x|)^2}{(R_0-r_0)^4}\left[\left(\frac{R_0+2|x|}{|x|^3}\right)x\otimes x+\left(\frac{|x|-R_0}{|x|}\right)I\right],\quad x\in B_{R_0}\setminus \overline{B_{r_0}}.
\end{align*}
Thus, $$
F(D^2w^\sharp_j)=F(0)=0\quad\text{on }\p B_{R_0},
$$
and for $R_0\gg r_0$ $$
F(D^2w^\sharp_j)\le \cM^+_{\la,\Lambda}(D^2w^\sharp_j)\le\frac{C(n,\la,\Lambda)}{R_0}\quad\text{in }B_{R_0}\setminus\overline{B_{r_0}}.
$$
Note that $\frac1{2j}\le w^\sharp_j\le 1$ in $B_{R_0}\setminus\overline{B_{r_0}}$. Since $1/2\le h^j(t)\le 1$ for $\frac1{2j}\le t\le \e_1$ and $h^j(t)=h(t)$ for $t>\e_1$, we have that $\inf_{\left[\frac1{2j},1\right]}h^j$ is bounded below by a positive constant, independent of $j$. Thus we can find a large constant $R_0>0$, independent of $j$, such that $F(D^2w^\sharp_j)\le \inf_{\left[\frac1{2j},1\right]}h^j\le h^j(w^\sharp_j)$ in $B_{R_0}\setminus \overline{B_{r_0}}$. Since $h^j\ge0$, we further have $F(D^2w_j^\sharp)\le h^j(w^\sharp_j)$ in $\R^n\setminus\overline{B_{r_0}}$. Moreover, we clearly have $F(D^2w^\sharp_j)=0\le h^j(w^\sharp_j)$ in $B_{r_0}\setminus K$. Thus, to prove that $w^\sharp_j$ is a supersolution of \eqref{eq:sol-const-reg}, it is enough to show $F(D^2w^\sharp_j)\le h^j(w^\sharp_j)$ in $B_{R_0}\setminus\overline{B_{r_0/2}}$.

To this aim, we let $w^\sharp_{j,1}(x):=1$ and $w^\sharp_{j,2}(x):=\left(1-\frac1{2j}\right)\left(\frac{R_0-|x|}{R_0-r_0}\right)^4+\frac1{2j}$ for $x\in B_{R_0}\setminus\overline{B_{r_0/2}}$. 
By taking $R_0$ larger if necessary, but still independent of $j$, we can show that $F(D^2w^\sharp_{j,2})\le \inf_{\left[\frac1{2j},1\right]}h^j$ in $B_{R_0}\setminus\overline{B_{r_0/2}}$ by arguing as we did for $w^\sharp_j$ above. Since we also have $F(D^2w^\sharp_{j,1})=0\le \inf_{\left[\frac1{2j},1\right]}h^j$ and $w^\sharp_j=\min\{w^\sharp_{j,1}, w^\sharp_{j,2}\}$ in $B_{r_0}\setminus\overline{B_{r_0/2}}$, we infer that $F(D^2w^\sharp_j)\le\inf_{\left[\frac1{2j},1\right]}h^j\le h^j(w^\sharp_j)$ in $B_{R_0}\setminus\overline{B_{r_0/2}}$.

Now, we will use $w_{\sharp,j}$ and $w^\sharp_j$ to find a solution to \eqref{eq:sol-const-reg}.
 We let $\mu^j>0$  be such that $2\|\D h^j\|_{L^\infty((0,1))}<\mu^j$, and  construct 
  a sequence of functions $\{w^j_k\}^\infty_{k=0}$ defined in $B_{2R_0}\setminus K$ as follows:  set $w^j_0:=w_{\sharp,j}$, and for $k\ge0$ let $w^j_{k+1}$ be the unique solution of \begin{align*}
    \begin{cases}
    F(D^2w^j_{k+1})-\mu w^j_{k+1}+\mu w^j_k-h^j(w^j_k)=0&\text{in }B_{2R_0}\setminus K,\\
    w^j_{k+1}=1&\text{on }\p K,\\
    w^j_{k+1}=\frac1{2j}&\text{on }\p B_{2R_0}.
    \end{cases}
\end{align*}
Following the argument in the proof of Theorem~2.1 in \cite{RicTei11}, we can show that $$
w_{\sharp,j}=w^j_0\le w^j_1\le\cdots\le w^j_k\le w^j_{k+1}\le \cdots \le w^\sharp_j,
$$
and that the limit $v^j(x):=\lim_{k\to\infty}w^j_k(x)$ exists and is a solution of \eqref{eq:sol-const-reg}.

Since $h^j$ (the r.h.s. of \eqref{eq:sol-const-reg})
has uniform bound, independent of $j$, the limit $v:=\lim_{j\to\infty}v^j$ exists over a subsequence in $B_{2R_0}\setminus K$ and it satisfies $F(D^2v)=h(v)$ in $B_{2R_0}\setminus K$. Clearly, $v$ is nonnegative and $v=0$ in $B_{2R_0}\setminus\overline{B_{R_0}}$. Moreover, since $F(D^2v^j)=h^j(v^j)\ge0$ in $B_{2R_0}\setminus K$ and $v^j\le 1$ on $\p(B_{2R_0}\setminus K)$, we have $v^j\le 1$ in $B_{2R_0}\setminus K$, by the maximum principle. Taking $i\to\infty$ and extending $v=0$ in $\R^n\setminus \overline{B_{2R_0}}$, we get $v\le 1$ in $\R^n\setminus K$. On the other hand, from $v^j\ge w_{\sharp,j}\ge w_\sharp$ in $B_{2R_0}\setminus K$ and $w_\sharp=0$ on $\R^n\setminus \overline{2R_0}$, we have $v^j\ge w_\sharp$ in $\R^n\setminus K$, and thus $v\ge w_\sharp$ in $\R^n\setminus K$. In sum, we have $w_\sharp\le v\le 1$ in $\R^n\setminus K$, and since $w_\sharp=1$ on $\p K$, we get $v=1$ on $\p K$. Therefore, $v$ is a compactly-supported nonnegative solution to \eqref{eq:sol-const}.

\medskip\noindent \emph{Step 2.} 
To prove the quasi-concavity of $v$, let $v^*$ be the quasi-concave envelope of $v$. 
Then, by Proposition~\ref{prop:v*-subsol} in Appendix~\ref{appen:quasi-concave-subsol},
 $v^*$ is a subsolution to  \eqref{eq:sol-const}.
Since $v^*$ is a quasi-concave function, it suffices  to show  $v^*\equiv v$. For this purpose, we use Lavrentiev Principle. Without loss of generality, we assume $0\in K^{\mathrm{o}}$ and define for $t>0$ \begin{align*}
&D:=\{x\in\R^n:v(x)>0\},\quad D^*:=\{v^*>0\}\\ &v_t(x):=v(tx),\quad  D_t:=\{x:tx\in D\}=\{x:v_t(x)>0\}.
\end{align*}
Let
$$
E:=\{0<t<1\,:\,v_t(x)\ge v^*(x)\text{ for all } x\in D^*\}.
$$
The fact that $v^*\le\|v\|_{L^\infty(\R^n)}=1$ in the bounded set $D^*$ and $v_t=1$ in $t^{-1}K:=\{x:tx\in K\}$, together with $0\in K^{\mathrm{o}}$, implies that $v_t\ge v^*$ in $D^*$ for $t>0$ small so that $D^*\subset t^{-1}K$. This gives $E\neq\emptyset$. Now, towards a contradiction, we assume $t_0:=\sup E<1$.

We claim that there is a point $x^0\in\overline{D^*}\setminus K$ such that $v_{t_0}(x^0)=v^*(x^0)$. Assume, towards a contradiction, that  $v_{t_0}>v^*$ in $\overline{D^*}\setminus K$. Then $v_{t_0}>v^*$ in a smaller compact set $\overline{D^*}\setminus (t_0^{-1}K)^{\mathrm{o}}$. Since $v^*=0$ on $\p D^*$ and $v_{t_0}=1$ in $t_0^{-1}K$, by continuity we can find $t_1\in(t_0,1)$ slightly larger than $t_0$ such that $v_{t_1}\ge v^*$ in $\overline{D^*}\setminus (t_1^{-1}K)^{\mathrm{o}}$. As $v^*\le 1$ in $\R^n$ and $v_{t_1}=1$ in $t_1^{-1}K$, we further have $v_{t_1}\ge v^*$ in $\overline{D^*}$, which contradicts the definition of $t_0$.

Now we divide the  proof into two cases \begin{align*}
    &\mathbf{A}: \quad  x^0\in(D^*\cap D_{t_0})\setminus K\quad(\text{or }v^*(x^0)=v_{t_0}(x^0)>0),\\
    &\mathbf{B}:\quad  x^0\in\p D^*\cap\p D_{t_0}\quad(\text{or }v^*(x^0)=v_{t_0}(x^0)=0).
\end{align*}
In Case $\mathbf{A}$, we have $h(v^*(x^0))=h(v_{t_0}(x^0))>t_0^2h(v_{t_0}(x^0))>0$, and by continuity, $h(v^*)>t_0^2h(v_{t_0})$ in a neighborhood of $x^0$, say in $B_\rho(x^0)$. This gives $F(D^2v^*)\ge h(v^*)>t_0^2h(v_{t_0})=F(D^2v_{t_0})$ there. Therefore, $\cM^-_{\la,\Lambda}(D^2(v_{t_0}-v^*))\le F(D^2v_{t_0})-F(D^2v^*)<0$ in $B_\rho(x^0)$, and  (by the definition of $t_0$)  $v_{t_0}-v^*\ge 0$ in $B_\rho(x^0)$. Since $v_{t_0}-v^*$ attains a local minimum at $x^0$, the strong minimium principle implies $v_{t_0}-v^*\equiv0$ in $B_\rho(x^0)$. This contradicts $\cM^-_{\la,\Lambda}(D^2(v_{t_0}-v^*))<0$.

Next, we consider the case $\mathbf{B}$, i.e.,  $x^0\in \p D^*\cap \p D_{t_0}$. Note that $D^*\subset D_{t_0}$ and $v_{t_0}\ge v^*$ in $D^*$. By continuity, $v^*\le v_{t_0}\le \e_1$ in $B_\rho(x^0)\cap D^*$ for some small $\rho>0$, where $\e_1$ is as in \eqref{eq:assump-h}. Since $h(v^*)=h(v_{t_0})=1$ in $B_\rho(x^0)\cap D^*$, we can proceed as in case $\mathbf A$ to get $\cM^-_{\la,\Lambda}(D^2(v_{t_0}-v^*))<0$ in $B_\rho(x^0)\cap D^*$. This implies $v_{t_0}>v^*$ in $B_\rho(x^0)\cap D^*$, otherwise $v_{t_0}-v^*$ has a local minimum and we can argue as in case $\mathbf A$ to reach a contradiction.

We claim that for any small $\eta>0$, we can construct a cone ${\mathcal C}_{x^0}\subset \R^n$ with vertex $x^0$ and its Lipschitz norm less than $\eta$ such that ${\mathcal C}_{x^0}\cap B_\rho(x^0)\subset D^*$ for some $\rho>0$. Indeed, the point $x^0\in \p D^*$ can be written as a convex combination of $x^1,\cdots, x^k$, with $k\le n$, such that $x^i\in \p D$, $1\le i\le k$, and there exists a hyperplane $\Pi$ supporting $\p D^*$ at $x^0$ and $\p D$ at $x^i$, $1\le i\le k$, see e.g., \cite{BiaLonSal09}. Without loss of generality, we may assume $e_n=(0,\cdots,0,1)$ is normal to $\Pi$ and points towards $\p D^*$ at $x^0$ and $\p D$ at $x^i$, $1\le i\le k$. Note that from \eqref{eq:assump-h}, near each point $x^i$, the function $v$ can be seen as a solution of \begin{align*}
    F(D^2v)=\chi_{\{v>0\}}\,\,\text{ and }\,\, v\ge 0\quad \text{in }D\cap B_r(x^i).
\end{align*}
By the result of \cite{Lee98}, the free boundary $\p D$ is $C^1$ near the point $x^i$, thus we can construct a cone ${\mathcal C}_{x^i}$, with direction $e_n$ and its Lipschitz norm less than $\eta$, such that ${\mathcal C}_{x^i}\cap B_\rho(x^i)\subset D\subset D^*$ for small $\rho>0$. Since $D^*$ is convex and $x^0$ is a convex combination of $x^1,\cdots,x^k$, we can find the desired cone ${\mathcal C}_{x^0}$ satisfying ${\mathcal C}_{x^0}\cap B_\rho(x^0)\subset D^*$.

Now we let $w^\star$ be a solution of
\begin{align*}
    \begin{cases}
    \cM^-_{\la,\Lambda}(D^2w^\star)=0&\text{in }{\mathcal C}_{x^0}\cap B_{\rho}(x^0),\\
    w^\star=0&\text{on }\p {\mathcal C}_{x^0}\cap B_{\rho}(x^0),\\
    w^\star=v_{t_0}-v^*&\text{on }{\mathcal C}_{x^0}\cap \p B_{\rho}(x^0).
    \end{cases}
\end{align*}
Then the comparison principle yields $w^\star\le v_{t_0}-v^*$ in ${\mathcal C}_{x^0}\cap B_\rho(x^0)$. Thus, for any fixed $0<\al<1$, $\sup_{B_r(x^0)\cap \mathcal C_{x^0}}(v_{t_0}-v^*)\ge \sup_{B_r(x^0)\cap \mathcal C_{x^0}}w^\star\ge cr^{1+\al}$ for small $r>0$ by Lemma~3.5 in \cite{AllKriSha20}.

On the other hand, since $F(D^2v_{t_0}(x))=t_0^2F(D^2v(t_0x))=t_0^2h(v(t_0x))$ in $B_\rho(x^0)\cap D_{t_0}$ and $h$ is bounded, $v_{t_0}$ is $C^{1,\gamma}$ for all $\al<\gamma<1$, thus $\sup_{B_r(x^0)\cap \mathcal C_{x^0}}(v_{t_0}-v^*)\le \sup_{B_r(x^0)\cap \mathcal C_{x^0}}v_{t_0}\le Cr^{1+\gamma}$ for small $r>0$. This is a contradiction.
\end{proof}

Using the preceding lemma and the regularity result in Appendix~\ref{appen:grad-holder}, we prove Theorem~\ref{thm:quasi-concave} for the case the operator $L$ is the fully nonlinear operator $F$.

\begin{proof}[Proof of Theorem~\ref{thm:quasi-concave} for the fully nonlinear case]

\emph{Step 1.} For each large $i\in\N$ (such that $(1/2)^i<\e_1$), we consider a function $g^i:\R\to\R$ such that 
\begin{align}
    \label{eq:assump-f-approx}
    \begin{cases}
    g^i=0\text{ on }(-\infty,0],\\
    g^i\text{ is strictly positive, bounded and Lipschitz continuous on }(0,\infty),\\
    g^i(t)=1,\quad 0<t\le (1/2)^i,\\
    g^i\le g+2\text{ on $(0,\infty)$},\\
    g^i\to g \text{ uniformly on every compact subset of $(0,\infty)$}.
    \end{cases}
\end{align}
For each regularized problem of \eqref{eq:sol} \begin{align*}
    \begin{cases}
    F(D^2u^i)=g^i(u^i)&\text{in }\R^n\setminus K,\\
    u^i=1&\text{on }\p K,
    \end{cases}
\end{align*}
let $u^i$ be the solution constructed in Lemma~\ref{lem:quasi-concave-const}, which is compactly-supported, nonnegative and quasi-concave.

By Theorem~\ref{thm:grad-holder-F} in Appendix~\ref{appen:grad-holder}, for every bounded open set $D\Subset \R^n\setminus K$, $u^i$ is uniformly $C^{1,\be-1}_{\loc}(\overline{D})$ for large $i$ with
\begin{align}
    \label{eq:approx-unif-opt-reg}
    \sup_{B_r(x)\cap D}u^i\le C\left(r^\be+u^i(x)\right)\quad\text{for any }x\in D,
\end{align}
for some constant $C>0$, independent of $i$. This, combined with $0\le u^i\le 1$ in $\R^n\setminus K$, implies that over a subsequence $u:=\lim_{i\to\infty} u^i$
 exists in $\R^n\setminus K$, and $u$ is quasi-concave and satisfies $F(D^2u)=g(u)$ in $\{u>0\}$ and $|\D u|=0$ on $\p\{u>0\}$.

\medskip\noindent\emph{Step 2.} To prove $u=1$ on $\p K$, we let $w_\sharp$ be as in \eqref{eq:subsol}. In view of the end of \emph{Step 1} in the proof of Lemma~\ref{lem:quasi-concave-const}, we have $w_\sharp\le u^i$ in $\R^n\setminus K$. This holds for every $i$ since in the construction of $w_\sharp$ in \eqref{eq:subsol} two constants $\tau$ and $\rho$ depend only on $a$, $\la$, $\Lambda$, $K$. Thus we get $w_\sharp\le u$ in $\R^n\setminus K$. This, along with $u\le 1$ in $\R^n\setminus K$ and $w_\sharp=1$ on $\p K$, implies $u=1$ on $\p K$.

\medskip\noindent\emph{Step 3.} It remains to show $\Omega=\{u>0\} \subset\R^n\setminus K$ is bounded. Towards a contradiction we assume $\Omega$ is unbounded. If the component of $\Omega$ enclosing $K$ is bounded, then there is nothing to prove. Thus we may assume it is unbounded, and for simplicity further assume $\Omega$ is an unbounded connected set. We observe that the solution $u$ constructed above satisfies $u \leq 1$, and thus $F(D^2u)=g(u)\ge c_1u^b$ in $\Omega$. Now we use the idea in Lemma~4.1 in \cite{FotSha17} again to prove the non-degeneracy of $u$. Let $w:=u^{1-b}$ in $\Omega$, and compute $$
D^2w=(1-b)u^{-b}D^2u-\frac{b}{1-b}\cdot\frac{\D w\otimes\D w}{w}.
$$
Then it follows that $$
F(D^2w)\ge F((1-b)u^{-b}D^2u)+\cM_{\la,\Lambda}^-\left(\frac{b}{1-b}\cdot\frac{\D w\otimes\D w}{w}\right)\ge c_1(1-b)-\frac{b\Lambda}{1-b}\frac{|\D w|^2}w.
$$
For a point $x^0\in \Omega$ we set $h_{x^0}(x):=w(x)-\tilde c|x-x^0|^2$ for a small constant $\tilde c>0$, independent of $x^0$, to be determined later. Then
\begin{align*}
    \cM^+_{\la,\Lambda}(D^2h_{x^0})&\ge F(D^2w)-F(D^2(\tilde c|x-x^0|^2))\ge F(D^2w)-cM^{+}_{\la,\Lambda}(D^2(\tilde c|x-x^0|^2))\\
    &\ge c_1(1-b)-\left(\frac{b\Lambda}{1-b}\right)\frac{|\D w|^2}{w}-2\tilde cn\Lambda.
\end{align*}
This yields that in $\Omega=\{u>0\}=\{w>0\}$
\begin{align*}
    \cM^+_{\la,\Lambda}(D^2h_{x^0})+\frac{b\Lambda}{1-b}\left[\frac{\D(w+\tilde c|x-x^0|^2)}{w}\D h_{x^0}-\frac{4c}{w}h_{x^0}\right]&\ge c_1(1-b)-2\tilde c\Lambda\left(n+\frac{2b}{1-b}\right) \ge0,
\end{align*}
provided $\tilde c$ is small enough. Now, for every $R>0$, we choose a point $x^0\in\Omega$, so that $B_R(x^0)\subset\R^n\setminus K$. Note that $\frac{\D(w+\tilde c|x-x^0|^2)}w$ and $\frac{4c}w$ may not be bounded in $\Omega\cap B_R(x^0)$, which does not allow us to apply the maximum principle. To rectify this, we consider a subset $\{x\in\Omega:w(x)>\de\}$ of $\Omega$, with a constant $\delta\in(0,1)$, where the above two quotients are bounded. 
If $\delta>0$ is small enough, then the connected component of $\{w>\delta\}$ containing $x^0$, say $A$, satisfies $A\cap \p B_R(x^0)\neq\emptyset$. For the sake of simplicity we assume $A=\{w>\delta\}$, i.e., $\{w>\delta\}$ is connected. Now, if $\delta<\frac12w(x^0)$, then we have $h_{x^0}(x^0)=w(x^0)$ and $h_{x^0}=\de-\tilde c|x-x^0|^2<\frac12w(x^0)$ on $\p\{w>\de\}\cap B_R(x^0)$, thus $\sup_{\{w>\de\}\cap \p B_R(x^0)}h_{x^0}\ge w(x^0)>0$ by the maximum principle applied in $\{w>\de\}\cap B_R(x^0)$. 
This implies $\sup_{\Omega\cap B_R(x^0)}w\ge \sup_{\{w>\de\}\cap \p B_R(x^0)}w\ge \tilde cR^2$ for any $R>0$, which contradicts $w=u^{1-b}\le 1$.
\end{proof}

%%%%%%%%%%%%%%%%%%%%%%%%%%%%%%%%%%%%%%%%%%%%%%%%%%%%%%

\subsection{$p$-Laplacian case}\label{subsec:p-lap}
In this subsection we prove Theorem~\ref{thm:quasi-concave}, for the case  when $L$ is the $p$-Laplacian $\Delta_pu$. Similar to the proof of  fully nonlinear version, we first consider (in the following lemma)
 the problem with $g$ replaced by $h$ (see \eqref{eq:assump-h}), and use it to prove Theorem~\ref{thm:quasi-concave}.

\begin{lemma}
\label{lem:quasi-concave-const-p}
Let $K\in\cA$, and let $H$ be as in \eqref{eq:H}, i.e. $H' = h$.
 Then there exists a minimizer of $$
J_{H}(w,\R^n\setminus K):=\int_{\R^n\setminus K}\left(|\D w|^p+pH(w)\right)
$$
over $\cK_K:=\{w\in W^{1,p}(\R^n\setminus K):w=1\text{ on }\p K\}$, that is compactly-supported, nonnegative and quasi-concave.
\end{lemma}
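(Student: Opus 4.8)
The strategy mirrors the fully nonlinear case in Lemma~\ref{lem:quasi-concave-const}, but replaces the viscosity-solution machinery by the direct method of the calculus of variations together with Schwarz-type (quasi-concave) rearrangement estimates. First I would reduce to a bounded domain: fix a large ball $B_{R_0}\Supset K$ (the radius to be chosen below, independent of the approximation parameter), and minimize $J_H(\cdot,B_{2R_0}\setminus K)$ over $\cK^{R_0}:=\{w\in W^{1,p}(B_{2R_0}\setminus K): w=1\text{ on }\p K,\ w=0\text{ on }\p B_{2R_0}\}$. Since $H\ge 0$ is nondecreasing and $p>1$, $J_H$ is coercive and (sequentially weakly) lower semicontinuous on $W^{1,p}$, so a minimizer $v_{R_0}$ exists by the direct method; truncating at $0$ and $1$ only decreases the energy (because $H$ is nondecreasing and $H=0$ on $(-\infty,0]$), hence $0\le v_{R_0}\le 1$. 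The Euler--Lagrange equation is $\Delta_p v_{R_0}=h(v_{R_0})$ weakly in $B_{2R_0}\setminus K$, and a barrier argument exactly as in \emph{Step 1} of Lemma~\ref{lem:quasi-concave-const} — using the same subsolution $w_\sharp$ from \eqref{eq:subsol} (with $\be=\tfrac2{1-a}$, now verifying $\Delta_p w_\sharp\ge h(w_\sharp)$, which is where the hypothesis $0<b<\min\{1,p-1\}$ enters so that the $p$-Laplacian of the power barrier has the right sign and growth) and a radial supersolution of the form $\min\{1,c(R_0-|x|)^\gamma_+\}$ — shows $w_\sharp\le v_{R_0}\le 1$, so $v_{R_0}=1$ on $\p K$ and, for $R_0$ large enough independently of the later approximation, $v_{R_0}$ is compactly supported strictly inside $B_{2R_0}$. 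Letting $R_0\to\infty$ and using the uniform bound and local $C^{1,\al}$ estimates from Appendix~\ref{appen:grad-holder}, one extracts a limit $v$ that minimizes $J_H(\cdot,\R^n\setminus K)$ over $\cK_K$, is nonnegative, bounded, compactly supported, and equals $1$ on $\p K$.

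\textbf{Quasi-concavity.} For the convexity I would argue by rearrangement rather than by the Lavrentiev/quasi-concave-envelope comparison used in the fully nonlinear case, since here we have a variational structure: let $v^*$ be the quasi-concave envelope of $v$, so $v^*\ge v$, $v^*$ has convex super-level sets, and $v^*=1$ on (a neighborhood of) $K$. The key point is that passing from $v$ to $v^*$ does not increase $J_H$: the potential term satisfies $\int H(v^*)\le \int H(v)$ because $H$ is nondecreasing and $\{v^*>t\}\supseteq\{v>t\}$ for every $t$ — wait, that inequality goes the wrong way, so instead I would run the comparison directly: since $v^*$ is shown (by Proposition~\ref{prop:v*-subsol}, the analogue of the fully nonlinear subsolution property, now for $\Delta_p$) to be a weak subsolution of $\Delta_p v^*=h(v^*)$ with $v^*\ge v$ and $v^*=v$ outside a bounded set, one applies the Lavrentiev Principle verbatim as in \emph{Step 2} of Lemma~\ref{lem:quasi-concave-const}: with $D^*=\{v^*>0\}$, $v_t(x):=v(tx)$, $D_t=\{v_t>0\}$, and $E=\{0<t<1: v_t\ge v^*\text{ on }D^*\}$, one shows $E\ne\emptyset$, sets $t_0=\sup E$, assumes $t_0<1$, finds a touching point $x^0\in\overline{D^*}\setminus K$ with $v_{t_0}(x^0)=v^*(x^0)$, and derives a contradiction in the two cases (interior touching, where strict comparison $h(v^*)>t_0^p h(v_{t_0})$ plus the strong maximum principle for $\Delta_p$ applies; and free-boundary touching, where one uses $h\equiv 1$ near $0$ together with the cone/non-degeneracy estimate — the $C^1$ regularity of $\p\{v>0\}$ at the contributing points $x^i$ coming from the obstacle-problem structure $\Delta_p v=\chi_{\{v>0\}}$ and the corresponding $p$-Laplacian regularity theory). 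This yields $v^*\equiv v$, hence $v$ is quasi-concave.

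\textbf{Main obstacle.} I expect the genuinely delicate step to be the free-boundary touching case $\mathbf B$: one needs (i) the decomposition of a boundary point $x^0$ of the convex hull as a convex combination of $\le n$ free boundary points $x^i$ of $\{v>0\}$ lying on a common supporting hyperplane, (ii) $C^1$-regularity of the free boundary of $v$ near each $x^i$ — which for the $p$-Laplacian obstacle-type problem requires invoking the appropriate regularity theory rather than the classical \cite{Lee98} result used in the fully nonlinear case — in order to build a thin cone $\cC_{x^0}\subset D^*$ at $x^0$, and (iii) matching a lower growth bound $\sup_{B_r(x^0)\cap\cC_{x^0}}(v_{t_0}-v^*)\ge cr^{1+\al}$ coming from comparison with a $p$-harmonic function in the cone against the upper bound $\le Cr^{1+\gamma}$ from the $C^{1,\gamma}$ regularity of $v_{t_0}$, which forces a contradiction once $\gamma>\al$. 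The scaling $F(D^2v_{t_0})=t_0^2(\cdots)$ used in the fully nonlinear proof is replaced by $\Delta_p v_t(x)=t^p(\Delta_p v)(tx)=t^p h(v(tx))$, so the factor is $t_0^p$ rather than $t_0^2$; apart from this bookkeeping, the argument is structurally identical. Boundedness of $\Omega=\{v>0\}$ is automatic here since we constructed $v$ with compact support inside $B_{2R_0}$.
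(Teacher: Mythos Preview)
Your existence strategy (direct minimization on $B_{2R_0}\setminus K$, truncation, barriers, and then $R_0\to\infty$) is reasonable, but the quasi-concavity argument has a genuine gap. You propose to run the Lavrentiev Principle \emph{directly for the $p$-Laplacian}, invoking an analogue of Proposition~\ref{prop:v*-subsol} for $\Delta_p$. The paper does \emph{not} do this, and for good reason: the degeneracy of $\Delta_p$ at points where the gradient vanishes obstructs every one of the three key steps you identified. Specifically: (i) Proposition~\ref{prop:v*-subsol-p} in the paper is stated and proved only for the $\e$-regularized operator, and gives that $(v^\e)^*$ is a subsolution for the \emph{linearized, uniformly elliptic} operator $I_{v^\e}(D^2\,\cdot\,)=a^{ij}(\D v^\e)\,\partial_{ij}$, not for $\Delta_p$ itself; (ii) in Case~$\mathbf A$, the passage from $\Delta_p v^*>\Delta_p v_{t_0}$ to a strong minimum principle for $v_{t_0}-v^*$ requires writing the difference as $\ddiv(A(x)\D(v_{t_0}-v^*))$ with uniformly elliptic $A$, which fails precisely when $\D v_{t_0}(x^0)=\D v^*(x^0)=0$ --- and nothing rules that out; (iii) in Case~$\mathbf B$ the lower growth bound in the cone comes from comparison with a solution of $\cM^-_{\la,\Lambda}(D^2 w^\star)=0$, for which there is no direct $p$-Laplacian substitute at hand.

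The paper's remedy is to introduce a second regularization: for each $\e\in(0,1)$ one minimizes the nondegenerate functional
\[
J^\e_H(w,\R^n\setminus K)=\int_{\R^n\setminus K}\bigl((\e+|\D w|^2)^{p/2}-\e^{p/2}+pH(w)\bigr),
\]
whose Euler--Lagrange equation $a^{ij}(\D v^\e)\,v^\e_{x_ix_j}=h(v^\e)$ is uniformly elliptic on compact subsets (since $|\D v^\e|$ is bounded there). The frozen-coefficient operator $I_{v^\e}$ then satisfies all of \eqref{eq:assump-fully-nonlinear}, so the entire \emph{Step~2} of Lemma~\ref{lem:quasi-concave-const} --- subsolution property of the envelope, strong minimum principle in Case~$\mathbf A$, cone comparison in Case~$\mathbf B$ --- transfers verbatim to $v^\e$. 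One then passes $\e\to 0$ using the uniform $C^{1,\al}$ estimates of \cite{ChoLew91}, obtaining a quasi-concave limit $v$ which is shown to minimize $J_H$ via Fatou and a cutoff argument; compact support is established a posteriori by a separate nondegeneracy estimate for the limit (Step~3). In short, the missing idea in your proposal is this $\e$-regularization layer, which is what allows the fully nonlinear toolbox to be reused.
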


\begin{remark}
To prove Lemma~\ref{lem:quasi-concave-const-p}, we first prove the existence of a minimizer of the functional $J^\e_H(\cdot,\R^n\setminus K)$, $0<\e<1$, as in \eqref{eq:energy-ep} with desired properties. In \eqref{eq:energy-ep}, the extra term $-\e^{p/2}$ is needed, as $\left(\e+|\D w|^2\right)^{p/2}+pH(w)\ge \e^{p/2}$ and the integrand is taken in $\R^n\setminus K$ with infinite measure. An alternative way of removing $-\e^{p/2}$ is taking the integral in $B_{R_0}$ with large $R_0>0$, as we will prove the non-degeneracy and the boundedness of the support in Step 3 in the lemma below.
\end{remark}

\begin{proof}
\emph{Step 1.} 
For small $\e\in(0,1)$, we minimize  the  energy functional
\begin{align}
\label{eq:energy-ep}
J^\e_{H}(w,\R^n\setminus K):=\int_{\R^n\setminus K}\left(\left(\e+|\D w|^2\right)^{p/2}-\e^{p/2}+pH(w)\right),
\end{align}
over   $\cK_K$. Note that $J^\e_H(w,\R^n\setminus K)<\infty$ whenever $w\in\cK_K$ has a compact support. A minimizer $v^\e$ exists due to the semi-continuity of $J^\e$, and it is a solution of 
\begin{align}
    \label{eq:sol-const-p}
    \begin{cases}
    \ddiv\left[\left(\e+|\D v^\e|^2\right)^{\frac p2-1}\D v^\e\right]=h(v^\e)&\text{in }\R^n\setminus K,\\
    v^\e=1&\text{on }\p K.
    \end{cases}
\end{align}
The first equation in \eqref{eq:sol-const-p} is equivalent to $$
a^{ij}(\D v^\e)v^\e_{x_ix_j}=h(v^\e)\text{ in }\R^n\setminus K,
$$
where $a^{ij}(z)=\left(\e+|z|^2\right)^{p/2-2}\left[(p-2)z_iz_j+\delta_{ij}\left(\e+|z|^2\right)\right]$. Note that for every $\xi\in\R^n$ and $z\in \R^n$, 
\begin{align}
    \label{eq:ellip}
    \mu_1\left(\e+|z|^2\right)^{p/2-1}|\xi|^2\le a^{ij}(z)\xi_i\xi_j\le \mu_2\left(\e+|z|^2\right)^{p/2-1}|\xi|^2,
\end{align}
where $\mu_1=\min(p-1,1)$, $\mu_2=\max(p-1,1)$ (see e.g. \cite{Lew83}).

Since $H(t)=0$ for $t\le0$, we have $J^\e_H(\max\{v^\e,0\}, \R^n\setminus K)\le J^\e_H(v^\e,\R^n\setminus K)$. Moreover, $\max\{v^\e,0\}\in \mathcal{K}_K$ and the strict inequality $J^\e_H(\max\{v^\e,0\}, \R^n\setminus K)< J^\e_H(v^\e,\R^n\setminus K)$ holds when the set $\{v^\e<0\}$ is nonempty. This implies that $v^\e\ge0$ in $\R^n\setminus K$. Similarly, we can show that $v^\e\le 1$ in $\R^n\setminus K$ as well.

%\mpar{I am little bit puzzled! Can the support have several components? But the max. principle will be violated!  It seems green text needs to be shortened, if we just say that the support is connected. This follows naturally.\\\bblue{Yes, it looks clear since $v^\e$ is the energy minimizer. Fixed!}}

Next, we claim that $v^\e$ has  compact support. Suppose this fails, and that  $\{v^\e>0\}$ is  unbounded, and hence   for each $R> 0$ we have  a point $x^0\in\{v^\e>0\}$ such that $B_R(x^0)\subset \R^n\setminus \tilde K$. 
Notice that $\{v^\e>0\}$ can have exactly one component as $v^\e$ is an energy minimizer of $J_H^\e(\cdot,\R^n\setminus K)$. That is, $\{v^\e>0\}$ is a connected unbounded set.
We then fix a compact set $\tilde K\supset K$ strictly larger than $K$. Since $0\le v^\e\le 1$ in $\R^n\setminus K$, Lemma~1 in \cite{ChoLew91} gives $\sup_{\R^n\setminus \tilde K}|\D v^\e|<\infty$. Thus, in view of \eqref{eq:ellip}, the linear operator $
I_{v^\e}$ defined by $
I_{v^\e}((b_{ij})_{n\times n})=a^{ij}(\D v^\e)b_{ij}$ is uniformly elliptic in $\R^n\setminus\tilde K$. 
%Now, for  $R>0$, we can take a point $x^0\in\{v^\e>0\}$ such that $B_R(x^0)\subset \R^n\setminus \tilde K$, and let 
$$
\tilde v^\e(x):=v^\e(x)-v^\e(x^0)-\delta|x-x^0|^2,\quad x\in B_R(x^0),
$$
for a constant $\delta>0$, independent of $R$, to be determined later. From the assumption \eqref{eq:assump-h} on $h$, we can infer that $h(v^\e)\ge c_2\chi_{\{v^\e>0\}}$ for some $c_2>0$. Then we have in $\{v^\e>0\}\cap B_R(x^0)$ \begin{align*}
    I_{v^\e}(D^2\tilde v^\e)=I_{v^\e}(D^2v^\e)-I_{v^\e}(D^2(\delta|x-x^0|^2))\ge c_2-\delta \sum_{i=1}^na^{ii}(\D v^\e)>0,
\end{align*}
where the last inequality follows provided $\delta>0$ is small enough, independent of $R$. This is possible since $|\D v^\e|$ is bounded in $\R^n\setminus\tilde K$. The maximum principle tells us $\sup_{\p(\{v^\e>0\}\cap B_R(x^0))}\tilde v^\e\ge \tilde v^\e(x^0)=0$. On $B_R(x^0)\cap\p\{v^\e>0\}$, we have $\tilde v^\e=-v^\e(x^0)-\delta|x-x^0|^2<0$, thus $\sup_{\{v^\e>0\}\cap \p B_R(x^0)}\tilde v^\e\ge0$, which in turn gives $\sup_{\{v^\e>0\}\cap \p B_R(x^0)}v^\e\ge \delta R^2$. This implies that  $R$, chosen as above, is universally bounded, and hence so is the set  $\{v^\e>0\}$.

% arbitrary, this non-degeneracy property of $v^\e$ contradicts $0\le v^\e\le 1$. Therefore, $v^\e$ has a compact support.

Next, we prove that $v^\e$ is quasi-concave. Note that $I_{v^\e}(0)=0$ and $I_{v^\e}$ is concave, homogeneous of degree $1$, and uniformly elliptic in $\R^n$ away from $K$. Moreover,  the quasi-concave envelope of $v^\e$, denoted by  $(v^\e)^*$,
satisfies  (see Proposition~\ref{prop:v*-subsol-p} in Appendix~\ref{appen:quasi-concave-subsol}) \begin{align}
    \label{eq:const-p-2nd-subsol}
    \begin{cases}
    I_{v^\e}(D^2(v^\e)^*) \geq h((v^\e)^*)&\text{in }\R^n\setminus K,\\
    (v^\e)^*=1&\text{on }\p K.
    \end{cases}
\end{align}
With these properties of $v^\e$ and $(v^\e)^*$ at hand, the quasi-concavity of $v^\e$ can be obtained by repeating the argument in \emph{Step 2} in the proof of Lemma~\ref{lem:quasi-concave-const}.

\medskip\noindent \emph{Step 2.} 
We claim that a limit function of $v^\e$ (over a subsequence) is the  desired function in Lemma~\ref{lem:quasi-concave-const-p}. Indeed, by the result of \cite{ChoLew91} and  for some $0<\al<1$, 
$\{v^\e\}_{0<\e<1}$ are uniformly $C^{1,\al}$ in every compact subset of $\R^n\setminus K$. Thus, there exists a function $v$ such that over a subsequence $v^\e$, $\D v^\e$ converge uniformly to $v$, $\D v$, respectively, on compact subsets of $\R^n\setminus K$. Clearly, $v$ is nonnegative and quasi-concave in $\R^n\setminus K$.

To show that $v$ is a minimizer of $J_{H}(w,\R^n\setminus K)=\int_{\R^n\setminus K}\left(|\D w|^p+pH(w)\right)$ in $\cK_K$, let $w\in \cK_K$ be given. If $w$ has a compact support, then we have for every $R>0$ large 
\begin{align}
    \label{eq:energy-min}
    \begin{split}
    J_H(v,B_R\setminus K)&\le\liminf_{\e\to0}J^\e_H(v^\e,B_R\setminus K)\le \liminf_{\e\to0}J^\e_H(v^\e,\R^n\setminus K)\\
    &\le\liminf_{\e\to0}J^\e_H(w,\R^n\setminus K)=J_H(w,\R^n\setminus K).
\end{split}\end{align}
Here, in the first inequality we applied Fatou's lemma and in the last step we used $\left(\e+|\D w|^2\right)^{p/2}-\e^{p/2}+pH(w)\le \left[\left(1+|\D w|^2\right)^{p/2}+pH(w)\right]\chi_{\text{\{supp} (w)\}}$ and applied the dominated convergence theorem. Taking $R\to\infty$ in \eqref{eq:energy-min} and applying the monotone convergence theorem give $J_H(v,\R^n\setminus K)\le J_H(w,\R^n\setminus K)$. In addition, for each $0<\e<1$, we have $0\le v^\e\le 1$ and 
\begin{align*}
    \int_{B_R\setminus K}|\D v^\e|^p&\le J^\e_H(v^\e,B_R\setminus K)+\int_{B_R\setminus K}\e^{p/2}\le J^\e_H(w,B_R\setminus K)+\int_{B_R\setminus K}\e^{p/2}\\
    &\le \int_{B_R\setminus K}\left(\left(1+|\D w|^2\right)^{p/2}+1\right),
\end{align*}
thus $v^\e$ is uniformly bounded in $W^{1,p}(B_R\setminus K)$. This implies up to a subsequence $v^\e\to v$ weakly in $W^{1,p}(B_R\setminus K)$, and thus $v=1$ on $\p K$. Therefore, $v\in \mathcal{K}_K$.

We now consider the general case $w\in \cK_K$ without the compact support assumption. For $m\in\N$, let $\phi_m:\R^n\to[0,1]$ be a cut-off function defined by \begin{align}\label{eq:cut-off-ftn}
    \phi_m(x):=\begin{cases}
    1,&|x|\le m,\\
    m+1-|x|,&m<|x|<m+1,\\
    0,&|x|\ge m+1.
    \end{cases}
\end{align}
Then $w_m:=w\phi_m\in\mathcal{K}_K$ has a compact support and satisfies
\begin{align*}
    \int_{\R^n\setminus K}|\D w_m|^p&=\int_{B_{m+1}\setminus K}|\phi_m\D w+w\D \phi_m|^p\le \int_{B_m\setminus K}|\D w|^p+C(p)\int_{B_{m+1}\setminus B_m}\left(|\D w|^p+|w|^p\right)\\
    &\le \int_{\R^n\setminus K}|\D w|^p+C(p)\int_{B_{m+1}\setminus B_m}\left(|\D w|^p+|w|^p\right).
\end{align*}
Moreover, as $w_m\le w$ and $H$ is nondecreasing, $H(w_m)\le H(w)$. Thus, $$
J_H(v,\R^n\setminus K)\le J_H(w_m,\R^n\setminus K)\le J_H(w,\R^n\setminus K)+C(p)\int_{B_{m+1}\setminus B_m}\left(|\D w|^p+|w|^p\right).
$$
Taking $m\to\infty$ and using $w\in W^{1,p}(\R^n\setminus K)$, we obtain $J_H(v,\R^n\setminus K)\le J_{H}(w,\R^n\setminus K)$, and conclude that $v$ is an energy minimizer of $J_H(\cdot,\R^n\setminus K)$.

\medskip\noindent \emph{Step 3.} We claim that $v$ is compactly supported.
As before, we assume to the contrary $\{v>0\}$ is an unbounded connected set.
 Note that as $0\le v\le1$, $\Delta_pv=h(v)\ge c_1v^b$ in $\{v>0\}$. Now we will use the idea in Lemma~4.1 in \cite{FotSha17} to prove the following version of non-degeneracy property of $v$: there exists a constant $c_0>0$, depending only on $n$, $p$, $b$, $c_1$, such that for any $R>0$ \begin{align}
    \label{eq:nondeg-p}
    \sup_{B_R(x^0)\cap \{v>0\}}v\ge c_0R^{\frac{p}{p-1-b}}\quad\text{for some }x^0\in\R^n\setminus K.
\end{align}
To prove $\eqref{eq:nondeg-p}$, for $\al:=1-\frac{b}{p-1}\in(0,1)$ let $\bar v:=v^\al$. For a point $x^0\in\{v>0\}$, set $q_{x^0}(x):=\bar c|x-x^0|^{\frac p{p-1}}$ with a small constant $\bar c>0$, depending only on $n$, $p$, $b$, $c_1$, to be determined later. Using $\Delta_pv\ge c_1v^b$ in $\{v>0\}$ and $|\D \bar v|=\al v^{\al-1}|\D v|$, we can compute \begin{align*}
    \Delta_p\bar v&=\al^{p-1}v^{(\al-1)(p-1)}\Delta_pv+\al^{p-1}(\al-1)(p-1)v^{(\al-1)(p-1)-1}|\D v|^p\\
    &\ge \al^{p-1}c_1+\frac{(\al-1)(p-1)}{\al\bar v}|\D\bar v|^p.
\end{align*}
From $\Delta_pq_{x^0}=\bar c n\left(\frac{p}{p-1}\right)^{p-1}$ and $|\D q_{x^0}|^p=\bar c^{p-1}\left(\frac{p}{p-1}\right)^pq_{x^0}$, we further have \begin{align*}
    \Delta_p\bar v-\Delta_pq_{x^0}&\ge\al^{p-1}c_1-\bar cn\left(\frac{p}{p-1}\right)^{p-1}+\frac{(\al-1)(p-1)}{\al\bar v}|\D\bar v|^p\\
    &=\al^{p-1}c_1-\bar cn\left(\frac{p}{p-1}\right)^{p-1}+\frac{(\al-1)(p-1)}{\al\bar v}\left(|\D\bar v|^p-|\D q_{x^0}|^p\right)\\
    &\qquad+\frac{(\al-1)(p-1)\bar c^{p-1}\left(\frac{p}{p-1}\right)^p}{\al\bar v}q_{x^0}\\
    &=\al^{p-1}c_1-\bar cn\left(\frac{p}{p-1}\right)^{p-1}+\frac{(\al-1)(p-1)}{\al\bar v}\left(|\D\bar v|^p-|\D q_{x^0}|^p\right)\\
    &\qquad+\frac{(\al-1)(p-1)\bar c^{p-1}\left(\frac{p}{p-1}\right)^p}{\al\bar v}(q_{x^0}-\bar v)+\frac{(\al-1)(p-1)\bar c^{p-1}\left(\frac{p}{p-1}\right)^p}{\al}.
\end{align*}
Since the function $t\mapsto t^p$ is convex in $[0,\infty)$, we have $|\D\bar v|^p-|\D q_{x^0}|^p\le p|\D\bar v|^{p-1}|\D(\bar v-q_{x^0})|$. Now, letting $h_{x^0}:=\bar v-q_{x^0}$, we see that
\begin{align}
\label{eq:p-Lapl-diff}
\begin{split}
    &\Delta_p\bar v-\Delta_pq_{x^0}+\frac{(1-\al)(p-1)p|\D\bar v|^{p-1}|^{p-1}}{\al\bar v}|\D h_{x^0}|-\frac{(1-\al)(p-1)\bar c^{p-1}\left(\frac{p}{p-1}\right)^p}{\al\bar v}h_{x^0}\\
    &\qquad \ge\al^{p-1}c_1-\bar cn\left(\frac P{p-1}\right)^{p-1}-\frac{(1-\al)(p-1)\bar c^{p-1}\left(\frac{p}{p-1}\right)^p}{\al}>0,
\end{split}\end{align}
where the last step follows if $\bar c$ is small enough. Note that $\Delta_p\bar v-\Delta_pq_{x^0}$ can be written as (see e.g. the proof of Lemma~4.7 in \cite{AllKriSha20}) $$
\Delta_p\bar v-\Delta_pq_{x^0}=\ddiv(A(x)\D h_{x^0}),
$$
where the matrix $A(x)=(a_{ij}(x))_{n\times n}$ is given by $$
a_{ij}(x)=\int_0^1|\D\bar v(x)t+\D q_{x^0}(x)(1-t)|^{p-2}m_{ij}^t\,dt,
$$
with $$
m_{ij}^t=\delta_{ij}+(p-2)\frac{(\p_i\bar vt+\p_iq_{x^0}(1-t))(\p_j\bar vt+\p_jq_{x^0}(1-t))}{|\D\bar v(x)t+\D q_{x^0}(x)(1-t)|^2}.
$$
It is also shown in \cite{AllKriSha20} that for some constant $\mu=\mu(p)>0$ $$
\mu^{-1}a(x)|\xi|^2\le a_{ij}(x)\xi_i\xi_j\le\mu a(x)|\xi|^2\quad\text{for any }\xi\in\R^n,
$$
where $$
a(x)=\int_0^1|\D\bar v(x)t+\D q_{x^0}(x)(1-t)|^{p-2}\,dt.
$$
To prove \eqref{eq:nondeg-p}, let $R>0$ be given, and as before let  $x^0\in\{v>0\}$ such that $B_R(x^0)\subset \R^n\setminus K$ and $\D v(x^0)\neq0$. 
This is possible since $\{v>0\}$ is unbounded and, for $\Delta_pv\ge c_1v^b>0$, $v$ cannot be a constant function in any open set in $\{v>0\}$. Moreover, as $\D q_{x^0}\neq 0$ in $\R^n\setminus \{x^0\}$, we infer $|\D\bar v|+|\D q_{x^0}|>0$ in $\{v>0\}$. 
Next, as we did in \emph{Step 3} in Theorem~\ref{thm:quasi-concave} for the fully nonlinear case, we can assume that there is a small constant $\delta>0$ with connected $\{\bar v>\delta\}$ such that $\delta<\frac12\bar v(x^0)$ and $\{\bar v>\de\}\cap\p B_R(x^0)\neq\emptyset$.
We then have by \eqref{eq:p-Lapl-diff} 
$$
div(A\D h_{x^0})+\frac{(1-\al)(p-1)p|\D\bar v|^{p-1}|^{p-1}}{\al\bar v}|\D h_{x^0}|-\frac{(1-\al)(p-1)\bar c^{p-1}\left(\frac{p}{p-1}\right)^p}{\al\bar v}h_{x^0}>0
$$
and the two quotients $\frac{(1-\al)(p-1)p|\D\bar v|^{p-1}|^{p-1}}{\al\bar v}$ and $\frac{(1-\al)(p-1)\bar c^{p-1}\left(\frac{p}{p-1}\right)^p}{\al\bar v}$ are uniformly bounded in $\{\bar v>\delta\}\cap B_R(x^0)$. Furthermore, as $|\D\bar v|+|\D q_{x^0}|$ is uniformly bounded below and above by positive constants in $\{\bar v>\delta\}\cap B_R(x^0)$, so is $a(x)$ there, and thus $A=(a_{ij})_{n\times n}$ is uniformly elliptic in $\{\bar v>\delta\}\cap B_R(x^0)$. Hence, we can apply the maximum principle to obtain $\sup_{\p(\{\bar v>\delta\}\cap B_R(x^0))}h_{x^0}\ge h_{x^0}(x^0)=\bar v(x^0)$. Since $h_{x^0}\le\bar v=\delta<\frac12\bar v(x^0)$ on $B_R(x^0)\cap\p\{\bar v>\delta\}$, we infer that $\sup_{\{\bar v>\delta\}\cap\p B_R(x^0)}h_{x^0}\ge \bar v(x^0)>0$. As $q_{x^0}=\bar c R^{\frac{p}{p-1}}$ on $\p B_R(x^0)$, we further have $\sup_{\{v>0\}\cap B_R(x^0)}\bar v\ge\sup_{\{\bar v>\delta\}\cap\p B_R(x^0)}\bar v\ge \bar cR^{\frac{p}{p-1}}$. This gives \eqref{eq:nondeg-p}, and contradicts $0\le v\le 1$ in $\R^n\setminus K$, unless $R$ can  not be chosen too  large, which is the desired conclusion.
\end{proof}

Now we prove the existence of a solution to \eqref{eq:sol} when the operator $L$ is $p$-Laplacian. In fact, for $G(t):=\int_{-\infty}^tg(s)\,ds$, we find an energy minimizer of $$
J_G(u,\R^n\setminus K):=\int_{\R^n}\left(|\D u|^p+PG(u)\right)
$$
over $\cK_K$, which is compactly-supported, nonnegative and quasi-concave. Clearly, this minimizer solves \eqref{eq:sol}.

\begin{proof}[Proof of Theorem~\ref{thm:quasi-concave} for $p$-Laplacian case]For functions $g^i$ satisfying \eqref{eq:assump-f-approx}, we define $G^i(t):=\int_{-\infty}^tg^i(s)\,ds$, $-\infty<t<\infty$. By Lemma~\ref{lem:quasi-concave-const-p}, there is a minimizer $u^i$ of $$
J_{G^i}(w,\R^n\setminus K)=\int_{\R^n\setminus K}\left(|\D w|^p+pG^i(w)\right)
$$
over $\cK_K=\{w\in W^{1,p}(\R^n\setminus K):w=1\text{ on }\p K\}$, which is compactly-supported, nonnegative and quasi-concave. We can use the regularity results in Appendix~\ref{appen:grad-holder} (Theorem~\ref{thm:grad-holder-p-geq2} for $2\le p<\infty$ and Theorem~\ref{thm:grad-holder-reg-p<2} for $1<p<2$) to see that $\{u^i\}$ are uniformly $C^{1,\al}$ in compact subsets of $\R^n\setminus K$. Thus, over a subsequence, $u^i\to u$ in $\R^n\setminus K$ for some function $u$. Clearly, $u$ is nonnegative and quasi-concave. 

To show that $u=1$ on $\p K$, we take a bounded and compactly-supported function $w$ in $\cK_K$. As $0\le g^i\le g+2$, we have
$$
\int_{\R^n\setminus K}|\D u^i|^p\le J_{G^i}(u^i,\R^n\setminus K)\le J_{G^i}(w,\R^n\setminus K)\le \int_{(\operatorname{supp}w)\setminus K}\left(|\D w|^p+p(G(w)+2w)\right)<\infty.
$$
Since $0\le u^i\le 1$ by the maximum principle, we have that for any $R>0$ $\{u^i\}$ is uniformly bounded in $W^{1,p}(B_R\setminus K)$, and infer $u=\lim_{i\to\infty}u^i=1$ on $\p K$.

By Fatou's lemma, it is easy to see that $u$ is an energy minimizer of $J_G(u,\R^n\setminus K)=\int_{\R^n\setminus K}\left(|\D u|^p+pG(u)\right)$ over $\mathcal K_K$. Thus, it satisfies $\Delta_pu=g(u)$ in $\{u>0\}\setminus K$. Moreover, the nonnegativity and the $C^{1,\al}$-regularity of $u$ in $\R^n\setminus K$ imply that $|\D u|=0$ on $\p\{u>0\}$.

Finally, the compact support of $\{u>0\}$ can be obtained in a similar way as we did in \emph{Step 3} in the proof of Lemma~\ref{lem:quasi-concave-const-p}, where the compact support of a solution $v$ of \eqref{eq:sol-const} is proved.
\end{proof}

%%%%%%%%%%%%%%%%%%%%%%%%%%%%%%%%%%%%%%%%%%%%%%%%%

\section{Proof of Theorem~\ref{thm:quasi-concave-thin}}

In this section, we prove Theorem~\ref{thm:quasi-concave-thin}. As in the previous sections, we first consider its regularized version, Lemma~\ref{lem:quasi-concave-const-thin}-\ref{lem:quasi-concave-const-p-thin}, and then use them to obtain Theorem~\ref{thm:quasi-concave-thin}.

\subsection{Fully nonlinear case}

\begin{lemma}\label{lem:quasi-concave-const-thin}
Let $K\in\tilde{\cA}$ and $h:\R\to\R$ be a function satisfying \eqref{eq:assump-h}.
Then, there exists a solution of the problem
\begin{align}
    \label{eq:sol-const-thin}
    \begin{cases}
    F(D^2v)=h(v)&\text{in }\R^n_+,\\
    v=1&\text{on }K^{\mathrm{o}},\\
    v=0&\text{on }\R^{n-1}\setminus K,
    \end{cases}
\end{align}
which is a compactly-supported, nonnegative and quasi-concave.
\end{lemma}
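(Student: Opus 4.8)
The plan is to mimic the proof of Lemma~\ref{lem:quasi-concave-const}, with $\R^n\setminus K$ replaced by the half-ball $B^+_{2R_0}$ and $K$ now sitting on the fixed hyperplane $\{x_n=0\}=\p\R^n_+$. First I would regularize $h$ by the Lipschitz functions $h^j$ of \eqref{eq:h^j} ($h^j\nearrow h$, $h^j\equiv0$ on $(-\infty,\tfrac1{2j}]$), fix $R_0>1$ large and independent of $j$ with $B'_{R_0}\supset K$, and solve, by the monotone iteration used in Lemma~\ref{lem:quasi-concave-const} and \cite{RicTei11},
\begin{align*}
\begin{cases}
F(D^2v^j)=h^j(v^j)&\text{in }B^+_{2R_0},\\
v^j=1&\text{on }K^{\mathrm o},\\
v^j=\tfrac1{2j}&\text{on }\p B^+_{2R_0}\setminus K^{\mathrm o}.
\end{cases}
\end{align*}
The only real simplification is the initial subsolution: since $h^j\equiv 0$ on $(-\infty,\tfrac1{2j}]$, the \emph{constant} $\tfrac1{2j}$ is a subsolution ($F(0)=0=h^j(\tfrac1{2j})$), so one takes $w^j_0\equiv\tfrac1{2j}$ (with boundary value $1$ on $K^{\mathrm o}$) and no distance-function barrier near $K$ is needed; this also makes the jump of the boundary datum across $\p K$ harmless, $\p K$ having zero capacity. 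The supersolution $w^\sharp_j$ (equal to $1$ on a half-ball $B^+_{r_0}\supset K$, to $\tfrac1{2j}$ outside $B^+_{R_0}$, interpolating by the quartic profile of \eqref{eq:supersol}) is built as before; the iteration gives $w^j_0\le w^j_k\le w^\sharp_j$ and a solution $v^j=\lim_kw^j_k$ with $0\le v^j\le1$ and $v^j\equiv\tfrac1{2j}$ on $B^+_{2R_0}\setminus\overline{B^+_{R_0}}$. As the $h^j$ are uniformly bounded, $v:=\lim_jv^j$ exists along a subsequence, solves $F(D^2v)=h(v)$ in $\R^n_+$, is nonnegative, $\le1$, vanishes outside $\overline{B^+_{R_0}}$, and so is compactly supported after extension by $0$.

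Next I would check the boundary conditions. The relations $v=1$ on $K^{\mathrm o}$ and $v=0$ on $\R^{n-1}\setminus K$ pass to the limit from $v^j=1$ on $K^{\mathrm o}$ and $v^j=\tfrac1{2j}\to0$ on $B'_{2R_0}\setminus K$; near any point of $K^{\mathrm o}$ (resp.\ of $\R^{n-1}\setminus K$) the domain is locally a half-space with locally constant datum, so boundary regularity for uniformly elliptic $F$ with bounded right-hand side gives that $v$ is continuous up to that part of $\{x_n=0\}$ with the correct value; thus $v$ is continuous on $\overline{\R^n_+}\setminus\p K$, and only $\p K$ carries a discontinuity. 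Finally $v<1$ in $\R^n_+$, since an interior point with $v=1$ would give $D^2v\le0$, hence $F(D^2v)\le0<h(1)=h(v)$.

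For quasi-concavity I would let $v^*$ be the quasi-concave envelope of $v$; by the thin analog of Proposition~\ref{prop:v*-subsol} (Appendix~\ref{appen:quasi-concave-subsol}) it is a subsolution of \eqref{eq:sol-const-thin}, so it suffices to prove $v^*\equiv v$, by the Lavrentiev principle as in \emph{Step 2} of Lemma~\ref{lem:quasi-concave-const}. Assume WLOG $0\in K^{\mathrm o}$, and for $t\in(0,1)$ set $v_t(x):=v(tx)$, $D:=\{v>0\}$, $D^*:=\{v^*>0\}$, $D_t:=t^{-1}D$; the dilation fixes $\{x_n=0\}$ and $\R^n_+$, and $v_t$ is quasi-concave. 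From $v<1$ in $\R^n_+$ and $v=1$ on $K^{\mathrm o}$ one gets $\{v^*=1\}=\overline{\mathrm{conv}}\{v=1\}=K$, hence $v^*<1$ on $\overline{\R^n_+}\setminus K$; moreover $v\to1$ at $0$ within $\overline{\R^n_+}$ (Step 2), so $v_t\to1$ uniformly on the bounded set $\overline{D^*}$ as $t\to0$, while $tK\subset K^{\mathrm o}$ gives $v_t\equiv1$ on $t^{-1}K^{\mathrm o}\supset K$ for $t<1$; these facts yield $E:=\{t\in(0,1):v_t\ge v^*\text{ on }D^*\}\neq\emptyset$. Supposing $t_0:=\sup E<1$, the continuity/monotonicity argument of Lemma~\ref{lem:quasi-concave-const} produces a contact point $x^0\in\overline{D^*}\setminus K$ with $v_{t_0}(x^0)=v^*(x^0)$, which one shows may be taken in $\R^n_+$. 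Then, exactly as in Lemma~\ref{lem:quasi-concave-const}: if $x^0\in(D^*\cap D_{t_0})\setminus K$, then $h(v^*)>t_0^2h(v_{t_0})$ near $x^0$ gives $\cM^-_{\la,\Lambda}(D^2(v_{t_0}-v^*))<0$ while $v_{t_0}-v^*\ge0$ has a local minimum at $x^0$, contradicting the strong minimum principle; if $x^0\in\p D^*\cap\p D_{t_0}$, one uses $v^*,v_{t_0}\le\e_1$ near $x^0$ (so both right-hand sides equal $1$), builds a thin cone ${\mathcal C}_{x^0}\subset D^*$ at $x^0$ from the $C^1$ free-boundary regularity of \cite{Lee98} at the extreme points whose convex combination is $x^0$, and contrasts the growth rates $cr^{1+\al}\le\sup_{B_r(x^0)\cap{\mathcal C}_{x^0}}(v_{t_0}-v^*)\le Cr^{1+\gamma}$.

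I expect the main obstacle to be this last step, concentrated near the relative boundary $\p K$: the verification that $E\neq\emptyset$ requires comparing $1-v^*$ with $1-v_t$ for points just above $\p K$, and, more seriously, when the contact point $x^0$ lies over $\p K$ the free boundary $\p D$ may meet the fixed hyperplane along $\p K$, so the interior free-boundary regularity of \cite{Lee98} must be combined with the flat-boundary behaviour of $v$ to still construct the cone ${\mathcal C}_{x^0}$ — precisely the singularity phenomenon flagged in Remark~\ref{rem:unique} and Figure~\ref{fig:singularity}. A routine but necessary check is that the discontinuity of $v$ across $\p K$ affects neither the definition nor the upper semicontinuity of $v^*$.
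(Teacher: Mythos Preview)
Your overall architecture matches the paper's, and your Step~1 simplification --- taking the constant $\tfrac1{2j}$ as the initial subsolution and recovering the boundary values of $v$ via local boundary regularity rather than distance-function barriers --- is legitimate for this lemma (the paper's barriers $w_\sharp^m$ are built mainly because they are reused in Theorem~\ref{thm:quasi-concave-thin}).  The quasi-concavity step, however, has two genuine gaps.

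\textbf{Nonemptiness of $E$.}  Your argument does not close.  You have $v^*<1$ on $\overline{\R^n_+}\setminus K$ and $v_t\to1$ uniformly on $\overline{D^*}$, but these alone do not give $v_t\ge v^*$ for small $t$: at points $x=(x',x_n)$ with $x'\in K^{\mathrm o}$ and $x_n\downarrow0$ the envelope $v^*$ also tends to $1$, so you are comparing two quantities that both approach $1$ and need their \emph{rates} in $x_n$.  The paper supplies exactly this: Hopf's lemma applied to the subsolution $v^*$ (uniformly over $K^{\mathrm o}$, since the flat boundary satisfies an interior sphere condition) gives $v^*(x)\le1-c_1x_n$ in a slab over $K^{\mathrm o}$, while $C^1$ regularity of $v$ up to $K^{\mathrm o}$ gives $v(x)\ge1-c_2x_n$ near $0$; hence $v_t(x)=v(tx)\ge1-c_2tx_n\ge1-c_1x_n\ge v^*(x)$ once $t\le c_1/c_2$.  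You flag the difficulty near $\p K$ in your last paragraph but do not provide the Hopf-type comparison that resolves it.

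\textbf{Locating the contact point in $\R^n_+$.}  You write that ``the continuity/monotonicity argument of Lemma~\ref{lem:quasi-concave-const}'' produces $x^0\in\overline{D^*}\setminus K$ which ``one shows may be taken in $\R^n_+$.''  But that argument relies on the compactness of $\overline{D^*}\setminus(t_0^{-1}K)^{\mathrm o}$; since here $K\subset\R^{n-1}$ has empty interior in $\R^n$, $(t_0^{-1}K)^{\mathrm o}=\emptyset$ and the step collapses.  The paper uses a genuinely different argument: assuming $v_{t_0}>v^*$ throughout $\overline{D^*}\cap\R^n_+$, take for each $t>t_0$ a connected component $A_t$ of $\{v_t<v^*\}$, pass to a subsequential limit set $A\subset\overline{D^*}$, and conclude $A\subset\R^{n-1}$.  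From the boundary values of $v_{t_0}$ and $v^*$ on $\R^{n-1}$ one infers $A\subset K^{\mathrm o}$ or $A\subset\R^{n-1}\setminus t_0^{-1}K$; in each case the maximum principle applied to $v^*-v_t$ on $A_t$ (using $h\equiv1$ near $0$ in the second case and continuity/positivity of $h$ near $1$ in the first) yields a contradiction.  This component-limit analysis is the new substantive ingredient in the thin case and is not a routine transplant from Lemma~\ref{lem:quasi-concave-const}.
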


\begin{proof}
The proof follows the  argument used in the proof of Lemma~\ref{lem:quasi-concave-const}. 

\medskip\noindent\emph{Step 1.}
For a large constant $R_0>0$ and functions $h^j$ as in \eqref{eq:h^j}, we find solutions $v^j:B_{2R_0}^+\to[0,1]$ of
\begin{align}
    \label{eq:sol-const-thin-reg}
    \begin{cases}
    F(D^2v^j)=h^j(v^j)&\text{in }B_{2R_0}^+,\\
    v^j=1&\text{on }K^{\mathrm{o}},\\
    v^j=\frac1{2j}&\text{on }(B_{2R_0}'\setminus K)\cup (\p B_{2R_0})^+,
    \end{cases}
\end{align}
with $v^j=\frac1{2j}$ in $B^+_{2R_0}\setminus \overline{B^+_{R_0}}$.
 We let $\tau_m=1/m$, $m\in\N$, and define subsets of $K$ by
$$
K^{\tau_m}:=\{x\in K\,:\, d_{\p_{\R^{n-1}}K}(x)\ge\tau_m\},
$$
where $\p_{\R^{n-1}}K$ is the boundary of $K$ relative to $\R^{n-1}$. Note that $K^{\tau_m}\neq\emptyset$ when $m$ is large, say $m\ge M$ for some $M\in\N$. We then consider its translation $$\tilde{K}^{\tau_m}:=K^{\tau_m}-\tau_mx_n=\{x-\tau_mx_n\,:\, x\in K^{\tau_m}\}\subset \{x_n=-\tau_m\}.
$$
For small $\rho_m>0$, to be determined later, we define a function $w_\sharp^m:\R^n_+\to[0,\infty)$ by \begin{align}\label{eq:subsol-thin}
    w_\sharp^m(x):=\begin{cases}
    \frac1{\rho_m^\be}\left[\tau_m+\rho_m-d_{\tilde K^{\tau_m}}(x)\right]^\be,& \tau_m<d_{\tilde K^{\tau_m}}(x)<\tau_m+\rho_m,\\
    0,& d_{\tilde K^{\tau_m}}(x)\ge \tau_m+\rho_m.
    \end{cases}
\end{align}
Note that $w_\sharp^m$ is an analogue of \eqref{eq:subsol}, and $w^m_\sharp=1$ on $K^{\tau_m}$. Since $d_{\tilde{K}^{\tau_m}}(x)\ge\sqrt{2}\tau_m$ for every $x\in\R^{n-1}\setminus K$, we have $w_\sharp^m=0$ on $\R^{n-1}\setminus K$, provided $\rho_m<(\sqrt{2}-1)\tau_m$. In view of \emph{Step 1} in the proof of Lemma~\ref{lem:quasi-concave-const}, we can see that $F(D^2w_\sharp^m)\ge h^j(w_\sharp^m)$ in $\{\tau_m<d_{\tilde K^{\tau_m}}<\tau_m+\rho_m\}\cap\R^n_+$ if $\rho_m>0$ is small enough, depending only on $a$, $\la$, $\Lambda$, $\tau_m$, $K$. Defining a function $w_{\sharp,j}^m:\R^n_+\to[1/(2j),\infty)$ by
 \begin{align*}
    w_{\sharp,j}^m(x):=\begin{cases}
    w_\sharp^m(x),& \tau_m<d_{\tilde K^{\tau_m}}(x)<\tau_m+\rho_m-\rho_m\left(\frac1{2j}\right)^{1/\be},\\
    \frac1{2j},& d_{\tilde K^{\tau_m}}(x)\ge \tau_m+\rho_m-\rho_m\left(\frac1{2j}\right)^{1/\be}
    \end{cases}
\end{align*}
and arguing as in the Lemma~\ref{lem:quasi-concave-const}, we can show that $w^m_{\sharp,j}$ solves for any $R_0>1$
\begin{align*}
    \begin{cases}
    F(D^2w_{\sharp,j}^m)\ge h^j(w_{\sharp,j}^m)&\text{in }B_{2R_0}^+,\\
    w_{\sharp,j}^m=1&\text{on }K^{\tau_m},\\
    \frac1{2j}\le w_{\sharp,j}^m\le1&\text{on }K\setminus K^{\tau_m},\\
    w_{\sharp,j}^m=\frac1{2j}&\text{on }(B_{2R_0}'\setminus K)\cup(\p B_{2R_0})^+,
    \end{cases}
\end{align*}
with $w^m_{\sharp,j}=\frac1{2j}$ in $B^+_{2R_0}\setminus \overline{B^+_{2R_0}}$.
 As $K^{\tau_m}\nearrow K^{\mathrm{o}}$, the function $w_{\sharp,j}:\R^n_+\to[0,1]$ defined by 
$$
w_{\sharp,j}:=\sup_{m\ge M}w_{\sharp,j}^m
$$
becomes a subsolution of \eqref{eq:sol-const-thin-reg}, with $w_{\sharp,j}=\frac1{2j}$ in $B^+_{2R_0}\setminus \overline{B^+_{R_0}}$, for any $R_0>1$.

Next, we construct a supersolution of \eqref{eq:sol-const-thin-reg}. 
Let $w^\sharp_j$ be as in \eqref{eq:supersol} with $K\subset \R^{n-1}$. In view of the proof of Lemma~\ref{lem:quasi-concave-const}, for large $R_0$, $w^\sharp_j$ restricted to $B^+_{2R_0}$ satisfies
\begin{align*}
    \begin{cases}
    F(D^2w^\sharp_j)\le h^j(w^\sharp_j)&\text{in }B^+_{2R_0},\\
    w^\sharp_j=1&\text{on }K,\\
    \frac1{2j}\le w^\sharp_j\le 1&\text{on }B'_{2R_0}\setminus K,\\
    w^\sharp_j=\frac1{2j}&\text{on }(\p B_{2R_0})^+,
    \end{cases}
\end{align*}
with $w_j^\sharp=\frac1{2j}$ in $B_{2R_0}^+\setminus \overline{B_{R_0}^+}$.
 Notice that $w^\sharp_j$ may not be a supersolution of \eqref{eq:sol-const-thin-reg} as it may be strictly greater than $\frac1{2j}$ on $B'_{2R_0}\setminus K$. To rectify this, let $w_j^\star: B^+_{2R_0}\to\R$ be a solution of \begin{align*}
    \begin{cases}
    F(D^2w_j^\star)=0&\text{in }B_{2R_0}^+,\\
    w_j^\star=1&\text{on }K,\\
    w_j^\star=\frac1{2j}&\text{on }(B_{2R_0}'\setminus K)\cup (\p B_{2R_0})^+.
    \end{cases}
\end{align*}
Note that $0\le w_j^\star\le 1$ in $B^+_{2R_0}$ by maximum/minimum principle, and set $\tilde w^\sharp_j:=\min\{w^\sharp_j,w_j^\star\}$ in $B^+_{2R_0}$. Then it is easy to see that $\tilde w_j^\sharp$ is a supersolution of \eqref{eq:sol-const-thin-reg}.

Now, to find a solution of \eqref{eq:sol-const-thin-reg} by using $w_{\sharp,j}$ and $\tilde w^\sharp_j$ constructed above, we take a constant $\mu^j>0$ such that $2|\D h^j|\le\mu^j$. We then make a sequence of functions $\{w_k^j\}^\infty_{k=0}$ defined in $B_{2R_0}^+$as follows: let $w_0^j=w_{\sharp,j}$, and for $k\ge0$ let $w^j_{k+1}$ be the unique solution to \begin{align*}
    \begin{cases}
    F(D^2w^j_{k+1})-\mu w^j_{k+1}+\mu w^j_k-h^j(w^j_k)=0&\text{in }B^+_{2R_0},\\
    w^j_{k+1}=1&\text{on }K,\\
    w^j_{k+1}=\frac1{2j}&\text{on }(\p B_{2R_0}^+)\setminus K.
    \end{cases}
\end{align*}
By using the argument in \cite{RicTei11}, we can see that $$
w_{\sharp,j}=w_0^j\le w_1^j\le\cdots\le w^j_k\le w^j_{k+1}\le\cdots \le \tilde w^\sharp_j.
$$
Then, the limit function $v^j(x):=\lim_{k\to\infty}w^j_k(x)$ satisfies \eqref{eq:sol-const-thin-reg}. Moreover, since $h^j$ is uniformly bounded, up to a subsequence $v:=\lim_{j\to\infty}v^j$ exists in $B_{2R_0}^+$ and it clearly satisfies $F(D^2v)=h(v)$ in $B^+_{2R_0}$ and $v=0$ in $B_{2R_0}^+\setminus\overline{B_{R_0}^+}$. For the boundary value of $v$ on $B_{R_0}'$, we note that $w_\sharp\le v^j$ in $B_{2R_0}^+$. In addition, we let $v^\star:B_{2R_0}^+\to\R$ be a solution to \begin{align*}
    \begin{cases}
    F(D^2v^\star)=0&\text{in }B_{2R_0}^+,\\
    v^\star=1&\text{on }K\cup(\p B_{2R_0})^+.\\
    v^\star=0&\text{on }B_{2R_0}'\setminus K.
    \end{cases}
\end{align*}
Then, for each $j$, $F(D^2v^\star)\le F(D^2v^j)$ in $B_{2R_0}^+$ and $v^\star\ge v^j$ on $\p(B_{2R_0}^+)$, thus $v^\star\ge v^j$ in $B_{2R_0}^+$ by comparison principle. In sum we have $w_\sharp\le v^j\le v^\star$, $v_\sharp=v^\star=1$ on $K^{\mathrm{o}}$ and $v_\sharp=v^\star=0$ on $B'_{2R_0}\setminus K$. This yields $v=1$ on $K^{\mathrm{o}}$ and $v=0$ on $B'_{2R_0}\setminus K$. Extending $v=0$ in $\R^n_+\setminus\overline{B_{2R_0}^+}$, $v$ becomes a compactly-supported nonnegative solution of \eqref{eq:sol-const-thin}.

\medskip\noindent\emph{Step 2.} Our next objective is to show the quasi-concavity of $v$.
 Since the quasi-concave envelope $v^*$ (of $v$) is a subsolution of \eqref{eq:sol-const-thin} by Proposition~\ref{prop:v*-subsol} in Appendix~\ref{appen:quasi-concave-subsol}, it suffices to show that $v\equiv v^*$. As in Lemma~\ref{lem:quasi-concave-const}, we will use Lavrentiev Principle. We assume without loss of generality that $0\in K$ and denote for $t>0$
\begin{align}\label{eq:Lavrentiev-notation}
\begin{split}
    &D:=\{x\in\overline{\R^n_+}:v(x)>0\},\qquad D^*:=\{x\in\overline{\R^n_+}:v^*(x)>0\},\\
    &v_t(x):=v(tx),\qquad D_t:=\{x\in\overline{\R^n_+}:tx\in D\}=\{x\in\overline{\R^n_+}:v_t(x)>0\}.
\end{split}\end{align}
Let also $$
E:=\{0<t<1\,:\,v_t(x)\ge v^*(x)\text{ for all }x\in D^*\}.
$$ 
We first claim that $E$ is a nonempty set. Indeed, the assumption that K has a nonempty $(n-1)$-dimensional interior gives that it contains a thin ball $B_{\rho_0}'=B_{\rho_0}\cap\R^{n-1}$ for some $\rho_0>0$. We then clearly have $v_t\ge v^*$ on $D^*\cap\R^{n-1}$ for small $t>0$. To extend the inequality $v_t\ge v^*$ to $D^*$ for small $t>0$, we observe that $F(D^2v^*)\ge h(v^*)>0$ in the positivity set of $v^*$, in particular near $K$. Thus, by the strong maximum principle, $v^*<1$ in $\R^n_+$ near $K$. Moreover, by applying the maximum principle to $v$, we get $v\le 1$ in $\R^n_+$, thus $v^*\le 1$ in $\R^n_+$. This gives $v^*=v=1$ on $K^{\mathrm{o}}$. Now, as the flat boundary $\p(\R^n_+)=\R^{n-1}$ satisfies the interior sphere condition \emph{uniformly} on $K^{\mathrm{o}}$, Hopf's  Lemma tells us that there is a positive constant $c_1>0$ such that for every $x^0\in K^{\mathrm{o}}$ $$
\limsup_{\R^n_+\ni x\to x^0}\frac{v^*(x)-v^*(x^0)}{|x-x^0|}<-c_1.
$$
This implies that $v^*(x)\le 1-c_1x_n$ in $[0,s)\times K^{\mathrm{o}}$ for some $c_1>0$ and $s>0$.

On the other hand, for $\rho_0>0$ small so that $B'_{\rho_0}\Subset K^{\mathrm{o}}$, since $\p_{x_n}v$ is continuous in $B_{\rho_0}^+\cup B_{\rho_0}'$ (and hence bounded there), we obtain that $v(x)\ge 1-c_2x_n$ in $B_{\rho_0}^+\cup B_{\rho_0}'$ for some $c_2>0$. This, combined with the above estimate $v^*\le 1-c_1x_n$ near $K$, implies that $v_t\ge v^*$ in $D^*$ for small $t>0$. Therefore, $E\neq\emptyset$.

Notice that the quasi-concavity of $v$ follows once we show $\sup E=1$. Towards a contradiction suppose $t_0:=\sup E\in(0,1)$. Then we claim that there is a point $x^0\in \overline{D^*}\cap \R^{n}_+$ such that $v_{t_0}(x^0)=v^*(x^0)$. Indeed, if we assume to the contrary that the claim is not true, then $v_{t_0}>v^*$ in $\overline{D^*}\cap \R^n_+$. By the definition of $t_0$, for each $t\in(t_0,1)$ the set $\{x\in D^*:v_t(x)<v^*(x)\}$ is nonempty. For such $t$, we take a connected component $A_t$ of $\{x\in D^*\,:\, v_t(x)<v^*(x)\}$. Since $D^*$ is bounded, we can find a sequence $\{t_j\}\subset(t_0,1)$ such that $t_j\searrow t_0$ and $A_{t_j}$ converges to a nonempty set, say $A$, in $\overline{D^*}$. As $v_{t_0}=v^*$ in $A$, the assumption $v_{t_0}>v^*$ in $\overline{D^*}\cap\R^n_+$ yields $A\subset\R^{n-1}$. From the observation 
$$v^*=\begin{cases}
1&\text{on }K^{\mathrm{o}},\\
0&\text{on }\R^{n-1}\setminus K
\end{cases}\quad\text{ and }\quad v_{t_0}=\begin{cases}
1&\text{on }t_0^{-1}K^{\mathrm{o}},\\
0&\text{on }\R^{n-1}\setminus t_0^{-1}K,
\end{cases}
$$ and 
we infer that either $A\subset \R^{n-1}\setminus t_0^{-1}K$ or $A\subset K^{\mathrm{o}}$.

If $A\subset\R^{n-1}\setminus t_0^{-1}K$, then using that $h(s)=1$ for $0<s<\e_1$, we have for $t>t_0$ close to $t_0$ $$
F(D^2v_t)=t^2h(v_t)<h(v_t)=1=h(v^*)\le F(D^2v^*)\quad\text{in }A_t.
$$
Then \begin{align*}
    \begin{cases}
    \cM^+_{\la,\Lambda}(D^2(v^*-v_t))\ge F(D^2v^*)-F(D^2v_t)>0\quad\text{in }A_t,\\
    v^*-v_t>0\quad\text{in }A_t,\\
    v^*-v_t=0\quad\text{on }\p A_t.
    \end{cases}
\end{align*}
This is a contradiction by the maximum principle.

Next, we consider the case when $A\subset K^{\mathrm{o}}$. From the continuity and the positivity of $h$ on $(0,\infty)$, there is a constant $\delta\in(0,1)$ such that if $s_1,s_2\in(1-\delta,1)$, then $\frac{h(s_1)}{h(s_2)}\ge\left(\frac{t_0+1}2\right)^2$. Thus, for $t>t_0$ close to $t_0$ $$
F(D^2v_t)=t^2h(v_t)\le t^2\left(\frac2{t_0+1}\right)^2h(v^*)<h(v^*)\le F(D^2v^*)\quad\text{in }A_t.
$$
As in the previous case, this yields a contradiction, and the claim is proved.

Now, we have $x^0\in\overline{D^*}\cap\R^n_+$ with $v_{t_0}(x^0)=v^*(x^0)$, thus we can split our proof into the following two cases 
\begin{align*}
    &\mathbf{A}.\ x^0\in(D^*\cap D_{t_0})\setminus \R^{n-1},\\
    &\mathbf{B}.\ x^0\in(\p D^*\cap\p D_{t_0})\setminus\R^{n-1}.
\end{align*}
In each case, we can argue as we did in Lemma~\ref{lem:quasi-concave-const} to get a contradiction. This completes the proof.
\end{proof}

\begin{proof}[Proof of Theorem~\ref{thm:quasi-concave-thin} for the Fully nonlinear case] \emph{Step 1.} For the proof, we follow the argument in the proof of Theorem~\ref{thm:quasi-concave}, with the help of Lemma~\ref{lem:quasi-concave-const-thin}. Let $g^i$ be as in \eqref{eq:assump-f-approx}, and apply Lemma~\ref{lem:quasi-concave-const-thin} to obtain a compactly-supported, nonnegative quasi-concave solution $u^i$ to \begin{align*}
    \begin{cases}
    F(D^2u^i)=g^i(u^i)&\text{in }\R^n_+,\\
    u^i=1&\text{on }K^{\mathrm{o}},\\
    u^i=0&\text{on }\R^{n-1}\setminus K.
    \end{cases}
\end{align*}
Repeating the argument in the proof of Theorem~\ref{thm:quasi-concave}, we have over a subsequence $u:=\lim_{i\to\infty}u^i$ exists in $\R^n_+$.

It is easy to see that $u$ is quasi-concave, $F(D^2u)=g(u)$ in $\{u>0\}$ and $|\D u|=0$ on $\p\{u>0\}\cap\R^n_+$. Moreover, each $u^i$ constructed in Lemma~\ref{lem:quasi-concave-const-thin} satisfies $0\le u^i\le1$ in $\R^n_+$, which gives $0\le u\le1$ in $\R^n_+$. As in \emph{Step 3} in the proof of Theorem~\ref{thm:quasi-concave} for the case $L=F$, we can prove the non-degeneracy property of $u$, which combined with $0\le u\le1$ implies the boundedness of $\{u>0\}$. Thus, it remains to show $u=1$ on $K^{\mathrm{o}}$ and $u=0$ on $\R^{n-1}\setminus K$.

\medskip\noindent\emph{Step 2.} To show that $u=1$ on $K^{\mathrm{o}}$, for each $m\in\N$, let $w^m_\sharp$ be as in \eqref{eq:subsol-thin}. In view of \emph{Step 1} in the proof of Lemma~\ref{lem:quasi-concave-const-thin}, $w_\sharp^m\le u^i$ in $\R^n_+$ for $m\ge M$. This holds for every $i$ since $\tau_m=1/m$ and $\rho_m$ in \eqref{eq:subsol-thin} as well as $M$ depend only on $a$, $\la$, $\Lambda$, $K$. We then have $w^m_\sharp\le u$ in $\R^n_+$ for $m\ge M$, which along with $u\le1$ in $\R^n_+$ and $w^m_\sharp=1$ on $K^{\tau_m}$ gives $u=1$ on $K^{\tau_m}$. Taking $m\to\infty$, we get $u=1$ on $K^{\mathrm{o}}$.

Finally, to prove $u=0$ on $\R^{n-1}\setminus K$, we fix $R>0$ large so that $B'_R\supset K$, and let $u_R^\star:B_R^+\to\R$ be a solution of \begin{align*}
    \begin{cases}
    F(D^2u_R^\star)=0&\text{in }B_R^+,\\
    u_R^\star=1&\text{on } K\cup(\p (B_R^+))^+,\\
    u_R^\star=0&\text{on }B_R'\setminus K.
    \end{cases}
\end{align*}
Then, applying the comparison principle yields $u^i\le u_R^\star$ in $B_R^+$. Taking $i\to\infty$, we get $u\le u_R^\star$ in $B_R^+$, and thus $u=0$ on $B_R'\setminus K$. Letting $R\to\infty$, we obtain $u=0$ on $\R^{n-1}\setminus K$. This completes the proof.
\end{proof}

%%%%%%%%%%%%%%%%%%%%%%%%%%%%%%%%%%%%%%%%%%%%%%%%%%%%%%%%%

\subsection{$p$-Laplacian case}

\begin{lemma}
\label{lem:quasi-concave-const-p-thin}
Suppose $K\in\tilde{\cA}$ and let $H$ be as in \eqref{eq:H}. Then there is an energy minimizer of 
$$
J_H(w,\R^n_+):=\int_{\R^n_+}\left(|\D w|^p+pH(w)\right)
$$
over $\tilde{\mathcal{K}}_K:=\{w\in W^{1,p}(\R^n_+)\,:\,w=1\text{ on }K,\,\,\, w=0\text{ on }\R^{n-1}\setminus K\}$,
which is compactly-supported, nonnegative and quasi-concave.
\end{lemma}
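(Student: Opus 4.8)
The plan is to combine the variational, $\e$-regularized construction of Lemma~\ref{lem:quasi-concave-const-p} with the treatment of the thin (co-dimension-one) boundary data developed in Lemma~\ref{lem:quasi-concave-const-thin}. For small $\e\in(0,1)$ I would first minimize over $\tilde{\mathcal K}_K$ the regularized energy $J^\e_H(\cdot,\R^n_+)$ built as in \eqref{eq:energy-ep} with $\R^n\setminus K$ replaced by $\R^n_+$ (the role of the term $-\e^{p/2}$ on the infinite-measure domain is as in the remark after Lemma~\ref{lem:quasi-concave-const-p}; for $p\ge2$, where the datum $\chi_K$ does not lie in the trace space $W^{1-1/p,p}(\R^{n-1})$, one works first with the Sobolev-regular data obtained by replacing $K$ with the shrunken obstacles $K^{\tau_m}$ of Lemma~\ref{lem:quasi-concave-const-thin} and lets $\tau_m\to0$ at the end). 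Lower semicontinuity gives a minimizer $v^\e$, which solves $\ddiv\big[(\e+|\D v^\e|^2)^{p/2-1}\D v^\e\big]=h(v^\e)$ in $\R^n_+$, equivalently $a^{ij}(\D v^\e)v^\e_{x_ix_j}=h(v^\e)$ with the ellipticity \eqref{eq:ellip}; truncating at $0$ and at $1$ (using $H=0$ on $(-\infty,0)$ and $H$ nondecreasing) gives $0\le v^\e\le1$, while $v^\e=1$ on $K$ and $v^\e=0$ on $\R^{n-1}\setminus K$ in the trace sense.

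Before passing to the limit I would record, for fixed $\e$, the two key facts about $v^\e$. First, $v^\e$ is compactly supported: by \eqref{eq:assump-h} one has $h(v^\e)\ge c_2\chi_{\{v^\e>0\}}$ for some $c_2>0$; by \cite{ChoLew91}, $\sup_{\R^n_+\setminus\tilde K}|\D v^\e|<\infty$ for a compact set $\tilde K$ strictly containing $K$, so $I_{v^\e}$ is uniformly elliptic away from $K$; and then the barrier $v^\e(x)-v^\e(x^0)-\de|x-x^0|^2$ of \emph{Step 1} of Lemma~\ref{lem:quasi-concave-const-p} forces $\{v^\e>0\}$ to be bounded. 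Second, $v^\e$ is quasi-concave: by Proposition~\ref{prop:v*-subsol-p} its quasi-concave envelope $(v^\e)^*$ satisfies $I_{v^\e}(D^2(v^\e)^*)\ge h((v^\e)^*)$ in $\R^n_+$ with the same thin data, and since $I_{v^\e}$ is concave, $1$-homogeneous and (for fixed $\e$) uniformly elliptic, the Lavrentiev scaling argument of \emph{Step 2} of Lemma~\ref{lem:quasi-concave-const-thin} applies with $v_t(x)=v^\e(tx)$, $D^*=\{(v^\e)^*>0\}$ and $E=\{t\in(0,1):v_t\ge(v^\e)^*\text{ on }D^*\}$.

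The hard part, exactly as in the thin fully nonlinear case, is to show $E\neq\emptyset$ and then to exclude $t_0:=\sup E\in(0,1)$. Since $K$ has nonempty $(n-1)$-dimensional interior it contains a thin ball $B'_{\rho_0}$, so $v_t\ge(v^\e)^*$ on $D^*\cap\R^{n-1}$ for small $t$; to extend this into $\R^n_+$ one needs $v^\e(x)\ge1-c_2x_n$ near $B'_{\rho_0}$ and $(v^\e)^*(x)\le1-c_1x_n$ near $K^{\mathrm o}$. The first bound is immediate from the uniform $C^{1,\al}$ estimate of $v^\e$ up to the flat boundary away from $\p_{\R^{n-1}}K$. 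The second is the delicate point: the coefficients $a^{ij}$ are even in their argument, so $1-v^\e$ is a supersolution of the same linearized equation, which near $K^{\mathrm o}$ (where $|\D v^\e|$ is bounded, again by boundary $C^{1,\al}$ regularity) is uniformly elliptic; Hopf's lemma then gives $|\D v^\e|\ge c>0$ on $K^{\mathrm o}$, which makes $I_{v^\e}$ uniformly elliptic near $K^{\mathrm o}$, and a second application of Hopf to the $I_{v^\e}$-supersolution $1-(v^\e)^*$ yields $(v^\e)^*\le1-c_1x_n$ near $K^{\mathrm o}$. Granted $E\neq\emptyset$, if $t_0\in(0,1)$ one produces a contact point $x^0\in\overline{D^*}\cap\overline{\R^n_+}$ with $v_{t_0}(x^0)=(v^\e)^*(x^0)$ and runs the dichotomy of Lemma~\ref{lem:quasi-concave-const-thin}: the interior cases are closed by the strong minimum principle for $I_{v^\e}$, while the two flat-boundary subcases ($A\subset\R^{n-1}\setminus t_0^{-1}K$ and $A\subset K^{\mathrm o}$) use, respectively, that $h\equiv1$ near $0$ and that $h$ is continuous and strictly positive on $(0,\infty)$, giving strict inequalities $I_{v^\e}\big(D^2((v^\e)^*-v_t)\big)\ge h((v^\e)^*)-t^2h(v_t)>0$ that contradict the maximum principle on the component $A_t$.

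Finally I would let $\e\to0$. The uniform $C^{1,\al}_{\loc}$ estimates of \cite{ChoLew91} (also up to the flat boundary away from $\p_{\R^{n-1}}K$) provide a nonnegative quasi-concave limit $v$, and the uniform $W^{1,p}(B_R^+)$ bounds coming from $0\le v^\e\le1$ and the energy inequality against a fixed compactly supported competitor yield $v^\e\to v$ weakly in $W^{1,p}(B_R^+)$, hence convergence of traces and $v\in\tilde{\mathcal K}_K$, i.e.\ $v=1$ on $K$ and $v=0$ on $\R^{n-1}\setminus K$. That $v$ minimizes $J_H(\cdot,\R^n_+)$ over $\tilde{\mathcal K}_K$ follows from Fatou together with the dominated and monotone convergence arguments of \emph{Step 2} of Lemma~\ref{lem:quasi-concave-const-p}, first for compactly supported competitors and then for general $w\in\tilde{\mathcal K}_K$ via the cut-offs $\phi_m$ of \eqref{eq:cut-off-ftn}; in particular $\Delta_pv=h(v)$ in $\{v>0\}$. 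Compact support of $v$ is then obtained exactly as in \emph{Step 3} of Lemma~\ref{lem:quasi-concave-const-p}: since $0\le v\le1$ one has $\Delta_pv=h(v)\ge c_1v^b$ in $\{v>0\}$, and with $\bar v:=v^{1-b/(p-1)}$ and $q_{x^0}(x):=\bar c|x-x^0|^{p/(p-1)}$ the nondegeneracy estimate $\sup_{B_R(x^0)\cap\{v>0\}}\bar v\ge\bar cR^{p/(p-1)}$ contradicts $v\le1$ for $R$ large.
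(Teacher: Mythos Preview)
Your proposal is correct and follows essentially the same route as the paper: minimize the $\e$-regularized energy $J^\e_H(\cdot,\R^n_+)$ over $\tilde{\mathcal K}_K$, show that the minimizer $v^\e$ is compactly supported and quasi-concave by combining the barrier/non-degeneracy argument of Lemma~\ref{lem:quasi-concave-const-p} with the thin-boundary Lavrentiev scheme of Lemma~\ref{lem:quasi-concave-const-thin} (applied to the linear operator $I_{v^\e}$ via Proposition~\ref{prop:v*-subsol-p}), and then let $\e\to0$ using the uniform $C^{1,\al}$ bounds of \cite{ChoLew91} together with the Fatou/cut-off argument of \emph{Step 2--3} of Lemma~\ref{lem:quasi-concave-const-p}. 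One minor simplification: your two-stage Hopf argument is redundant, since once $|\D v^\e|$ is bounded near $K^{\mathrm o}$ the bounds \eqref{eq:ellip} already make $I_{v^\e}$ uniformly elliptic there (for fixed $\e>0$), so Hopf's lemma applies to $1-(v^\e)^*$ directly without first establishing $|\D v^\e|\ge c$; this is how the paper proceeds.
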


\begin{proof}
The lemma  can be proven  by following the lines in the proof of Lemma~\ref{lem:quasi-concave-const-p} and Lemma~\ref{lem:quasi-concave-const-thin}. We  thus only give a sketch of the proof.

For small $\e>0$, let $v^\e$ be a minimizer of $$
J^\e_H(w,\R^n_+)=\int_{\R^n_+}\left((\e+|\D w|^2)^{p/2}-\e^{p/2}+pH(w)\right)
$$
over $\tilde{\mathcal{K}}_K$. Then $v^\e$ solves \begin{align}
    \label{eq:sol-const-p-thin}
    \begin{cases}
        \ddiv\left[(\e+|\D v^\e|^2)^{\frac p2-1}\D v^\e\right]=h(v^\e)&\text{in }\R^n_+,\\
        v^\e=1&\text{on }K,\\
        v^\e=0&\text{on }\R^{n-1}\setminus K.
    \end{cases}
\end{align}
The first equation in \eqref{eq:sol-const-p-thin} is equivalent to $$
a^{ij}(\D v^\e)v^\e_{x_ix_j}=h(v^\e)\quad\text{in }\R^n_+,
$$
where $a^{ij}(z)=\left(\e+|z|^2\right)^{p/2-2}\left[(p-2)z_iz_j+\delta_{ij}\left(\e+|z|^2\right)\right]$. Here, $a^{ij}$ satisfies the ellipticity \eqref{eq:ellip}. Moreover, the fact that $\max\{v^\e,0\}$ and $\min\{v^\e,1\}$ are contained in $\tilde{\mathcal{K}}_K$ implies $0\le v^\e\le1$.

To show that $v^\e$ has a compact support, we suppose towards a contradiction $\{v^\e>0\}$ is an unbounded connected set. By Lemma~1 in \cite{ChoLew91}, we have $\sup_{\{x_1>1\}}|\D v^\e|<\infty$, and thus the operator $I_{v^\e}$, defined by $I_{v^\e}((b_{ij})_{n\times n})=a^{ij}(\D v^\e)b_{ij}$, is uniformly elliptic in $\{x_1>1\}$. Then, we can find a small constant $\de>0$ such that for any $R>1$ and $x^0\in\{v^\e>0\}$ with $B_R(x^0)\Subset\{x_n>1\}$,
$$
\sup_{\{v^\e>0\}\cap\p B_R(x^0)}v^\e\ge\de R^2.
$$
This non-degeneracy property of $v^\e$ contradicts its bound $0\le v^\e\le1$, and proves that $v^\e$ has a compact support.

Next, we show that $v^\e$ is quasi-concave by following the argument in \emph{Step 2} in the proof of Lemma~\ref{lem:quasi-concave-const-thin}. Indeed, Proposition~\ref{prop:v*-subsol-p} in Appendix~\ref{appen:quasi-concave-subsol} tells us its quasi-concave envelope $(v^\e)^*$ is a subsolution of \begin{align}
    \label{eq:const-p-2nd-subsol-thin}
    \begin{cases}
    I_{v^\e}(D^2(v^\e)^*)=h((v^\e)^*)&\text{in }\R^n_+,\\
    (v^\e)^*=1&\text{on }K,\\
    (v^\e)^*=0&\text{on }\R^{n-1}\setminus K.
\end{cases}
\end{align}
Assume $0\in K$ and for $t>0$ define (in analogy with \eqref{eq:Lavrentiev-notation}) \begin{align*}
    &D^\e:=\{x\in\overline{\R^n_+}:v^\e(x)>0\},\quad (D^\e)^*:=\{x\in\overline{\R^n_+}:(v^\e)^*>0\},\\
    &v^\e_t(x):=v^\e(tx),\quad D^\e_t:=\{x\in\overline{\R^n_+}:tx\in D^\e\}=\{x\in \overline{\R^n_+}:v^\e_t>0\},
\end{align*}
and $$
E^\e:=\{0<t<1:v_t^\e\ge(v^\e)^*\text{ in }(D^\e)^*\}.
$$
Note that the subsolution $(v^\e)^*$ of \eqref{eq:const-p-2nd-subsol-thin} satisfies Hopf's Lemma at every $x^0\in K$. Besides, for $\rho_0>0$ small so that $B_{2\rho_0}'\subset K$,
$\p_{x_n}v^\e$ is continuous in $\overline{B_{\rho_0}^+}$, hence bounded there.  As we have seen in Lemma~\ref{lem:quasi-concave-const-thin}, this implies $v_t^\e\ge (v^\e)^*$ in $(D^\e)^*$ for small $t\in(0,1)$, and thus $E^\e\neq\emptyset$. Now, as it is sufficient to prove $\sup E^\e=1$ for the quasi-concavity of $v^\e$, we assume to the contrary $\sup E^\e=t_0\in(0,1)$. Then, using that $I_{v^\e}$ is elliptic in $\R^n_+$ and uniformly elliptic in every compact subset in $\R^n_+$, we can argue as in Lemma~\ref{lem:quasi-concave-const-thin} to get a contradiction.

Due to the result of \cite{ChoLew91}, we have for some $0<\al<1$ that $\{v^\e\}_{0<\e<1}$ are uniformly $C^{1,\al}$ in compact subsets of $\R^n_+$, thus $v^\e\to v$ in $C^1_{\loc}(\R^n_+)$ for some function $v$. Clearly, $v$ is nonnegative and quasi-concave in $\R^n_+$. Moreover, we can proceed as in \emph{Step $2$-$3$} in the proof of Lemma~\ref{lem:quasi-concave-const-p} to prove that $v\in \tilde{\cK}_K$ is a minimizer of $J_H(\cdot,\R^n_+)$ over $\tilde{\cK}_K$ and $v$ has a compact support. This finishes the proof.
\end{proof}

%%%%%%%%%%%%%%%%%%%%%%%%%%%%%%%%%%%%%%%%%%%%%%%%%%%%%%%%%%%%

\begin{proof}[Proof of Theorem~\ref{thm:quasi-concave-thin} for the $p$-Laplacian case]
The proof is similar to the proof of Theorem~\ref{thm:quasi-concave}, thus we only give an idea of the proof. For $g^i$ and $G^i$ as before, we apply Lemma~\ref{lem:quasi-concave-const-p-thin} to find a minimizer $u^i$ of 
$$
J_{G^i}(w,\R^n_+)=\int_{\R^n_+}\left(|\D w|^p+pG^i(w)\right)
$$
over $\tilde{\cK}_{K}$, which is compactly-supported, nonnegative and quasi-concave. Then, Theorem~\ref{thm:grad-holder-p-geq2}-\ref{thm:grad-holder-reg-p<2} imply that up to a subsequence $u^i\to u$ in $C^1_{\loc}(\R^n_+)$ for some function $u\in C^{1,\al}_{\loc}(\R^n_+)$. Clearly, $u$ is nonnegative and quasi-concave, and satisfies $|\D u|=0$ on $(\p\Omega\setminus K)\cap\R^n_+$, where $\Omega=\{u>0\}$. In addition, we can argue as in the proof of Theorem~\ref{thm:quasi-concave} for the $p$-Laplacian case to prove that $u=1$ on $K$, $u=0$ on $\R^{n-1}\setminus K$ and $u$ is an energy minimizer in $\tilde{\cK}_K$. Finally, we can show as before, using non-degeneracy of $u$,  that $u$ has a compact support.
 This completes the proof.
\end{proof}

%%%%%%%%%%%%%%%%%%%%%%%%%%%%%%%%%%%%%%%%%%%%%%%%%

\section{Proof of Theorem~\ref{thm:quasi-concave-hyb}}

The purpose of this section is to prove Theorem~\ref{thm:quasi-concave-hyb}. It can be regarded as a corollary to Theorem~\ref{thm:quasi-concave} and Theorem~\ref{thm:quasi-concave-thin}, as its proof simply follows from the argument and the results in those two preceding theorems.

\begin{lemma}
\label{lem:quasi-concave-const-hyb}
Let $K\in\tilde{\cA}$ and suppose $h:\R\to\R$ satisfies \eqref{eq:assump-h}. Then there is a solution of 
\begin{align}
    \label{eq:sol-const-hyb}
    \begin{cases}
    F(D^2v)=h(v)&\text{in }\R^n\setminus K,\\
    v=1&\text{on }K^{\mathrm{o}},
    \end{cases}
\end{align}
which is a compactly-supported, nonnegative and quasi-concave.
\end{lemma}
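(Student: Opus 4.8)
The plan is to mimic closely the proof of Lemma~\ref{lem:quasi-concave-const}, with the modification that the obstacle set $K$ now lies in $\{x_n=0\}$ and has empty interior in $\R^n$, so that the solution is sought in the \emph{full} space $\R^n\setminus K$ (not the half-space $\R^n_+$). Thus one works with functions that are required to equal $1$ only on the relatively open $(n-1)$-dimensional set $K^{\mathrm o}$, and the construction must be symmetric under $x_n\mapsto -x_n$. First I would set up the approximation: for $h^j$ as in \eqref{eq:h^j}, solve the regularized problem
\begin{align*}
\begin{cases}
F(D^2v^j)=h^j(v^j)&\text{in }B_{2R_0}\setminus K,\\
v^j=1&\text{on }K^{\mathrm o},\\
v^j=\tfrac1{2j}&\text{on }\p B_{2R_0},
\end{cases}
\end{align*}
for a fixed large $R_0$ independent of $j$, with $v^j=\tfrac1{2j}$ outside $B_{R_0}$. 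To produce a subsolution one adapts \eqref{eq:subsol-thin}: for $\tau_m=1/m$ let $K^{\tau_m}=\{x\in K:d_{\p_{\R^{n-1}}K}(x)\ge\tau_m\}$, and build the pair of \emph{vertically shifted} comparison functions based on $\tilde K^{\tau_m,\pm}:=K^{\tau_m}\mp\tau_m e_n$ (one shift into each half-space), take the pointwise maximum, and then the supremum over $m$. This produces a subsolution $w_{\sharp,j}$ of the regularized problem with $w_{\sharp,j}=1$ on $K^{\mathrm o}$, matching $\tfrac1{2j}$ near $\p B_{2R_0}$. A supersolution is obtained exactly as in \eqref{eq:supersol} (which already only uses $K\subset B_{r_0/2}$, so it applies verbatim), and one runs the monotone iteration scheme of \cite{RicTei11} between $w_{\sharp,j}$ and $w^\sharp_j$ to get $v^j$; then $j\to\infty$ gives $v$ solving \eqref{eq:sol-const-hyb}, with $w_\sharp\le v\le 1$ forcing $v=1$ on $K^{\mathrm o}$ and $v$ compactly supported in $B_{R_0}$.

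For the quasi-concavity I would again invoke Proposition~\ref{prop:v*-subsol} to see that the quasi-concave envelope $v^*$ is a subsolution of \eqref{eq:sol-const-hyb}, and then run the Lavrentiev dilation argument: assume $0\in K^{\mathrm o}$, set $v_t(x)=v(tx)$, $D^*=\{v^*>0\}$, $E=\{0<t<1:v_t\ge v^*\text{ on }D^*\}$, and aim to show $\sup E=1$. Since $K^{\mathrm o}$ contains a thin ball $B'_{\rho_0}$ and $v_t=v^*=1$ there for small $t$, and $v^*\le 1$, one gets $E\ne\emptyset$ much as in the first step of Lemma~\ref{lem:quasi-concave-const}'s Step~2 (here the geometry is even simpler than in Lemma~\ref{lem:quasi-concave-const-thin} because there is no flat outer boundary condition to reconcile — one only needs $v_t\ge v^*$ near $K^{\mathrm o}$, which follows from $v^*\le 1 = v_t$ on $t^{-1}K^{\mathrm o}$ together with continuity and $v^*=0$ on $\p D^*$). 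If $t_0=\sup E<1$, one finds a contact point $x^0\in\overline{D^*}\setminus K$ with $v_{t_0}(x^0)=v^*(x^0)$ and splits into Case~$\mathbf A$ ($v^*(x^0)=v_{t_0}(x^0)>0$) and Case~$\mathbf B$ ($=0$), reaching a contradiction from the strict inequality $h(v^*)>t_0^2 h(v_{t_0})$ (Case~$\mathbf A$) or $h\equiv 1$ near the free boundary (Case~$\mathbf B$, using the cone/$C^{1,\gamma}$-versus-$r^{1+\al}$ dichotomy and Lee's $C^1$ regularity of the free boundary, exactly as written in Lemma~\ref{lem:quasi-concave-const}).

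The one genuinely new point — and the step I expect to require the most care — is the behaviour near the codimension-one obstacle $K$. Since $K$ is a "thin" set inside $\{x_n=0\}$, the solution $v$ need not be continuous across $K\setminus K^{\mathrm o}$ (indeed the analogous phenomenon is flagged after Theorem~\ref{thm:quasi-concave-thin}), so one must be careful that (i) the subsolutions $w_{\sharp,j}^{m,\pm}$ are genuinely $\le v^j$ despite the kink along $\{x_n=0\}$ — this is fine because taking a max of subsolutions is still a (viscosity) subsolution and the boundary condition $w^m_{\sharp,j}\le 1$ on $K\setminus K^{\tau_m}$ is respected — and (ii) in the Lavrentiev argument the contact point $x^0$ is taken in $\overline{D^*}\setminus K$, so that near $x^0$ the equation holds classically and the comparison/strong-minimum-principle steps go through unchanged; one must check that no contact can occur on the relative boundary $\p_{\R^{n-1}}K$ itself, which is handled by the same Hopf/strong-maximum-principle reasoning as in Lemma~\ref{lem:quasi-concave-const-thin} (near $K^{\mathrm o}$ one has $v^*<1$ off $\{x_n=0\}$ by the strong maximum principle applied to $F(D^2v^*)\ge h(v^*)>0$). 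Modulo these adjustments the proof is word-for-word that of Lemma~\ref{lem:quasi-concave-const}, so I would state it as such and only spell out the construction of the shifted subsolution and the nonemptiness of $E$ in detail.
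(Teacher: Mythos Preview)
Your overall strategy is the same as the paper's, with one difference in execution and one genuine gap.

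\textbf{Subsolution.} The paper does not rebuild the barrier from two vertically shifted distance functions. Instead it takes the solution $w$ of the thin half-space problem \eqref{eq:sol-const-thin} already produced in Lemma~\ref{lem:quasi-concave-const-thin} and uses its even reflection across $\{x_n=0\}$ as the subsolution $w_\sharp$ (setting $w_{\sharp,j}:=\max\{w_\sharp,1/(2j)\}$). This recycles prior work and avoids repeating the barrier calculations; your two-shift construction is in the same spirit but longer and requires some care (each single-shifted function, extended by its distance formula to all of $\R^n$, exceeds $1$ in a tube on the wrong side of $\{x_n=0\}$, so you would have to cap or restrict before taking the maximum).

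\textbf{Nonemptiness of $E$.} Here there is an actual gap. You claim $E\neq\emptyset$ ``follows from $v^*\le 1=v_t$ on $t^{-1}K^{\mathrm o}$ together with continuity and $v^*=0$ on $\partial D^*$,'' citing Lemma~\ref{lem:quasi-concave-const}'s Step~2. But that argument relies on $t^{-1}K$ being a full-dimensional set that swallows $D^*$ for small $t$; here $t^{-1}K^{\mathrm o}$ is $(n{-}1)$-dimensional, so equality on it gives no inequality off $\{x_n=0\}$. Moreover $v^*$ is continuous with $v^*=1$ on $K^{\mathrm o}$, so $v^*\to 1$ along all of $K$, and no uniform gap $\sup_{\overline{D^*}\setminus K^{\mathrm o}}v^*<1$ is available. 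What is needed --- and what the paper does, explicitly referring back to Lemma~\ref{lem:quasi-concave-const-thin} --- is the quantitative Hopf/bounded-gradient comparison: Hopf's Lemma applied to $v^*$ gives $v^*(x)\le 1-c_1|x_n|$ near $K^{\mathrm o}$, while boundedness of $\partial_{x_n}v$ near a fixed ball $B_{\rho_0}'\subset K^{\mathrm o}$ gives $v\ge 1-c_2|x_n|$ there, hence $v_t(x)=v(tx)\ge 1-c_2t|x_n|\ge v^*(x)$ once $c_2t<c_1$. You do invoke Hopf later, but only for ruling out contact on $\partial_{\R^{n-1}}K$; it is already essential at the $E\neq\emptyset$ stage. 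The remainder of your outline (the $A_t\to A\subset K$ argument for the contact point, and the $\mathbf A$/$\mathbf B$ dichotomy) matches the paper.
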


\begin{proof}
\emph{Step 1.} Let $h^j$ be as in \eqref{eq:h^j} and $R>0$ be a large constant. We first find solutions $v^j:B_{2R_0}\setminus K\to\R$ of 
\begin{align}
    \label{eq:sol-const-hyb-reg}
    \begin{cases}
    F(D^2v^j)=h^j(v^j)&\text{in }B_{2R_0}\setminus K,\\
    h^j=1&\text{on }K^{\mathrm{o}},\\
    v^j=\frac1{2j}&\text{in }B_{2R_0}\setminus\overline{B_{R_0}}.
    \end{cases}
\end{align}
Let $w$ be a solution of \eqref{eq:sol-const-thin} obtained in Lemma~\ref{lem:quasi-concave-const-thin}, and consider its even reflection $w_\sharp:\R^n\to\R$ defined by \begin{align*}
    w_\sharp(x):=\begin{cases}
    w_\sharp(x),&x_n\ge0,\\
    w_\sharp(x',-x_n),&x_n<0.
    \end{cases}
\end{align*}
If $R_0>0$ is large so that $B_{R_0}\supset \text{supp} w_\sharp$, then it is easy to see that $w_{\sharp,j}:=\max\left\{w_\sharp,\frac1{2j}\right\}$ is a subsolution of \eqref{eq:sol-const-hyb-reg}. Besides, the function $w_j^\sharp$ as in \eqref{eq:supersol} (with $K\subset\R^{n-1}$) is a supersolution of \eqref{eq:sol-const-hyb-reg}. With those $w_{\sharp,j}$ and $w_j^\sharp$ at hand, we can repeat the argument in Lemma~\ref{lem:quasi-concave-const} to get a solution $v^j$ of \eqref{eq:sol-const-hyb-reg} and prove that $v:=\lim_{j\to\infty}v^j$ exists over a subsequence and, after extending $v=0$ in $\R^n\setminus \overline{B_{2R_0}}$, $v$ solves \eqref{eq:sol-const-hyb}.

\medskip\noindent\emph{Step 2.} We proceed as in \emph{Step 2} in the proof of Lemma~\ref{lem:quasi-concave-const} up to the definition $$
E:=\sup\{0<t<1\,:\, v_t>v^*\text{ in }D^*\}.
$$
Since $K$ is contained in the thin space $\R^{n-1}$, as we did in \emph{Step 2} in Lemma~\ref{lem:quasi-concave-const-thin} we can apply Hopf's Lemma to $v^*$ and use $\p_{x_n}v$ is bounded in a small ball $B_{\rho_0}$ to get $E\neq\emptyset$. Now we assume towards a contradiction $t_0:=\sup E<1$. We then claim that $v_{t_0}(x^0)=v^*(x^0)$ for some point $x^0\in\overline{D^*}\setminus K$. Indeed, if the claim is not true, then $v_{t_0}>v^*$ in $\overline{D^*}\setminus K$. Since $\{x\in D^*:v_t(x)<v^*(x)\}$ is nonempty for every $t_0<t<1$, if we take a connected component $A_t$ of $\{x\in D^*:v_t<v^*\}$, then $A_t$ converges to a nonempty set $A\subset\overline{D^*}$ as $t\to t_0$. Since $v_{t_0}\le v^*$ in $A$ and $v_{t_0}>v^*$ in $\overline{D^*}\setminus K$, we have $A\subset K$. Again, we argue as in \emph{Step 2} in Lemma~\ref{lem:quasi-concave-const-thin} to get a contradiction, and prove the claim.

Now we can divide the proof into two cases 
\begin{align*}
    &\mathbf{A}.\ x^0\in(D^*\cap D_{t_0})\setminus K,\\
    &\mathbf{B}.\ x^0\in \p D^*\cap\p D_{t_0},
\end{align*}
and repeat the argument in \emph{Step 2} in Lemma~\ref{lem:quasi-concave-const} to arrive at a contradiction and get $E=1$. This completes the proof.
\end{proof}

\begin{lemma}
\label{lem:quasi-concave-const-hyb-p}
Let $K$, $H$ be as in Lemma~\ref{lem:quasi-concave-const-p-thin}. Then there exists a minimizer of 
$$
J_H(w,\R^n):=\int_{\R^n}\left(|\D w|^p+pH(w)\right)
$$
over $\hat{\cK}_K:=\{w\in W^{1,p}(\R^n):w=1\text{ on }K\}$, which is compactly-supported, nonnegative and quasi-concave.
\end{lemma}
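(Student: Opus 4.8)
The plan is to run the energy-minimization scheme of Lemma~\ref{lem:quasi-concave-const-p} on all of $\R^n$ — this being the $p$-Laplacian counterpart of Lemma~\ref{lem:quasi-concave-const-hyb} — while importing from Lemma~\ref{lem:quasi-concave-const-p-thin} the one ingredient forced by $K\subset\R^{n-1}$ having $n$-dimensional measure zero, namely Hopf's lemma at the thin set $K$. I shall only indicate the changes. For $\e\in(0,1)$ I minimize
\[
J^\e_H(w,\R^n):=\int_{\R^n}\Bigl(\bigl(\e+|\D w|^2\bigr)^{p/2}-\e^{p/2}+pH(w)\Bigr)
\]
over $\hat{\cK}_K$; this class is nonempty (an $(n-1)$-dimensional convex body has positive $p$-capacity for $1<p<\infty$) and contains compactly supported competitors of finite energy, so lower semicontinuity produces a minimizer $v^\e$. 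Truncation as in Lemma~\ref{lem:quasi-concave-const-p} (using that $H$ is nondecreasing and vanishes on $(-\infty,0)$) gives $0\le v^\e\le1$, and $v^\e$ solves $\ddiv\bigl[(\e+|\D v^\e|^2)^{p/2-1}\D v^\e\bigr]=h(v^\e)$ in $\R^n\setminus K$, equivalently $a^{ij}(\D v^\e)v^\e_{x_ix_j}=h(v^\e)$ with the ellipticity \eqref{eq:ellip}, so for each fixed $\e$ the linear operator $I_{v^\e}$ is uniformly elliptic on $\R^n$. Since $h(v^\e)\ge c_2\chi_{\{v^\e>0\}}$ for some $c_2>0$ and $|\D v^\e|$ is bounded on $\{|x_n|>1\}$ by \cite{ChoLew91}, the barrier $v^\e(x^0)+\delta|x-x^0|^2$ of \emph{Step 1} of Lemma~\ref{lem:quasi-concave-const-p} forces $\sup_{\{v^\e>0\}\cap\partial B_R(x^0)}v^\e\ge\delta R^2$, contradicting $v^\e\le1$ unless $\{v^\e>0\}$ is bounded.

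Next I show that each $v^\e$ is quasi-concave. By Proposition~\ref{prop:v*-subsol-p} the quasi-concave envelope $(v^\e)^*$ satisfies $I_{v^\e}(D^2(v^\e)^*)\ge h((v^\e)^*)$ in $\R^n\setminus K$ with $(v^\e)^*=1$ on $K^{\mathrm{o}}$, so it suffices to prove $v^\e\equiv(v^\e)^*$. I run the Lavrentiev principle as in \emph{Step 2} of Lemma~\ref{lem:quasi-concave-const}: assuming $0\in K$, set $v^\e_t(x):=v^\e(tx)$ and $E^\e:=\{0<t<1\,:\,v^\e_t\ge(v^\e)^*\ \text{on}\ \{(v^\e)^*>0\}\}$, and aim for $\sup E^\e=1$. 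The only point that differs from Lemma~\ref{lem:quasi-concave-const-p} is that $E^\e\ne\emptyset$ is no longer trivial, since $t^{-1}K$ is still $n$-null so $v^\e_t$ is not identically $1$ on any $n$-dimensional neighbourhood of $K$; instead one argues as in Lemma~\ref{lem:quasi-concave-const-p-thin}: the strong maximum principle gives $(v^\e)^*<1$ in $(\R^n\setminus K)$ near $K$, Hopf's lemma at $K^{\mathrm{o}}$ (whose flat boundary satisfies a uniform interior sphere condition from both sides) gives $(v^\e)^*(x)\le1-c_1|x_n|$ near $K$, and $\partial_{x_n}v^\e$ is bounded in a small ball about a point of $K^{\mathrm{o}}$, whence $v^\e(tx)\ge1-c_2t|x_n|\ge(v^\e)^*(x)$ near $K$ for $t$ small. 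If $t_0:=\sup E^\e<1$ one then locates a touching point $x^0\in\overline{\{(v^\e)^*>0\}}\setminus K$ with $v^\e_{t_0}(x^0)=(v^\e)^*(x^0)$ and treats the cases $x^0\in(\{v^\e_{t_0}>0\}\cap\{(v^\e)^*>0\})\setminus K$ and $x^0\in\partial\{v^\e_{t_0}>0\}\cap\partial\{(v^\e)^*>0\}$ exactly as in Lemma~\ref{lem:quasi-concave-const}, deriving $I_{v^\e}(D^2(v^\e_{t_0}-(v^\e)^*))<0$ near $x^0$ and contradicting the strong minimum principle. Hence $v^\e$ is quasi-concave.

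It remains to pass to the limit $\e\to0$. The uniform $C^{1,\al}_{\loc}(\R^n\setminus K)$ bounds of \cite{ChoLew91} and $0\le v^\e\le1$ give, along a subsequence, $v^\e\to v$ in $C^1_{\loc}(\R^n\setminus K)$ with $v$ nonnegative and quasi-concave. As in \emph{Step 2} of Lemma~\ref{lem:quasi-concave-const-p}, Fatou and dominated convergence give $J_H(v,B_R)\le\liminf_{\e\to0}J^\e_H(v^\e,\R^n)\le\liminf_{\e\to0}J^\e_H(w,\R^n)=J_H(w,\R^n)$ for compactly supported $w\in\hat{\cK}_K$; letting $R\to\infty$ by monotone convergence, then discarding the compact-support assumption with the cut-offs $\phi_m$ of \eqref{eq:cut-off-ftn} and using that $H$ is nondecreasing, shows $v$ minimizes $J_H(\cdot,\R^n)$ over $\hat{\cK}_K$, while a weak $W^{1,p}(B_R)$ limit preserves $v=1$ on $K$, so $v\in\hat{\cK}_K$. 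Since $0\le v\le1$ forces $\Delta_pv=h(v)\ge c_1v^b$ in $\{v>0\}$, the non-degeneracy estimate $\sup_{B_R(x^0)\cap\{v>0\}}v\ge c_0R^{p/(p-1-b)}$, proved via the auxiliary power $v^{1-b/(p-1)}$ exactly as in \emph{Step 3} of Lemma~\ref{lem:quasi-concave-const-p} (here $b<p-1$), contradicts $v\le1$ unless $\{v>0\}$ is bounded; thus $v$ is compactly supported.

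The main obstacle is, as in the thin-obstacle lemmas, the nonemptiness of $E^\e$: the drop in dimension of $K$ destroys the elementary reason ($v^\e_t\equiv1$ near $K$) that makes $E^\e\ne\emptyset$ immediate when $K$ has interior, and one must instead quantify the transversal linear decay of both $v^\e$ and its envelope $(v^\e)^*$ away from $\R^{n-1}$, which is exactly where Hopf's lemma and the interior gradient bound enter. Everything else is a routine synthesis of Lemmas~\ref{lem:quasi-concave-const-p}, \ref{lem:quasi-concave-const-p-thin} and \ref{lem:quasi-concave-const-hyb}; the $\e$-regularization is retained only to make $J_H$ finite over the unbounded $\R^n$ and to render $I_{v^\e}$ uniformly elliptic.
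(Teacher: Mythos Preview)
Your proposal is correct and follows essentially the same approach as the paper's own (very terse) proof: minimize the $\e$-regularized functional $J^\e_H$ over $\hat{\cK}_K$, use the gradient bound away from $K$ for compact support of $v^\e$, invoke Hopf's lemma at the thin set $K$ (as in Lemma~\ref{lem:quasi-concave-const-p-thin}) to get $E^\e\neq\emptyset$ for the Lavrentiev argument, and then pass to the limit $\e\to0$ via \emph{Steps 2--3} of Lemma~\ref{lem:quasi-concave-const-p}. One small imprecision: $I_{v^\e}$ is not uniformly elliptic on all of $\R^n$ since $|\D v^\e|$ may blow up near $\partial K$, only on compact subsets of $\R^n\setminus K$ (as the paper notes by fixing $\tilde K\supset K$); this does not affect your argument, since you only use the ellipticity where the gradient is bounded.
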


\begin{proof}
The proof can be obtained by repeating the argument in Lemma~\ref{lem:quasi-concave-const-p}, thus we briefly sketch the proof.

For $0<\e<1$, let $v^\e$ be an energy minimizer of $$
J_H^\e(w,\R^n):=\int_{\R^n}\left(\left(\e+|\D w|^2\right)^{p/2}-\e^{p/2}+pH(w)\right)
$$
over $\hat{\cK}_K$. Clearly, $v^\e$ solves
\begin{align}
    \label{eq:sol-const-hyb-p}
    \begin{cases}
    \ddiv\left[\left(\e+|\D v^\e|^2\right)^{p/2-1}\D v^\e\right]=h(v^\e)&\text{in }\R^n\setminus K,\\
    v^\e=1&\text{on }K.
    \end{cases}
\end{align}
Fix a compact set $\tilde K\subset\R^n$ strictly larger than $K$. Then $\sup_{\R^n\setminus\tilde K}|\D v^\e|<\infty$, thus the first equation in \eqref{eq:sol-const-hyb-p} is an uniformly elliptic equation in $\R^n\setminus\tilde K$. With this at hand, we can argue as in Lemma~\ref{lem:quasi-concave-const-p} to prove that $v^\e$ has a compact support. Besides, the quasi-concavity of $v^\e$ can be derived by following the line of the proof for the corresponding result in Lemma~\ref{lem:quasi-concave-const-p-thin}. Finally, we can proceed as in \emph{Step 2-3} in Lemma~\ref{lem:quasi-concave-const-p} to complete the proof.
\end{proof}

We finish this section with a formal proof of our last main result.

\begin{proof}[Proof of Theorem~\ref{thm:quasi-concave-hyb}] 
With Lemma~\ref{lem:quasi-concave-const-hyb} and Lemma~\ref{lem:quasi-concave-const-hyb-p} at hand, the proof follows from repeating the argument in Theorem~\ref{thm:quasi-concave-thin}.
\end{proof}

%%%%%%%%%%%%%%%%%%%%%%%%%%%%%%%%%%%%%%%%%%%%%

\appendix

\section{Property of quasi-concave envelopes}\label{appen:quasi-concave-subsol}

In this appendix, we show that quasi-concave envelopes of regularized problems are subsolutions. They follow from applying the results and proofs in \cite{BiaLonSal09} to our settings. The fully nonlinear case is treated in Proposition~\ref{prop:v*-subsol}, and the $p$-Laplacian one can be found in Proposition~\ref{prop:v*-subsol-p}.

\begin{proposition}\label{prop:v*-subsol}
If $v$ is a solution of \eqref{eq:sol-const}, then its quasi-concave envelope $v^*$ is a subsolution of \eqref{eq:sol-const}. Similarly, if $v$ is a solution to \eqref{eq:sol-const-thin}, then $v^*$ is a subsolution for \eqref{eq:sol-const-thin}.
\end{proposition}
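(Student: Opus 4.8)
The plan is to show that the super-level sets of $v^*$ are convex and that $v^*$ is a viscosity subsolution of $F(D^2 v^*)=h(v^*)$ in the relevant domain, with the correct boundary data. The key structural fact, taken from \cite{BiaLonSal09}, is the following representation of the quasi-concave envelope at points of its graph: for $x^0$ in the positivity set of $v^*$ with $v^*(x^0)=\ell>0$, either $v^*$ coincides with $v$ near $x^0$, or there exist points $x^1,\dots,x^m\in\{v=\ell\}$, $m\le n+1$, and weights $\theta_i>0$ with $\sum\theta_i=1$, $\sum\theta_i x^i=x^0$, such that any test function $\varphi$ touching $v^*$ from above at $x^0$ also touches $v$ from above at each $x^i$ (after the usual reduction one may even take a single affine piece, i.e. $m\le n$ and a supporting hyperplane argument as used in \emph{Step 2} of Lemma~\ref{lem:quasi-concave-const}). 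This is exactly the input that lets one transfer the subsolution property from $v$ to $v^*$.

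First I would fix $\ell>0$ and a point $x^0\in\{v^*>0\}$ together with a $C^2$ test function $\varphi\ge v^*$ with $\varphi(x^0)=v^*(x^0)=\ell$. Using the representation above, at each $x^i$ the function $\varphi$ (appropriately translated) touches $v$ from above, so since $v$ solves $F(D^2v)=h(v)$ we get $F(D^2\varphi(x^0))\ge h(\ell)$ in the one-point case, and in the genuinely convex-combination case one uses concavity and $1$-homogeneity of $F$ together with the fact that $D^2\varphi(x^0)$ dominates a convex combination of the Hessians of the touching functions at the $x^i$ to conclude $F(D^2\varphi(x^0))\ge \sum\theta_i F(D^2\varphi(x^0))\ge \sum \theta_i h(v(x^i)) = h(\ell)$, using $v(x^i)=\ell=v^*(x^0)$. (Here the concavity assumption \eqref{eq:assump-fully-nonlinear} on $F$ is essential; this is why the hypothesis is imposed.) Hence $F(D^2 v^*)\ge h(v^*)$ in the viscosity sense on $\{v^*>0\}$, and trivially outside it since $h\ge 0$ and $v^*$ is quasi-concave hence its super-level sets are convex, so $v^*$ is a subsolution in $\R^n\setminus K$ (resp. $\R^n_+$). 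The boundary condition $v^*=1$ on $\partial K$ (resp. $v^*=1$ on $K^{\mathrm o}$, $v^*=0$ on $\R^{n-1}\setminus K$) follows since $v\le v^*\le \|v\|_{L^\infty}=1$ forces $v^*=1$ wherever $v=1$, and for \eqref{eq:sol-const-thin} the convexity of super-level sets of $v^*$ together with $v^*=0$ outside the convex set $\{v>0\}\cap\R^{n-1}\subset K$ gives $v^*=0$ on $\R^{n-1}\setminus K$.

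The main obstacle is making rigorous the passage ``$\varphi$ touches $v^*$ from above at $x^0$ $\Rightarrow$ $\varphi$ (translated) touches $v$ from above at each $x^i$ with control on the Hessians,'' especially when the optimal convex combination has $m\ge 2$ active points: one must verify that the contact is genuinely second-order at each $x^i$ and that the inequality $D^2\varphi(x^0)\ge \sum_i\theta_i A_i$ holds for the relevant touching matrices $A_i$ — this is the content of the lemmas in \cite{BiaLonSal09} and I would invoke those verbatim, checking only that our operator $F$ meets their hypotheses (uniform ellipticity, concavity, $F(0)=0$, $1$-homogeneity — all in \eqref{eq:assump-fully-nonlinear}) and that $h$ being nonnegative, bounded and Lipschitz poses no difficulty. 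The thin case \eqref{eq:sol-const-thin} needs the minor extra remark that the envelope is taken over $\overline{\R^n_+}$ and that points $x^i$ on $\R^{n-1}$ where $v=0<\ell$ cannot arise in the representation of a point with $v^*=\ell>0$, so all active points lie in $\R^n_+$ where the equation holds; the boundary behaviour on $\R^{n-1}$ is then handled separately by the convexity argument above.
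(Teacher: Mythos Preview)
Your proposal follows the same broad route as the paper --- invoke the machinery of \cite{BiaLonSal09} to transfer the subsolution property from $v$ to its quasi-concave envelope $v^*$ --- but it misses the one nontrivial point the paper's proof actually has to address.

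The results in \cite{BiaLonSal09} (their Theorem~3.1 and the lemmas behind it) require, among other structural conditions, the nondegeneracy hypothesis $|\nabla v|>0$ throughout the domain (this is condition (C5) in the paper's account). Your representation ``there exist $x^1,\dots,x^m\in\{v=\ell\}$ with $\sum\theta_ix^i=x^0$ such that a test function touching $v^*$ from above at $x^0$ also touches $v$ from above at each $x^i$'' is exactly what relies on this: one needs to know that $\bar x\in\partial\{v^*\ge\ell\}$ (not in the interior of the level set), that the active points $x^i$ lie on $\partial\{v\ge\ell\}$ with $|\nabla v(x^i)|>0$, and that there is a common normal direction. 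For a general solution of \eqref{eq:sol-const} one cannot assume $|\nabla v|\neq 0$ everywhere, and you do not address this. The paper's proof earns its keep precisely here: it argues that even without (C5), the three properties (P1)--(P3) needed in the \cite{BiaLonSal09} argument still hold, because $v$ is a \emph{solution} (not just a subsolution), so $F(D^2v)=h(v)>0$ rules out interior maxima, and supporting hyperplanes at level sets give interior sphere conditions, whence Hopf's Lemma supplies $|\nabla v(x^i)|>0$ at the relevant points. This is the substantive step you are missing.

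A secondary imprecision: the Hessian inequality in the multi-point case is not simply ``$D^2\varphi(x^0)$ dominates a convex combination of the $A_i$''. In \cite{BiaLonSal09} the correct inequality involves the gradient magnitudes, and the structural hypothesis one must verify is that $\Phi_{t,\alpha}(q,A):=q^\alpha\bigl(F(A/q^3)-h(t)\bigr)$ is concave in $(q,A)$ for some $\alpha$. The paper checks this with $\alpha=3$, using $1$-homogeneity of $F$ to get $\Phi_{t,3}(q,A)=F(A)-h(t)q^3$, concave since $F$ is concave and $h\ge 0$. Your sentence ``checking only that our operator $F$ meets their hypotheses (uniform ellipticity, concavity, $F(0)=0$, $1$-homogeneity)'' glosses over this specific verification; it is easy once stated, but it is the reason $1$-homogeneity appears in \eqref{eq:assump-fully-nonlinear} and should be made explicit.
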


\begin{proof}
Proposition ~\ref{prop:v*-subsol} follows by applying Theorem~3.1 in \cite{BiaLonSal09}. Indeed, when $v$ is a solution to \eqref{eq:sol-const}, the theorem can be rephrased in our favor as the following: if $v$ is a solution of $\tilde F(v,D^2v)=0$ in $D\setminus K$, where $\tilde F(v,A):=F(A)-h(v)$ and $D=\{v>0\}$, then the quasi-concave envelope $v^*$ is a subsolution to $\tilde F(v,D^2v)=0$, provided the conditions $(C1)-(C5)$ below is satisfied
\begin{align*}
    &(C1)\quad \tilde{F}(v,A)\text{ is proper (i.e., }\tilde{F}(s,A)\le\tilde{F}(t,A)\text{ whenever } s\ge t),\\
    &(C2)\quad D\setminus K \text{ is a convex ring (i.e., }D\text{ and }K\text{ are convex)},\\
    &(C3)\quad \tilde F\text{ is continuous and degenerate elliptic in }(0,1)\times S(n)\\
    &\quad \qquad\text{(i.e., $\tilde F(v,A)\ge\tilde F(v,B)$ when $A\ge B$)},\\
    &(C4)\quad \text{For some $\al\in \R$ and for any fixed $t\in(t_0,t_1)$, the function $\Phi_{t,\al}(q,A):=q^\al\tilde F(t,\frac A{q^3})$}\\ 
    &\quad \qquad \text{is concave in $(0,\infty)\times S(n)$, }\\
    &(C5)\quad |\D v|>0\text{ in }D\setminus K.
\end{align*}
In fact, $(C1)-(C5)$ are assumed throughout theorems in \cite{BiaLonSal09}, and one can see from the proof of Thoerem~3.1 in \cite{BiaLonSal09} that it still holds without $(C1)-(C2)$. Moreover, the condition $(C3)$ simply follows from the properties of $F$ and $h$. For $(C4)$, we take $\al=3$ and see $\Phi_{t,3}(q,A)=q^3\tilde F(t,\frac{A}{q^3})=F(A)-h(t)q^3$ is concave as $F$ is concave and $h(t)\ge0$.

Now we will show that the theorem is still true without the condition $(C5)$ with small modification in its proof, since our function $v$ is a solution and hence admits the maximum principle. To see where $(C5)$ is used in \cite{BiaLonSal09}, let $\bar x\in D\setminus K$ such that $v^*(\bar x)>v(\bar x)$ and let $t=v^*(\bar x)$. Then for $D(t):=\{x:v(x)\ge t\}$ there exist $\la_i\in(0,1)$ and $x^i \in\p D(t)$, $1\le i\le n$, with $\Sigma_{i=1}^n\la_i=1$ such that $$
\bar x=\Sigma_{i=1}^n\la_ix^i \quad\text{and}\quad v^*(\bar x)=v(x^i)=t.
$$
Here, the condition $|\D v|>0$ is used in Theorem~3.1 in \cite{BiaLonSal09} to have the following:
    \begin{align*}
        & (P1)\quad t\in(0,1),\\
        &(P2)\quad \bar x\in\p D^*(t), \text{ where } D^*(t)=\{x:v^*(x)\ge t\},\\
        &(P3)\quad |\D v(x^i )|>0,\quad 1\le i\le n.
    \end{align*}
We claim that all of these three properties hold for the solution $v$ of \eqref{eq:sol-const} without the assumption $|\D v|>0$.

Indeed,  $(P1)$  follows from the definition $D=\{v>0\}$ and the fact that $v$ cannot
have a local maximum in $D\setminus K$, for $F(D^2v)=f(v)>0$ in $D\setminus K$.

$(P2)$ and $(P3)$ can be obtained by applying Proposition~1 and Proposition~2 in \cite{GreKaw09}, respectively. Indeed, these propositions require the following nondegeneracy property: \emph{If the level set $D(t)$, for $\inf_{D\setminus K}v<t<\sup_{D\setminus K}v$, has a supporting hyperplane passing through some point $x\in D(t)$, then $\D v(x)\neq0$.} This property holds for our solution $v$, since the supporting hyperplane guarantees that $\{v<t\}$ satisfies interior sphere condition at $x$ and thus Hopf's Lemma can be applied.

Next, we consider the second case when $v$ is a solution of \eqref{eq:sol-const-thin}. Recall that conditions $(C1)-(C2)$ are not essential. Moreover, $(C3)-(C4)$ and $(P1)-(P3)$ can be justified by the similar way as in the first case. This completes the proof.
\end{proof}

\begin{proposition}
\label{prop:v*-subsol-p}
If $v^\e$ is a solution of \eqref{eq:sol-const-p}, then its quasi-concave envelope $(v^\e)^*$ is a subsolution of \eqref{eq:const-p-2nd-subsol}. Similarly, if $v^\e$ solves \eqref{eq:sol-const-p-thin}, then $(v^\e)^*$ is a subsolution of \eqref{eq:const-p-2nd-subsol-thin}.
\end{proposition}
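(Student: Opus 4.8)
**Proof proposal for Proposition~\ref{prop:v*-subsol-p}.**

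The plan is to mirror the structure of the proof of Proposition~\ref{prop:v*-subsol}, replacing the fully nonlinear operator $F$ by the linear-in-Hessian operator $I_{v^\e}$ associated with the minimizer $v^\e$. Recall that the first equation in \eqref{eq:sol-const-p} reads $a^{ij}(\D v^\e)\,v^\e_{x_ix_j}=h(v^\e)$ with $a^{ij}(z)=(\e+|z|^2)^{p/2-2}[(p-2)z_iz_j+\delta_{ij}(\e+|z|^2)]$, so that setting $\tilde F(s,A):=I_{v^\e}(A)-h(s)=a^{ij}(\D v^\e)A_{ij}-h(s)$, the function $v^\e$ solves $\tilde F(v^\e,D^2v^\e)=0$ in $D\setminus K$ where $D=\{v^\e>0\}$. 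I would first verify that this $\tilde F$ satisfies the structural conditions $(C1)$--$(C4)$ from the proof of Proposition~\ref{prop:v*-subsol}: properness $(C1)$ is immediate since $h$ is nondecreasing; $(C2)$ is again inessential (as already noted, Theorem~3.1 in \cite{BiaLonSal09} does not actually need the convex-ring hypothesis); degenerate ellipticity $(C3)$ follows from the ellipticity bounds \eqref{eq:ellip}, namely $\mu_1(\e+|z|^2)^{p/2-1}|\xi|^2\le a^{ij}(z)\xi_i\xi_j\le \mu_2(\e+|z|^2)^{p/2-1}|\xi|^2$, which make $A\mapsto I_{v^\e}(A)$ monotone; and $(C4)$ holds with $\al=3$, since $\Phi_{t,3}(q,A)=q^3\tilde F(t,A/q^3)=I_{v^\e}(A)-h(t)q^3$ is concave because $I_{v^\e}$ is linear (hence concave) in $A$ and $h(t)\ge 0$.

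The remaining ingredient is to remove the nondegeneracy assumption $(C5)$, exactly as was done in the fully nonlinear case. I would establish the three properties $(P1)$--$(P3)$ for the solution $v^\e$: the point $(P1)$ that the relevant level $t\in(0,1)$ follows from $D=\{v^\e>0\}$ together with the fact that $v^\e$ admits no interior local maximum in $D\setminus K$, since $I_{v^\e}(D^2v^\e)=h(v^\e)>0$ there (the strong maximum principle for the uniformly elliptic operator $I_{v^\e}$, valid once we restrict to a compact piece of $D\setminus K$ where $|\D v^\e|$ is bounded, or more directly from the fact that a solution of a linear elliptic equation with positive right-hand side cannot attain an interior maximum); and $(P2)$, $(P3)$ follow by invoking Proposition~1 and Proposition~2 in \cite{GreKaw09} respectively, whose hypothesis is the nondegeneracy statement that whenever a level set $D(t)$ has a supporting hyperplane through a point $x\in D(t)$ then $\D v^\e(x)\ne 0$; this in turn follows because the supporting hyperplane forces $\{v^\e<t\}$ to satisfy an interior sphere condition at $x$, so Hopf's Lemma applied to the uniformly elliptic equation $I_{v^\e}(D^2v^\e)=h(v^\e)\ge 0$ yields a strictly negative inward normal derivative. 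With $(C1)$--$(C4)$ and $(P1)$--$(P3)$ in hand, Theorem~3.1 of \cite{BiaLonSal09} (in the form already extracted in the proof of Proposition~\ref{prop:v*-subsol}) gives that $(v^\e)^*$ is a viscosity subsolution of $I_{v^\e}(D^2w)-h(w)=0$, i.e. $I_{v^\e}(D^2(v^\e)^*)\ge h((v^\e)^*)$ in $\R^n\setminus K$, and trivially $(v^\e)^*=1$ on $\p K$; this is precisely \eqref{eq:const-p-2nd-subsol}.

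For the thin case, where $v^\e$ solves \eqref{eq:sol-const-p-thin}, the argument is identical: the same operator $I_{v^\e}$ and the same $\tilde F$ arise, conditions $(C1)$--$(C4)$ are checked verbatim, and $(P1)$--$(P3)$ go through as before — with the only adjustment that near the flat part $\R^{n-1}\setminus K$ of the boundary one uses the boundary regularity of $v^\e$ and the uniform interior sphere condition of the halfspace, just as in \emph{Step 2} of Lemma~\ref{lem:quasi-concave-const-thin}, to guarantee the nondegeneracy needed for $(P2)$--$(P3)$. One then concludes that $(v^\e)^*$ is a subsolution of \eqref{eq:const-p-2nd-subsol-thin}. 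The main obstacle I anticipate is the careful bookkeeping of $(P1)$--$(P3)$: in particular, ensuring that Hopf's Lemma and the strong maximum principle are applied to $I_{v^\e}$ only on regions where, thanks to \cite{ChoLew91} and the gradient bound $\sup|\D v^\e|<\infty$ away from $K$, the coefficients $a^{ij}(\D v^\e)$ are genuinely uniformly elliptic and bounded, so that the cited classical tools legitimately apply; away from that, everything is a transcription of the fully nonlinear argument.
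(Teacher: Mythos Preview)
Your proposal is correct and follows essentially the same route as the paper: reduce to the setting of Proposition~\ref{prop:v*-subsol} by observing that $I_{v^\e}$ satisfies \eqref{eq:assump-fully-nonlinear} on compact subsets of $\R^n\setminus K$, check $(C4)$ with $\al=3$ via linearity of $I_{v^\e}$ and $h\ge 0$, and note the remaining conditions and the replacement of $(C5)$ by $(P1)$--$(P3)$ carry over verbatim. One small slip: $h$ is not assumed nondecreasing in \eqref{eq:assump-h}, so your justification of $(C1)$ is off --- but this is harmless, since (as you and the paper both note) $(C1)$ is not actually needed for Theorem~3.1 in \cite{BiaLonSal09}.
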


\begin{proof}
We only consider the case when $v^\e$ is a solution of \eqref{eq:sol-const-p}, as the other case can be treated in a similar way.

Since the operator $I_{v^\e}(B)=a^{ij}(\D v^\e)b_{ij}$, defined in the proof of Lemma~\ref{lem:quasi-concave-const-p},  satisfies  condition \eqref{eq:assump-fully-nonlinear} in every compact subset of $\R^n\setminus K$, we can prove that $(v^\e)^*$ is a subsolution by repeating the proof of Proposition~\ref{prop:v*-subsol}. We only need to check  condition $(C4)$ in Proposition~\ref{prop:v*-subsol} holds for $F_{v^\e}$. To prove it, let $\tilde I_{v^\e}(t,B):=a^{ij}(\D v^\e)b_{ij}-h(t)$, $B=(b_{ij})_{n\times n}$, $t\in(0,1)$, and define $\Phi_{t,\al}:=q^\al\tilde I_{v^\e}(t,\frac{B}{q^3})$ for some $\al\in \R$. If we take $\al=3$, then $\Phi_{t,3}(q,B)=q^3\left[a^{ij}(\D v^\e)\frac{b_{ij}}{q^3}-h(t)\right]=a^{ij}(\D v^\e)b_{ij}-h(t)q^3$ is a concave function, as desired.
\end{proof}

%%%%%%%%%%%%%%%%%%%%%%%%%%%%%%%%%%%%%%%%%%%%%%

\section{Regularity of solutions and minimizers}\label{appen:grad-holder}

In this section we establish the local $C^{1,\al}$-regularity of solutions and energy minimizers in Lemma~\ref{lem:quasi-concave-const}-\ref{lem:quasi-concave-const-hyb-p}. We first obtain the regularity result when the operator $L$ is the fully nonlinear operator $F$.

\begin{theorem}
\label{thm:grad-holder-F}
For fixed $-1<a<0$ and $C_1>0$, let $\hat g:\R\to\R$ be a function satisfying $\hat g=0$ on $(-\infty,0]$ and $0\le \hat g(t)\le C_1t^a$ for $0<t<\infty$. Let $\Omega\subset\R^n$ be a bounded open set and $\be:=\frac2{1-a}\in(1,2)$. If $u$ is a nonnegative solution of $$
F(D^2u)=\hat g(u)\quad\text{in }\Omega,
$$
then $u\in C^{1,\be-1}_{\loc}(\Omega)$. Moreover, for every $\Omega'\Subset\Omega$ $$
\sup_{B_r(x)\cap\Omega'}u\le C(r^\be+u(x))\quad\text{for any }x\in A,\,r>0,
$$
where $C>0$ is a constant depending only on $n$, $a$, $\la$, $\Lambda$, $\Omega'$, but independent of $u$.
\end{theorem}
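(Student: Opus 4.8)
The plan is to follow the strategy of Ara\'ujo--Teixeira \cite{AraTei13}: first reduce the statement to a pointwise growth control at free boundary points, then prove that growth by a scaling/compactness (dichotomy) argument tuned to the exponent $\be=\tfrac2{1-a}$, and finally recover $C^{1,\be-1}_{\loc}$-regularity by patching with interior estimates in the positivity set.

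\emph{Step 1: reduction.} It suffices to show that for every $\Omega'\Subset\Omega$ there are $C,r_0>0$, depending only on $n,a,\la,\Lambda,\Omega'$, with
\[
\sup_{B_r(x_0)}u\le C\,r^\be\qquad\text{for every }x_0\in\p\{u>0\}\cap\overline{\Omega'}\text{ and }0<r<r_0 .
\]
Granting this, fix $x\in\Omega'$ and set $d:=\dist(x,\p\{u>0\})$. If $r<d/2$ then $B_d(x)\subset\{u>0\}$, where $\hat g(u)$ is locally bounded (since $u$ is continuous, hence bounded and bounded away from $0$ there), so the Harnack inequality for $F(D^2u)=\hat g(u)$ gives $\sup_{B_r(x)}u\le\sup_{B_{d/2}(x)}u\le C\,u(x)$. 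If $r\ge d/2$, picking $x_0\in\p\{u>0\}$ with $|x-x_0|=d$ gives $\sup_{B_r(x)}u\le\sup_{B_{3r}(x_0)}u\le C\,r^\be$. In both cases $\sup_{B_r(x)\cap\Omega'}u\le C(r^\be+u(x))$. The $C^{1,\be-1}_{\loc}$-regularity then follows from the standard patching: in $\{u>0\}$ the function $u$ is locally $C^{1,\al}$ (in fact $C^{2,\al}$ by Evans--Krylov, $F$ being concave and $\hat g(u)$ locally bounded there), and rescaling the interior estimate on $B_{d/2}(x)$ to unit scale while using $\sup_{B_{2d}(x_0)}u\le C(2d)^\be$ at the nearest free boundary point $x_0$ yields a uniform bound for $[u]_{C^{1,\be-1}(B_{d/4}(x))}$; since $x\in\Omega'$ was arbitrary, $u\in C^{1,\be-1}_{\loc}(\Omega)$.

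\emph{Step 2: growth at free boundary points.} The exponent $\be=\tfrac2{1-a}$ is exactly the one making the equation class scale invariant: if $u$ solves the equation and $x_0\in\p\{u>0\}$, then $v(y):=r^{-\be}u(x_0+ry)$ solves $F(D^2v)=\hat g_r(v)$ with $\hat g_r(t):=r^{\,2-\be}\hat g(r^\be t)$, and $0\le\hat g_r(t)\le C_1 t^a$ because $2-\be+\be a=2-\be(1-a)=0$. Suppose the asserted bound fails. Then a De Giorgi--Caffarelli doubling (extremal--radius) selection produces free boundary points $x_j$, radii $r_j\to0$, normalising constants $\theta_j:=r_j^{-\be}\sup_{B_{r_j}(x_j)}u\to\infty$, and rescalings $v_j(y):=(r_j^\be\theta_j)^{-1}u(x_j+r_jy)$ with $v_j\ge0$, $v_j(0)=0$, $\sup_{B_1}v_j=1$, $\sup_{B_\mu}v_j>\mu^\be$ for a fixed $\mu\in(0,\tfrac14]$, which in addition satisfy the uniform growth control $\sup_{B_\rho(z)}v_j\le C\rho^\be$ at every free boundary point $z$ of $v_j$, and solve $F(D^2v_j)=f_j$ with $0\le f_j\le C_1\theta_j^{\,a-1}v_j^{\,a}$. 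Here the hypothesis $a>-1$ is used: it is equivalent to $\be a>-1$, and together with the growth control it keeps the family $f_j$ bounded in $L^n(B_1)$ while $\theta_j^{\,a-1}\to0$, so $\|f_j\|_{L^n(B_1)}\to0$. Now $v_j$ is a nonnegative viscosity supersolution of $\cM^-_{\la,\Lambda}(D^2v_j)\le f_j$ with $\inf_{B_{1/2}}v_j\le v_j(0)=0$, so the weak Harnack inequality gives $\|v_j\|_{L^{p_0}(B_{1/2})}\le C\big(\inf_{B_{1/2}}v_j+\|f_j\|_{L^n(B_1)}\big)\to0$; and since $f_j\ge0$, $v_j$ is a subsolution of $\cM^+_{\la,\Lambda}(D^2v_j)\ge0$, so the local maximum principle yields $\sup_{B_\mu}v_j\le\sup_{B_{1/4}}v_j\le C\|v_j\|_{L^{p_0}(B_{1/2})}\to0$, contradicting $\sup_{B_\mu}v_j>\mu^\be$. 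Equivalently, one may pass to the limit: the $v_j$ converge locally uniformly to a global nonnegative subsolution of $F(D^2v_\infty)=0$ which is $F$-harmonic on $\{v_\infty>0\}$, has at most $\be$-growth ($\be<2$) and vanishes at the origin, and a Liouville/rigidity argument forces $v_\infty\equiv0$, the same contradiction. This proves the growth estimate, hence the theorem.

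\emph{The main obstacle} is the singular right-hand side: unlike the classical obstacle problem (bounded r.h.s.), $\hat g(u)\lesssim u^a$ blows up along $\p\{u>0\}$, so the compactness step in Step~2 is the delicate point. The scheme closes only because $\be=\tfrac2{1-a}$ renders the equation class scale invariant and because $-1<a<0$ (and not merely $a<1$) guarantees that the rescaled right-hand sides stay bounded in $L^n$ through the blow-up; this is precisely why the lower bound $a>-1$ cannot be dropped in the regularity statement.
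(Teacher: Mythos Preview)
Your overall strategy (reduce to growth at free boundary points, prove that growth by a blow-up argument, then patch to get $C^{1,\be-1}$) is sound and is in the spirit of \cite{AraTei13}. However, there is a genuine gap in the way you handle the singular right-hand side, and this gap is precisely what the paper's proof addresses by a different device.

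The problem appears twice. In Step~2 you assert that the rescaled forcings $f_j$, which satisfy $0\le f_j\le C_1\theta_j^{\,a-1}v_j^{\,a}$ on $\{v_j>0\}$, are bounded in $L^n(B_1)$, invoking ``$\be a>-1$ together with the growth control''. But the growth control $\sup_{B_\rho(z)}v_j\le C\rho^\be$ is an \emph{upper} bound on $v_j$; since $a<0$ it produces a \emph{lower} bound on $v_j^{\,a}$, not the upper bound you need. Controlling $\|v_j^{\,a}\|_{L^n}$ from above would require a nondegeneracy estimate $v_j\gtrsim\dist(\cdot,\p\{v_j>0\})^\be$, and that in turn requires a \emph{lower} bound on $\hat g$, which the theorem does not assume. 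Without $f_j$ bounded in $L^n$, neither your weak Harnack step nor the compactness alternative goes through. The same issue surfaces in Step~1: the Harnack inequality on $B_{d/2}(x)$ carries an additive term comparable to $\|\hat g(u)\|_{L^n}$, and since $\hat g(u)\le C_1u^a$ with $a<0$, this term is not controlled by $u(x)$ (indeed one only gets $\sup u\le C(\inf u+(\inf u)^a)$, which is useless when $\inf u$ is small).

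The paper closes exactly this gap with the substitution $v:=u^{1/\be}$. A direct computation (equations (11)--(16) in \cite{AraTei13}) shows that on $\{u>0\}$
\[
F\Bigl(D^2v+\tfrac{1+a}{1-a}\,v^{-1}\,\D v\otimes\D v\Bigr)=f\,v^{-1},
\qquad f(x)=\tfrac{1-a}{2}\,u(x)^{-a}\,\hat g(u(x)),
\]
and the crucial point is that $f$ is \emph{bounded}: $f\le\tfrac{(1-a)C_1}{2}$, because the singular factor $u^a$ in $\hat g(u)$ is cancelled by $u^{-a}$. This places $v$ in the framework of \cite{AraTei13} (their Proposition~1 and Theorem~3), where the transformed equation, though still carrying $v^{-1}$ factors, is invariant under the linear rescaling $v\mapsto\mu^{-1}v(\mu\,\cdot)$ and has bounded data $f$; their compactness argument then yields Lipschitz bounds for $v$, i.e., the $\be$-growth for $u$, and $C^{1,\be-1}$ regularity follows. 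The substitution is not cosmetic: it is the mechanism that tames the singularity and makes the Harnack/compactness machinery applicable. Your argument would be repaired by inserting it.
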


\begin{proof}
The proof follows the line of \cite{AraTei13}. Let $v(x):=u(x)^{1/\be}$, $x\in\{u>0\}$. Using $F(D^2u)=\hat g(u)$ and following the computation $(11)$-$(16)$ in \cite{AraTei13}, we can get in $\{u>0\}$
$$
F\left(D^2v+\frac{1+a}{1-a}v^{-1}\cdot\D v\otimes\D v\right)=fv^{-1},
$$
where $f(x)=\left(\frac{1-a}2\right)(u(x))^{-a}\hat g(u(x))$. Note that $f(x)\le\left(\frac{1-a}2\right)(u(x))^{-a}C_1u(x)^a=\frac{(1-a)C_1}2$, i.e., $f$ is bounded in $\{u>0\}$. With the above equation for $v$ and the boundedness of $f$ at hand, we can proceed as in \cite{AraTei13}, in particular Proposition~1 and Theorem~3, to obtain Theorem~\ref{thm:grad-holder-F}.
\end{proof}

%%%%%%%%%%%%%%%%%%%%%%%%%%%%%%%%%%%%%%%%%%%%%%%%

Next, we derive the $C^{1,\al}$-regularity of energy minimizers concerning $p$-Laplacian operators treated in this paper. First, we make use of \cite{LeideQTei15} to get the result when $2\le p<\infty$.

\begin{theorem}\label{thm:grad-holder-p-geq2}
For fixed $-1<a<0$ and $C_1>0$, let $\hat g:\R\to\R$ be a function satisfying $\hg=0$ on $(-\infty,0]$ and $0\le \hg\le C_1t^a$ for $0<t<\infty$, and $\hG(t):=\int_{-\infty}^t\hg(s)\,ds$, $-\infty<t<\infty$. Let $\Omega\Subset\R^n$ be a bounded open set, $2\le p<\infty$ and $\varphi\in W^{1,p}(\Omega)\cap L^\infty(\Omega)$. If $u$ is a nonnegative energy minimizer of $$
J_{\hG}(u,\Omega)=\int_{\Omega}\left(|\D u|^p+\hG(u)\right)
$$
among all competitors $v\in W^{1,p}_0(\Omega)+\varphi$, then $u\in C^{1,\al}(\Omega)$ for some $0<\al<1$, depending only on $n$, $p$, $a$. Moreover, for every $\Omega'\Subset \Omega$, there exists a constant $C>0$ depending only on $\Omega'$, $n$, $p$, $a$, $\|\varphi\|_\infty$ such that $$
\|u\|_{C^{1,\al}(\Omega')}\le C.
$$
\end{theorem}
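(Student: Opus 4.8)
The plan is to follow the regularity theory of \cite{LeideQTei15} for the $p$-Laplacian Alt--Phillips functional, in close parallel to the proof of Theorem~\ref{thm:grad-holder-F}: reduce to a \emph{bounded} minimizer, desingularize the equation by a power substitution, and feed the resulting normalized problem into the machinery of \cite{LeideQTei15}. For the boundedness step, set $M:=\|\varphi\|_{L^\infty(\Omega)}$; since $\hG$ is nondecreasing and vanishes on $(-\infty,0)$, the truncation $w:=\min\{u,M\}$ is an admissible competitor ($w-\varphi\in W^{1,p}_0(\Omega)$, as $\varphi\le M$) and satisfies $\int_\Omega|\D w|^p\le\int_\Omega|\D u|^p$ and $\int_\Omega\hG(w)\le\int_\Omega\hG(u)$. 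Minimality of $u$ forces equality in both; the gradient equality gives $\D u=0$ a.e.\ on $\{u>M\}$, and since $(u-M)^+\le|u-\varphi|\in W^{1,p}_0(\Omega)$ has vanishing weak gradient we get $0\le u\le M$ in $\Omega$. As $\hG$ is then bounded on the range of $u$, $u$ is a bounded quasi-minimizer of the $p$-Dirichlet energy, hence locally H\"older continuous; thus $\{u>0\}$ is open and $\hg(u)$ is locally bounded there.

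Next I desingularize. On $\{u>0\}\cap\Omega$, $u$ is a weak solution of $\Delta_p u=\tfrac1p\hg(u)$. Put
\begin{equation*}
\gamma:=\frac{p}{p-1-a}\in\Big(1,\tfrac{p}{p-1}\Big),
\end{equation*}
the interval being nonempty precisely because $-1<a<0$, and set $v:=u^{1/\gamma}$ on $\{u>0\}$. A direct computation using the identity $p-\gamma(p-1-a)=0$ (the analogue of (11)--(16) in \cite{AraTei13}) gives, weakly in $\{u>0\}$,
\begin{equation*}
\Delta_p v+(\gamma-1)(p-1)\,\frac{|\D v|^p}{v}=\frac{f}{v},\qquad f:=\frac{1}{p\,\gamma^{p-1}}\,u^{-a}\hg(u),
\end{equation*}
and the hypothesis $\hg(t)\le C_1t^a$ makes $f$ bounded, $0\le f\le\tfrac{C_1}{p\gamma^{p-1}}$, with a bound independent of $u$. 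This is the $p$-Laplacian counterpart of the normalized equation appearing in the proof of Theorem~\ref{thm:grad-holder-F}.

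With this normalized equation and the uniform bounds $0\le u\le M$, $0\le f\le C$ at hand, I run the argument of \cite{LeideQTei15}. Away from the free boundary $\Gamma:=\p\{u>0\}\cap\Omega$ the source is locally bounded, so $u\in C^{1,\alpha_0}_{\loc}(\{u>0\})$ with $\alpha_0=\alpha_0(n,p)$ by the classical interior theory for the $p$-Laplacian (see e.g.\ \cite{ChoLew91}). The crux is the optimal growth bound at free boundary points: for every $\Omega'\Subset\Omega$ there is $C=C(n,p,a,M,\Omega')$ with $\sup_{B_r(x_0)}u\le C\,r^{\gamma}$ for $x_0\in\Gamma\cap\Omega'$ and small $r$. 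The exponent $\gamma$ is forced by scale invariance — the dilations $u_{x_0,r}(y):=r^{-\gamma}u(x_0+ry)$ again satisfy $\Delta_p u_{x_0,r}\le\tfrac{C_1}{p}u_{x_0,r}^a$ because $p-\gamma(p-1-a)=0$, i.e.\ the singular term is scale-neutral, which is exactly where $\gamma>1$ (equivalently $a>-1$) is used — and the bound itself is obtained by a compactness-and-dyadic-iteration scheme whose limiting profiles are $p$-harmonic, as in \cite{LeideQTei15}. Granting it, a standard covering/interpolation across the scale $\dist(x,\Gamma)$ (apply the interior gradient estimate on $B_{\dist(x,\Gamma)/2}(x)$ together with $\sup u\le C\dist(x,\Gamma)^{\gamma}$) gives $u\in C^{1,\alpha}_{\loc}(\Omega)$ with $\alpha:=\min\{\gamma-1,\alpha_0\}$, depending only on $n,p,a$, and $\|u\|_{C^{1,\alpha}(\Omega')}\le C(n,p,a,\|\varphi\|_\infty,\Omega')$. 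The main obstacle is the free-boundary growth estimate; everything else is routine $p$-Laplacian regularity theory and the familiar passage from an optimal growth bound to interior $C^{1,\alpha}$ regularity.
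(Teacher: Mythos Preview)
Your approach diverges from the paper's. The paper does \emph{not} pass through the Euler--Lagrange equation or the power substitution $v=u^{1/\gamma}$; instead it argues purely variationally, observing that the entire proof in \cite{LeideQTei15} (which treats the model case $\hat g_a(t)=C_1t^a\chi_{\{t>0\}}$) invokes the potential $\hat G_a$ at exactly two points: the truncation estimate $\int_{\{u>j\}}(\hat G(u_j)-\hat G(u))\le0$ (giving $u\in L^\infty$) and the $p$-harmonic-replacement estimate $\int_{B_R}(\hat G(u_R^\star)-\hat G(u))\le C\int_{B_R}|u_R^\star-u|^{1+a}$. Both follow from the monotonicity of $\hat G$ and the pointwise bound $\hat g\le C_1t^a$ alone, so the regularity machinery of \cite{LeideQTei15} runs verbatim. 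Your desingularization is the $p$-Laplace analogue of Theorem~\ref{thm:grad-holder-F} and is correct as a computation, but it is a detour: the resulting equation for $v$ is not what \cite{LeideQTei15} actually works with, and you still end up deferring the optimal-growth bound to that reference.

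The gap in your argument is precisely the step the paper handles explicitly: you invoke the growth bound ``as in \cite{LeideQTei15}'' without checking that the proof there---which is based on energy comparison with the $p$-harmonic replacement, not on compactness and the PDE alone---carries over from $\hat g_a$ to a general $\hat g\le C_1t^a$. Your scaling observation $\Delta_p u_{x_0,r}\le\frac{C_1}{p}u_{x_0,r}^a$ is suggestive but uses only the differential inequality, whereas the actual proof in \cite{LeideQTei15} uses minimality; the paper's contribution is to isolate and verify the two inequalities above that make that variational argument go through. A second, smaller issue: the theorem as stated assumes no continuity on $\hat g$, so writing the Euler--Lagrange equation $\Delta_p u=\tfrac1p\hat g(u)$ in $\{u>0\}$ is not justified in general, and the variational route avoids this entirely.
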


\begin{proof}
The proof of the theorem can be obtained by repeating the proof of the corresponding result in \cite{LeideQTei15}, whose consequence involves the $C^{1,\al}$-estimates of the nonnegative minimizer $v$ of $$
J_{\hG_a}(v,\Omega)=\int_{\Omega}\left(|\D v|^p+\hG_a(v)\right),
$$
where $\hG_a(v)=\int_{-\infty}^t\hg_a(s)\,ds$, $\hg_a(s)=C_1s^a\chi_{\{s>0\}}$. Indeed, in the proof for the regularity of $v$ in \cite{LeideQTei15}, the definition of $\hG_a$ is used only twice to have for any $j\in \N$ and $B_R\Subset\Omega$ \begin{align*}
    \int_{A_j}\left(\hG_a(v_j)-\hG_a(v)\right)\le0\quad\text{and}\quad \int_{B_R}\left(\hG_a(v_R^\star)-\hG_a(v)\right)\le C\int_{B_R}|v_R^\star-v|^\gamma,
\end{align*}
where $A_j:=\{v>j\}$, $v_j:=\min\{v,j\}$, $\gamma:=1+a\in(0,1)$, and $v_R^\star$ is the $p$-harmonic replacement of $u$ in $B_R$, i.e., $v_R^\star$ is the solution of the Dirichlet problem \begin{align*}
    \begin{cases}
    \Delta_pv_R^\star=0&\text{in }B_R,\\
    v_R^\star=v&\text{on }\p B_R.
    \end{cases}
\end{align*}

It is easy to see that the energy minimizer $u$ of $J_{\hG}(\cdot,\Omega)$ in our case also satisfies $$
\int_{A_j}\left(G(u_j)-G(u)\right)\le0\quad\text{and}\quad \int_{B_R}\left(G(u_R^\star)-G(u)\right)\le C\int_{B_R}|u_R^\star-u|^\gamma,
$$
where $u_R^\star$ is the $p$-harmonic replacement of $u$ in $B_R$.
In fact, the first inequality simply follows from the fact that $t\mapsto \hG(t)$ is nondecreasing and $u_j\le u$. For the second one, we assume $u_R^\star>u$ (otherwise, $\hG(u_R^\star)-\hG(u)\le 0$, and there is nothing to prove) and use Lemma~2.5 in \cite{LeideQTei15} to have $$
\hG(u_R^\star)-\hG(u)=\int_u^{u_R^\star}\hg(s)\,ds\le\int_u^hC_1s^a\,ds\le\frac{C_1}\gamma((u_R^\star)^\gamma-u^\gamma)\le\frac{C_1}\gamma(u_R^\star-u)^\gamma.
$$
This completes the proof.
\end{proof}

In the remaining of this section, we consider the case $1<p<2$ and prove an analogous regularity result with Theorem~\ref{thm:grad-holder-p-geq2}. We expect that similar results already exist in the literature, but it looks like that they are scattered and we were not able to find them. Thus, we give a complete proof of it. We start by proving a Hölder regularity with some degree $\delta=\delta(n,p)\in(0,1)$, following the line of \cite{DanPet05}.

\begin{lemma}
\label{lem:holder-reg-p}
For $1<p<2$, let $\hg$, $\hG$ be as in Theorem~\ref{thm:grad-holder-p-geq2}, $\Omega\Subset\R^n$ be a bounded open set, and $\varphi\in W^{1,p}(\Omega)\cap L^\infty(\Omega)$ be a nonnegative function. If $u$ is an energy minimizer of $$
J_{\hG}(u,\Omega)=\int_\Omega\left(|\D u|^p+\hG(u)\right)
$$
among all competitors $v\in W^{1,p}_0(\Omega)+\varphi$, then $u\in C^{0,\delta}(\Omega)$, for $\delta=\frac{p^2}{2n+p^2}\in(0,1)$. Moreover, for every $\Omega'\Subset\Omega$, there exists a constant $C>0$ depending only on $\Omega'$, $n$, $p$, $a$, $\|\varphi\|_\infty$ such that $$
\|u\|_{C^{0,\delta}(\Omega')}\le C.
$$
\end{lemma}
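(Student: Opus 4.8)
The plan is to follow the classical ``shrinking energy'' argument of De Giorgi-type used for $p$-energy minimizers, in the spirit of \cite{DanPet05}, exploiting that the extra term $\hG(u)$ is bounded and nonnegative, so it only produces lower-order contributions. First I would fix $\Omega'\Subset\Omega$ and a ball $B_{2R}(x_0)\Subset\Omega$, and compare $u$ with its $p$-harmonic replacement $\bar u$ in $B_R(x_0)$, i.e.\ the minimizer of $\int_{B_R}|\D v|^p$ with $v=u$ on $\p B_R$. Since $u$ minimizes $J_{\hG}$, testing with $\bar u$ gives
\[
\int_{B_R}|\D u|^p \le \int_{B_R}|\D \bar u|^p + \int_{B_R}\bigl(\hG(\bar u)-\hG(u)\bigr).
\]
Because $0\le u\le M:=\|\varphi\|_\infty$ by the maximum principle (a fact I would record first, comparing with the constants $0$ and $M$), and $0\le\hG(t)\le \hG(M)$ on $[0,M]$, the last integral is bounded by $\hG(M)\,|B_R| = C R^n$. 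Using the standard estimates for $p$-harmonic functions (in particular $\int_{B_R}|\D\bar u|^p\le\int_{B_R}|\D u|^p$ together with the comparison $\int_{B_\rho}|\D\bar u|^p\le C(\rho/R)^n\int_{B_R}|\D\bar u|^p$ valid for $\rho<R$, and the $L^p$ estimate $\int_{B_R}|\D(u-\bar u)|^p\le C\bigl(\int_{B_R}(|\D u|^p-|\D\bar u|^p)\bigr)$ which uses $1<p<2$), I would deduce a decay inequality of the form
\[
\int_{B_\rho(x_0)}|\D u|^p \le C\Bigl(\bigl(\tfrac{\rho}{R}\bigr)^n + \omega\Bigr)\int_{B_R(x_0)}|\D u|^p + C R^n
\]
where $\omega$ is small; iterating this (a standard lemma on geometrically decaying sequences) yields $\int_{B_\rho(x_0)}|\D u|^p\le C\rho^{n-p+p\delta'}$ for some $\delta'>0$, and then Morrey's embedding gives $u\in C^{0,\delta}$ with the asserted exponent $\delta=\frac{p^2}{2n+p^2}$, with the bound depending only on $\Omega'$, $n$, $p$, $a$, $\|\varphi\|_\infty$ (the dependence on $a$ entering only through $\hG(M)$ via the bound $\hg\le C_1 t^a$).

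The quantitative bookkeeping to land precisely on $\delta=\frac{p^2}{2n+p^2}$ is where I would be most careful: one has to track how the Caccioppoli-type inequality for $u$ (obtained by testing the Euler--Lagrange equation $\ddiv(|\D u|^{p-2}\D u)=\hg(u)$ with $(u-k)_+\zeta^p$) interacts with the bound on $\hg(u)$, which near the free boundary behaves like $u^a$ with $a<0$ and is therefore \emph{not} bounded pointwise — but it \emph{is} in $L^q_{\loc}$ for every $q<\frac{1}{-a}$, and in particular in $L^q$ for some $q>n/p$ once $-a$ is small; as noted in the remark after \eqref{eq:assump-rhs}, the case $a\in[0,1)$ reduces to negative $a$ close to $0$, so without loss of generality $\hg(u)\in L^\infty$. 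I would therefore first reduce to the case where $\hg$ is bounded, after which the right-hand side term is genuinely lower order and the De Giorgi iteration proceeds exactly as in \cite{DanPet05} for the homogeneous $p$-Laplacian.

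The main obstacle, I expect, is not any single estimate but assembling the comparison-with-$p$-harmonic-replacement scheme cleanly for $1<p<2$: in this subquadratic range the vector inequality $(|\xi|^{p-2}\xi-|\eta|^{p-2}\eta)\cdot(\xi-\eta)\ge c|\xi-\eta|^2(|\xi|+|\eta|)^{p-2}$ forces one to carry along weights $(|\D u|+|\D\bar u|)^{p-2}$, so the passage from the monotonicity estimate to a clean bound on $\int|\D(u-\bar u)|^p$ needs an extra Hölder step (with exponents $2/p$ and $2/(2-p)$). Once that technical point is handled, the iteration and the final Morrey argument are routine, and the explicit value of $\delta$ comes out of balancing the two exponents in the decay inequality. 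I would present the reduction to bounded $\hg$, the replacement comparison, the decay iteration, and the Morrey conclusion as four short steps, referring to \cite{DanPet05} for the parts that are verbatim the homogeneous case.
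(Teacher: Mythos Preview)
Your overall strategy---bound $0\le u\le M$, compare with the $p$-harmonic replacement $\bar u$ on balls, estimate the energy gap by $CR^n$ via the bound on $\hG$, deduce a Morrey decay for $\int_{B_\rho}|\D u|^p$, and conclude by Morrey embedding---is exactly the route the paper takes. Two of the details you wrote, however, are genuinely wrong and would derail the argument as stated.

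First, the estimate
\[
\int_{B_R}|\D(u-\bar u)|^p\le C\int_{B_R}\bigl(|\D u|^p-|\D\bar u|^p\bigr)
\]
that you attribute to ``$1<p<2$'' is the inequality for $p\ge 2$, and it is \emph{false} in the subquadratic range. For $1<p<2$ the monotonicity only gives the weighted bound $\int|\D(u-\bar u)|^2(|\D u|+|\D\bar u|)^{p-2}\le C\int(|\D u|^p-|\D\bar u|^p)$, and the H\"older step you mention at the end is not optional but mandatory; it produces the extra factor $\bigl(\int_{B_R}|\D u|^p\bigr)^{1-p/2}$. Consequently your decay inequality cannot have the shape $C((\rho/R)^n+\omega)\int_{B_R}|\D u|^p+CR^n$: the error term is $CR^{np/2}\bigl(\int_{B_R}|\D u|^p\bigr)^{1-p/2}$, which is nonlinear in the Dirichlet energy. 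The paper closes this by first invoking the Caccioppoli inequality for $p$-\emph{sub}solutions (using only $\Delta_p u=\hg(u)\ge 0$) to get $\int_{B_R}|\D u|^p\le CR^{n-p}$, which linearizes the error to $CR^{n-p+p^2/2}$; it then balances this against the $p$-harmonic decay via a scaling $\rho=R^{1+\e}$ with $\e=p^2/(2n)$, which is where the exponent $\delta=p^2/(2n+p^2)$ comes from. (Once you have $\int_{B_R}|\D(u-\bar u)|^p\le CR^{n-p+p^2/2}$, the standard linear iteration you propose would in fact give the stronger $\delta=p/2$; the paper's choice is simply a convenient way to record some explicit $\delta>0$.)

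Second, your ``reduction to bounded $\hg$'' is both mistaken and unnecessary. The remark you cite says that an upper bound $\hg(t)\le C_1t^a$ with $a\in[0,1)$ implies the same bound with some negative $a$, not the other way around; you cannot trade a singular $\hg$ for a bounded one. Fortunately, the proof never needs a pointwise bound on $\hg$: the energy-gap estimate uses only that $\hG$ is bounded on $[0,M]$ (which it is, since $\hG(t)\le \frac{C_1}{1+a}t^{1+a}$), and Caccioppoli uses only $\hg\ge 0$. Drop that paragraph entirely and replace your De~Giorgi/Euler--Lagrange testing by the subsolution Caccioppoli inequality, and the rest of your outline goes through.
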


\begin{proof}
Note that $\max\{u,0\}\in W^{1,p}_0(\Omega)+\varphi$ is a valid competitor for $u$, and that $J_{\hG}(\max\{u,0\},\Omega)\le J_{\hG}(u,\Omega)$ as $\hg(t)=0$ for $(-\infty,0]$. Moreover, the strict inequality $J_{\hG}(\max\{u,0\},\Omega)< J_{\hG}(u,\Omega)$ holds when $\{u<0\}$ is nonempty. Thus, we should have $u\ge0$. Similarly, $\min\{u,\|\varphi\|_\infty\}\in W^{1,p}_0(\Omega)+\varphi$ , and $J_{\hG}(\min\{u,\|\varphi\|_\infty\},\Omega)<J_{\hG}(u,\Omega)$ if $\{u>\|\varphi\|_\infty\}\neq\emptyset$. Therefore, $0\le u\le \|\varphi\|_\infty$. In the proof, universal constant $C$ may vary, but will depend only on $n$, $p$, $a$, and $\|\varphi\|_\infty$.

We follow the argument in Section~3 in \cite{DanPet05} (in particular, Lemma~3.1). For any (small) ball $B_r(x^0)\Subset \Omega$, let $u_r^\star$ be the $p$-harmonic replacement of $u$ in $B_r(x^0)$.
Without loss of generality, we may assume $x^0=0$ and use $J_{\hG}(u, B_r)\le J_{\hG}(u_r^\star,B_r)$ and $\|u_r^\star\|_{L^\infty(B_r)}\le \|u\|_{L^\infty(B_r)}\le \|\varphi\|_\infty$ to have \begin{align}
    \label{eq:min-p-har-diff-est}
    \begin{split}
    \int_{B_r}\left(|\D u|^p-|\D u_r^\star|^p\right)&\le J_{\hG}(u,B_r)-J_{\hG}(u_r^\star,B_r)+\int_{B_r}(\hG(u_r^\star)-\hG(u))\\
    &\le \int_{B_r}\hG(u_r^\star)\le Cr^n,
\end{split}\end{align}
where in the last inequality we used $\hG(u_r^\star)=\int_{-\infty}^{u_r^\star}\hg(s)\,ds\le C(u_r^\star)^{1+a}\le C\|\varphi\|_\infty^{1+a}$. The following inequalities are obtained in the beginning of Section~3 in \cite{DanPet05}, only using that $u_r^\star$ is $p$-harmonic replacement of $u$ and $1<p<2$: \begin{align*}
    &\int_{B_r}\left(|\D u|^p-|\D u_r^\star|^p\right)\ge c\int_{B_r}|\D(u-u_r^\star)|^2\left(|\D u|+|\D u_r^\star|\right)^{p-2},\\
    &\int_{B_r}|\D(u-u_r^\star)|^p\le\left(\int_{B_r}|\D (u-u_r^\star)|^2(|\D u|+|\D u_r^\star|)^{p-2}\right)^{p/2}\left(\int_{B_r}\left(|\D u|+|\D u_r^\star|\right)^p\right)^{1-p/2}.
\end{align*}
Combining these two inequalities with $\int_{B_r}|\D u_r^\star|^p\le \int_{B_r}|\D u|^p$ and \eqref{eq:min-p-har-diff-est} gives \begin{align}
    \label{eq:min-p-har-grad-diff-est}
    \int_{B_r}|\D(u-u_r^\star)|^p&\le C\left(\int_{B_r}(|\D u|^p-|\D u_r^\star|^p)\right)^{p/2}\left(\int_{B_r}|\D u|^p\right)^{1-p/2}\\
    &\le Cr^{np/2}\left(\int_{B_r}|\D u|^p\right)^{1-p/2}.
\end{align}
Moreover, using again the fact that $h$ is the $p$-harmonic replacement of $u$ and applying Theorem~1 in \cite{Man86} (with $\e=0$), we get \begin{align}
    \label{eq:p-har-grad-sup-bound}
    \sup_{B_{r/2}}|\D u_r^\star|\le \left(\frac{C}{r^n}\int_{B_r}|\D u_r^\star|^p\right)^{1/p}\le \left(Cr^{-n}\int_{B_r}|\D u|^p\right)^{1/p}.
\end{align}
Now, for small $\e>0$ to be chosen below, we have for $0<r\le r_0(\e)$ with $r^\e\le 1/2$ 
\begin{align}\label{eq:min-Morrey-est}
    \int_{B_{r^{1+\e}}}|\D u|^p&\le C\left(\int_{B_{r^{1+\e}}}|\D(u-u_r^\star)|^p+\int_{B_{r^{1+\e}}}|\D u_r^\star|^p\right)\\
    &\le C\int_{B_r}|\D(u-u_r^\star)|^p+Cr^{(1+\e)n}\|\D u_r^\star\|_{L^\infty(B_{r/2})}^p\\
    &\le Cr^{np/2}\left(\int_{B_r}|\D u|^p\right)^{1-p/2}+Cr^{\e n}\int_{B_r}|\D u|^p,
\end{align}
where the last step follows from \eqref{eq:min-p-har-grad-diff-est} and \eqref{eq:p-har-grad-sup-bound}. Since $u$ is a $p$-subsolution ($\Delta_pu=\hg(u)\ge0$), we can apply Caccioppoli inequality (see e.g. Lemma~2.9 in \cite{Lin19}, whose proof works for $p$-subsolutions as well) to obtain \begin{align}
    \label{eq:caccio-ineq}
    \int_{B_r}|\D u|^p\le \frac{C}{r^p}\int_{B_{2r}}u^p\le Cr^{n-p}.
\end{align}
This, combined with \eqref{eq:min-Morrey-est}, yields (by letting $\rho=r^{1+\e}$) \begin{align*}
    \int_{B_\rho}|\D u|^p\le C\left(\rho^{\frac{n-p+p^2/2}{1+\e}}+\rho^{\frac{n-p+\e n}{1+\e}}\right).
\end{align*}
Taking $\e=\frac{p^2}{2n}$ so that $p^2/2=\e n$, we obtain that for $\delta=\frac{p^2}{2n+p^2}$ \begin{align}
    \label{eq:min-Morrey-est-improved}
    \int_{B_\rho}|\D u|^p\le C\rho^{n-p+p\delta}.
\end{align}
As this estimate holds for any center $x^0$ and any small radius $\rho$, by applying Morrey space embedding theorem, we can obtain $u\in C^{0,\delta}(\Omega)$ and $\|u\|_{C^{0,\delta}(\Omega')}\le C$ for every $\Omega'\Subset \Omega$.
\end{proof}

Now we use bootstrapping to improve the Hölder-continuity result in Lemma~\ref{lem:holder-reg-p} to almost Lipschitz regularity.

\begin{lemma} 
\label{lem:alm-Lip-reg-p}
For $1<p<2$, let $\hg$, $\hG$ $u$, $\varphi$ be as in Lemma~\ref{lem:holder-reg-p}. Then $u\in C^{0,\sigma}(\Omega)$ for every $0<\sigma<1$. Moreover, for any $\Omega'\Subset\Omega$, there exists a constant $C>0$ depending only on $\Omega'$, $n$, $p$, $a$, $\|\varphi\|_\infty$ such that $$
\|u\|_{C^{0,\sigma}(\Omega')}\le C.
$$
\end{lemma}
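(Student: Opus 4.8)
The plan is to bootstrap the Morrey-type gradient estimate \eqref{eq:min-Morrey-est-improved} already obtained in the proof of Lemma~\ref{lem:holder-reg-p}. Precisely, I will establish the following iteration step: \emph{if for some $\si\in(0,1)$ and some subdomain $\Omega''\Subset\Omega$ one has $\int_{B_\rho(x^0)}|\D u|^p\le C\rho^{\,n-p+p\si}$ for all $x^0\in\Omega''$ and all small $\rho$, then the same holds on any $\Omega'\Subset\Omega''$ with $\si$ replaced by some $\si'=\si'(\si,n,p,a)>\si$.} Since, by Morrey's embedding, such an estimate with exponent $n-p+p\si'$ forces $u\in C^{0,\si'}_{\loc}$ together with the corresponding bound, and since the initial step is available with $\si_0=\de=\tfrac{p^2}{2n+p^2}$ from Lemma~\ref{lem:holder-reg-p}, it remains to check that finitely many iterations carry $\si_0$ past any prescribed exponent below $1$.

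For the iteration step I would rerun the comparison argument of Lemma~\ref{lem:holder-reg-p} with $u_r^\star$ the $p$-harmonic replacement of $u$ in $B_r(x^0)\Subset\Omega''$, but replace the crude bound $\int_{B_r}\hG(u_r^\star)\le Cr^n$ by one coming from the \emph{difference} $\hG(u_r^\star)-\hG(u)$. Minimality gives $\int_{B_r}(|\D u|^p-|\D u_r^\star|^p)\le\int_{B_r}(\hG(u_r^\star)-\hG(u))$; since $\hg(t)\le C_1t^a$ and $t\mapsto t^{1+a}$ is subadditive (recall $\gamma:=1+a\in(0,1)$), one has the pointwise estimate $\hG(u_r^\star)-\hG(u)\le C|u_r^\star-u|^\gamma$, exactly as in the proof of Theorem~\ref{thm:grad-holder-p-geq2} (via Lemma~2.5 in \cite{LeideQTei15}); then Hölder's inequality followed by the Poincaré inequality for $u_r^\star-u\in W^{1,p}_0(B_r)$ yields
$$
\int_{B_r}\bigl(|\D u|^p-|\D u_r^\star|^p\bigr)\le C\int_{B_r}|u_r^\star-u|^\gamma\le Cr^{\,n-\frac{\gamma n}{p}+\gamma}\Bigl(\int_{B_r}|\D(u_r^\star-u)|^p\Bigr)^{\gamma/p}.
$$
Inserting this into the two inequalities of \cite{DanPet05} quoted in Lemma~\ref{lem:holder-reg-p}, and using $\int_{B_r}|\D u_r^\star|^p\le\int_{B_r}|\D u|^p$ together with the inductive bound $\int_{B_r}|\D u|^p\le Cr^{\,n-p+p\si}$, one solves for $\int_{B_r}|\D(u-u_r^\star)|^p$ and arrives at
$$
\int_{B_r}|\D(u-u_r^\star)|^p\le Cr^{\,n+\frac{p[\gamma-(2-p)(1-\si)]}{2-\gamma}},
$$
whose exponent exceeds $n-p+p\si$ by at least the fixed positive amount $\tfrac{p\gamma}{2-\gamma}$, uniformly in $\si$. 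Combining this with the interior sup-bound $\sup_{B_{r/2}}|\D u_r^\star|\le\bigl(Cr^{-n}\int_{B_r}|\D u_r^\star|^p\bigr)^{1/p}\le Cr^{\si-1}$ from \cite{Man86}, and splitting $\int_{B_{r^{1+\e}}(x^0)}|\D u|^p\le C\int_{B_r}|\D(u-u_r^\star)|^p+Cr^{(1+\e)n}\sup_{B_{r/2}}|\D u_r^\star|^p$ for a small fixed $\e=\e(n,p,a)>0$, both resulting powers of $\rho=r^{1+\e}$ strictly beat $n-p+p\si$; this is the improved Morrey estimate, with a new exponent $n-p+p\si'$, $\si'>\si$. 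As in Lemma~\ref{lem:holder-reg-p}, letting $r$ run over all small values covers all small $\rho$, so Morrey's theorem applies.

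A short bookkeeping check shows the scheme reaches every $\si<1$ in finitely many steps: while $\si$ stays bounded away from $1$ each step raises the exponent by a fixed amount (coming from the first term), and once $\si$ is close to $1$ the contribution of the second term becomes binding and forces $n-m_{k+1}=\theta\,(n-m_k)$ with a fixed $\theta\in(0,1)$, where $m_k:=n-p+p\si_k$, so that $1-\si_k\to0$. The number of iterations needed for a given target exponent depends only on $n,p,a$; each is carried out on a slightly smaller subdomain, which is harmless since only finitely many are used. This yields $u\in C^{0,\si}(\Omega')$ for every $0<\si<1$ and every $\Omega'\Subset\Omega$, with the asserted bound depending only on $\Omega'$, $n$, $p$, $a$, $\|\vp\|_\infty$ (and $\si$).

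The main obstacle, and the reason the bootstrap of Lemma~\ref{lem:holder-reg-p} does not by itself iterate, lies in controlling the right-hand side: bounding $\int_{B_r}\hG(u_r^\star)$ directly only produces an $O(r^n)$ term, hence no gain at points where $u$ is bounded away from $0$, whereas the difference estimate $\hG(u_r^\star)-\hG(u)\le C|u_r^\star-u|^\gamma$ is \emph{pointwise} and therefore improves the decay uniformly at all points simultaneously. The remaining delicate point is purely arithmetic — checking that the per-step gain, although it tends to $0$ as $\si\uparrow1$, does so only linearly in $1-\si$, so that all exponents below $1$ are still reached after finitely many iterations.
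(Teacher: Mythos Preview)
Your argument is correct, but it takes a different route from the paper. The paper's proof is considerably simpler: it does not refine the energy comparison at all, but simply reuses the inequality \eqref{eq:min-Morrey-est}
\[
\int_{B_{r^{1+\e}}}|\D u|^p\le Cr^{np/2}\Bigl(\int_{B_r}|\D u|^p\Bigr)^{1-p/2}+Cr^{\e n}\int_{B_r}|\D u|^p,
\]
which was already derived in Lemma~\ref{lem:holder-reg-p} using the \emph{crude} bound $\int_{B_r}(|\D u|^p-|\D u_r^\star|^p)\le Cr^n$. The bootstrap comes entirely from inserting the improved Morrey input $\int_{B_r}|\D u|^p\le Cr^{n-p+p\de_k}$ (instead of the Caccioppoli bound $Cr^{n-p}$) on the right-hand side, choosing $\e=\tfrac{p^2}{2n}(1-\de_k)$ to balance the two terms, and reading off $\de_{k+1}>\de_k$. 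One then checks that the increments $\de_{k+1}-\de_k$ stay bounded below by a positive constant as long as $\de_k$ stays below any fixed $\sigma<1$, so every target exponent is reached in finitely many steps.

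So your closing diagnosis is slightly off: the crude $O(r^n)$ bound is \emph{not} the obstruction to iterating --- the paper iterates precisely with it. What you do instead is import the sharper difference estimate $\hG(u_r^\star)-\hG(u)\le C|u_r^\star-u|^\gamma$ together with Poincar\'e, which is exactly the device the paper saves for the subsequent $C^{1,\al}$ result (Theorem~\ref{thm:grad-holder-reg-p<2}). Your approach buys a cleaner per-step gain in the perturbation term (a fixed $\tfrac{p\gamma}{2-\gamma}$ in the $r$-exponent, independent of $\si$) and effectively previews the next theorem; the paper's approach buys economy --- no Poincar\'e, no subadditivity lemma, just the same inequality fed back into itself. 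Both are valid.
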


\begin{proof}
For $B_r\Subset\Omega$ and small $\e>0$ to be specified below, we have by \eqref{eq:min-Morrey-est} $$
\int_{B_{r^{1+\e}}}|\D u|^p\le Cr^{np/2}\left(\int_{B_r}|\D u|^p\right)^{1-p/2}+Cr^{\e n}\int_{B_r}|\D u|^p.
$$
Instead of \eqref{eq:caccio-ineq}, we use the improved estimate \eqref{eq:min-Morrey-est-improved}, $\int_{B_r}|\D u|^p\le Cr^{n-p+p\delta}$, to have for $\rho=r^{1+\e}$ $$
\int_{B_\rho}|\D u|^p\le C\rho^{\frac{n-p+p(\delta+p/2-p\delta/2)}{1+\e}}+C\rho^{\frac{n-p+\e n+p\delta}{1+\e}}.
$$
We take $\e=\frac{p^2}{2n}(1-\delta)$ so that the two terms on the right-hand side have the same power, and obtain $$
\int_{B_\rho}|\D u|^p\le Cr^{n-p+p\delta'},\quad \delta':=\delta+\frac{p/2(1-\delta)(1+p/n(1-\delta))}{1+\frac{p^2}{2n}(1-\delta)}.
$$
By iteration, we can find a sequence of positive numbers $\delta_1$, $\delta_2$, $\cdots$ such that $\delta_1=\delta$, $\delta_k<\delta_{k+1}\le \delta_k+\frac{p/2(1-\delta_k)(1+p/n(1-\delta_k))}{1+\frac{p^2}{2n}(1-\delta_k)}$ for $k\ge1$, and $\int_{B_\rho}|\D u|^2\le C_k\rho^{n-p+p\delta_k}$ for $0<\rho<\rho_k$ as long as $\delta_k<1$.
Note that if $0<\delta_k<\sigma$ for some $\sigma\in(0,1)$, then $$
\frac{p/2(1-\delta_k)(1+p/n(1-\delta_k))}{1+\frac{p^2}{2n}(1-\delta_k)}\ge \frac{p/2(1-\sigma)(1+p/n(1-\sigma))}{1+\frac{p^2}{2n}}.
$$
This implies that we can make $\delta_{k+1}-\delta_k$ greater than a universal positive constant, independent of $k$, as long as $\delta_k<1$. Thus, for any $\sigma\in(0,1)$ we can find $m\in\N$ such that $\sigma<\delta_m<1$ and $$
\int_{B_\rho}|\D u|^p\le C\rho^{n-p+p\delta_m}\le C\rho^{n-p+p\sigma}.
$$
As we have seen in Lemma~\ref{lem:holder-reg-p}, this Morrey-type estimate implies the $C^{0,\sigma}$-regularity.
\end{proof}

Finally, we prove the $C^{1,\al}$-regularity of energy minimizers with the help of Lemma~\ref{lem:alm-Lip-reg-p} and \cite{LeideQTei15}.

\begin{theorem}
\label{thm:grad-holder-reg-p<2}
Let $1<p<2$ and let $\hg$, $\hG$, $u$, $\varphi$ be as the above lemmas. Then there exists a constant $\al\in(0,1)$, depending only on $p$ and $a$ such that $u\in C^{1,\al}(\Omega)$. Furthermore, for every $\Omega'\Subset\Omega$, there exists a constant $C>0$, depending only on $\Omega'$, $n$, $p$, $a$, $\|\varphi\|_\infty$, such that $$
\|u\|_{C^{1,\al}(\Omega')}\le C.
$$
\end{theorem}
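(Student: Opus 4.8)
The plan is to adapt the Campanato-type iteration of \cite{LeideQTei15}, which proves the analogous statement for $2\le p<\infty$, taking as starting regularity the almost-Lipschitz bound of Lemma~\ref{lem:alm-Lip-reg-p}---so $0\le u\le\|\varphi\|_\infty$ and $u\in C^{0,\sigma}_{\loc}(\Omega)$ for every $\sigma\in(0,1)$, together with the Morrey-type gradient bound $\int_{B_r(x)}|\D u|^p\le C_\sigma\, r^{n-p+p\sigma}$, all with constants depending only on $\Omega'$, $n$, $p$, $a$, $\|\varphi\|_\infty$---and replacing the tools that are particular to $p\ge2$ by their $1<p<2$ counterparts. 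What makes the singular right-hand side harmless is that one never differentiates: the lower order term $\hG$ of the energy satisfies $0\le\hG(t)\le C_1t^{1+a}$ with $1+a\in(0,1)$, hence is bounded and Hölder, and every comparison is carried out between $u$ and its $p$-harmonic replacement $u_r^\star$ in a ball $B_r(x_0)\Subset\Omega$ through the minimality of $u$, by means of the two structural facts already verified in the proof of Theorem~\ref{thm:grad-holder-p-geq2}, namely $\int_{A_j}(\hG(u_j)-\hG(u))\le0$ and $\int_{B_r}(\hG(u_r^\star)-\hG(u))\le C\int_{B_r}|u_r^\star-u|^{1+a}$.

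The first step is the free boundary growth estimate: there is $C>0$ depending only on $\Omega'$, $n$, $p$, $a$, $\|\varphi\|_\infty$ such that for every $x_0\in\p\{u>0\}\cap\overline{\Omega'}$ and every small $r>0$,
\[
\sup_{B_r(x_0)}u\le C\,r^{\gamma},\qquad \gamma:=\frac{p}{p-1-a}>1.
\]
Writing $S(r):=\sup_{B_r(x_0)}u$, the minimality of $u$ against $u_r^\star$, together with $\hG(u_r^\star)\le C(u_r^\star)^{1+a}\le C\,S(r)^{1+a}$ and $\hG\ge0$, gives $\int_{B_r}\big(|\D u|^p-|\D u_r^\star|^p\big)\le C\,r^{n}S(r)^{1+a}$. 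Feeding in the Morrey bound for $\D u$ from Lemma~\ref{lem:alm-Lip-reg-p}, the sup-gradient bound for $u_r^\star$ from \cite{Man86}, and---this is where $p<2$ enters---the inequalities recorded in the proof of Lemma~\ref{lem:holder-reg-p}, i.e.\ $\int_{B_r}\big(|\D u|^p-|\D u_r^\star|^p\big)\ge c\int_{B_r}|\D(u-u_r^\star)|^2(|\D u|+|\D u_r^\star|)^{p-2}$ followed by Hölder's inequality to recover a bound on $\int_{B_r}|\D(u-u_r^\star)|^p$, one arrives at an estimate of the form $S(r/2)\le\max\{2^{-\gamma}S(r),\,C\,r^{\gamma}\}$ for all small $r$; iterating it over dyadic scales yields the claim. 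Since $\gamma>1$, this improves the Hölder bound near $\p\{u>0\}$, forces $|\D u|=0$ on $\p\{u>0\}$, and, by a standard covering of $\{u>0\}$ by balls centred at boundary points, gives $u(x)\le C\,\dist(x,\{u=0\})^{\gamma}$ for all $x\in\Omega'$.

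The second step is the excess-decay iteration for $\D u$. Fix $x_0\in\overline{\Omega'}$, set $E(r):=r^{-n}\int_{B_r(x_0)}|\D u-\mean{\D u}_{r}|^p$ where $\mean{\D u}_{r}$ is the mean of $\D u$ over $B_r(x_0)$, and for small $r$ let $u_r^\star$ be the $p$-harmonic replacement of $u$ in $B_r(x_0)$. The interior $C^{1,\alpha_0}$-estimates for $p$-harmonic functions (valid for all $1<p<\infty$; cf.\ \cite{Man86}) give decay of the corresponding quantity for $u_r^\star$, while the energy-comparison above---now with $S(r)$ replaced by $\osc_{B_r(x_0)}u$, which is $\le C\,r^{\gamma}$ at free boundary points by the first step and $\le C\,r$ at interior points---controls $\int_{B_r}|\D(u-u_r^\star)|^p$. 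Combining the two yields, for some $\rho\in(0,1)$,
\[
E(\rho r)\le C\rho^{p\alpha_0}E(r)+C\,r^{p\bar\alpha},
\]
with $\bar\alpha>0$ furnished by the growth estimate. A standard iteration of this, followed by the Campanato embedding, gives $\D u\in C^{0,\alpha}_{\loc}(\Omega)$ and $\|u\|_{C^{1,\alpha}(\Omega')}\le C$ for every $\Omega'\Subset\Omega$, with $\alpha=\min\{\alpha_0,\tfrac{1+a}{p-1-a}\}\in(0,1)$ as in the statement and $C$ depending only on $\Omega'$, $n$, $p$, $a$, $\|\varphi\|_\infty$.

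I expect the main obstacle to be the growth estimate in the range $1<p<2$. For $p\ge2$ the energy defect $\int_{B_r}(|\D u|^p-|\D u_r^\star|^p)$ controls $\int_{B_r}|\D(u-u_r^\star)|^p$ directly, whereas for $p<2$ it only controls the degenerate-weighted quantity $\int_{B_r}|\D(u-u_r^\star)|^2(|\D u|+|\D u_r^\star|)^{p-2}$, and Hölder's inequality then costs a factor $\big(\int_{B_r}|\D u|^p\big)^{1-p/2}$; consequently the gradient-difference estimate is coupled to an a priori Morrey bound for $\int_{B_r}|\D u|^p$, and one must propagate this coupling uniformly down to every dyadic scale while simultaneously carrying the $S(r)^{1+a}$ forcing coming from $\hG$. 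This is the same mechanism already exploited in Lemmas \ref{lem:holder-reg-p} and \ref{lem:alm-Lip-reg-p}, so the remaining work should be bookkeeping of exponents rather than a genuinely new idea.
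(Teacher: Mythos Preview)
Your plan is plausible but takes a longer and more delicate route than the paper. The paper never proves a free-boundary growth estimate at all; it closes the loop purely algebraically. After the inequality
\[
A := \int_{B_r}|\D(u-u_r^\star)|^p \le C\left(\int_{B_r}\bigl(|\D u|^p-|\D u_r^\star|^p\bigr)\right)^{p/2}\left(\int_{B_r}|\D u|^p\right)^{1-p/2}
\]
(which you also cite), the paper bounds the energy defect not by $Cr^nS(r)^{1+a}$ but by $C\int_{B_r}|u_r^\star-u|^\gamma$ with $\gamma=1+a$, and then applies Poincar\'e and H\"older to that right-hand side to obtain
\[
\int_{B_r}\bigl(|\D u|^p-|\D u_r^\star|^p\bigr)\le Cr^{\,n+\gamma-\frac{n\gamma}{p}}A^{\gamma/p}.
\]
Substituting this back and using only the Morrey bound $\int_{B_r}|\D u|^p\le Cr^{n-p\e}$ from Lemma~\ref{lem:alm-Lip-reg-p} gives a closed algebraic inequality in $A$ alone, which solves to $A\le Cr^{n+p\al}$ with $\al=\frac{\gamma}{2(2-\gamma)}$ after choosing $\e=\frac{\gamma}{2(2-p)}$. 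One then feeds this into the excess-decay inequality from Lemma~4.1 of \cite{LeideQTei15}, applies Lemma~3.4 of \cite{HanLin97}, and concludes by Campanato embedding.

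Your scheme, by contrast, first spends a separate step proving $\sup_{B_r}u\le Cr^{p/(p-1-a)}$ near the free boundary, and then feeds that growth rate into the excess decay, splitting into free-boundary versus interior points. This can be made to work, but it introduces a uniformity issue you did not address: at ``interior'' points the bound $\osc_{B_r(x_0)}u\le Cr$ requires $C^{1}$-regularity with a constant independent of $\dist(x_0,\{u=0\})$, which is precisely what you are trying to prove; the standard cure is a rescaling/covering argument that again relies on Step~1. Also, deriving the dichotomy $S(r/2)\le\max\{2^{-\gamma}S(r),Cr^\gamma\}$ from the listed ingredients is less immediate for $p<2$ than you suggest, because the gradient-difference bound only yields an $L^p$ estimate, not a pointwise one. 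None of this is fatal, but the paper's Poincar\'e trick avoids the whole detour: one obtains the correct forcing term $Cr^{n+p\al}$ in one shot, uniformly in $x_0$, without ever distinguishing free-boundary and interior points or proving any growth rate for $u$ itself.
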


\begin{proof}
Fix a small ball $B_r(x^0)\Subset\Omega$, and let $u_r^\star$ be the $p$-harmonic replacement of $u$ in $B_r(x^0)$. For simplicity we assume $x^0=0$ and use Lemma~4.1 in \cite{LeideQTei15} (which holds for $1<p<2$ as well) to have
\begin{align*}
    \int_{B_\rho}|\D u-\mean{\D u}_\rho|^p\le C\left(\frac\rho r\right)^{n+p\al_p}\int_{B_r}|\D u-\mean{\D u}_r|^p+C\int_{B_r}|\D(u-u_r^\star)|^p,\quad 0<\rho<r.
\end{align*}
Here, $\al_p\in(0,1)$ is some constant depending on $p$, and $\mean{\D u}_s=\frac1{|B_s|}\int_{B_s}u$, $0<s\le r$, is the mean value in $B_s$. Suppose we have \begin{align}
    \label{eq:min-p-har-grad-diff-est-1}
    \int_{B_r}|\D(u-u_r^\star)|^p\le Cr^{n+p\al}
\end{align}
for some constant $\al\in(0,\al_p)$, depending only on $a$ and $p$. Then $$
\int_{B_\rho}|\D u-\mean{\D u}_\rho|^p\le C\left(\frac\rho r\right)^{n+p\al_p}\int_{B_r}|\D u-\mean{\D u}_r|^p+Cr^{n+p\al}, \quad0<\rho<r.
$$
By applying Lemma~3.4 in \cite{HanLin97}, we obtain $$
\int_{B_\rho}|\D u-\mean{\D u}_\rho|^p\le C\rho^{n+p\al},
$$
and thus $\D u\in C^{0,\al}$ by Campanato space embedding theorem.

Now we prove \eqref{eq:min-p-har-grad-diff-est-1} and close the argument. To this aim, we recall the first inequality in \eqref{eq:min-p-har-grad-diff-est}: \begin{align}
    \label{eq:Min-p-har-grad-diff-est-2}
    \int_{B_r}|\D(u-u_r^\star)|^p\le C\left(\int_{B_r}(|\D u|^p-|\D u_r^\star|^p)\right)^{p/2}\left(\int_{B_r}|\D u|^p\right)^{1-p/2}.
\end{align}
We also observe that for $\gamma:=1+a\in(0,1)$ \begin{align*}
    \int_{B_r}\left(|\D u|^p-|\D u_r^\star|^p\right)&\le \int_{B_r}(\hG(u_r^\star)-\hG(u))\,dx=\int_{B_r}\int_{u(x)}^{u_r^\star(x)}\hg(s)\,dsdx\\
    &\le\int_{B_r\cap\{u_r^\star>u\}}\int_{u(x)}^{u_r^\star(x)}C_1s^a\,dsdx\le C\int_{B_r\cap\{u_r^\star>u\}}\left((u_r^\star)^\gamma-u^\gamma\right)\\
    &\le C\int_{B_r\cap\{u_r^\star>u\}}|u_r^\star-u|^\gamma\le C\int_{B_r}|u_r^\star-u|^\gamma,
\end{align*}
where we used Lemma~2.5 in \cite{LeideQTei15} in the second to last step. By applying Poincaré inequality together with Hölder's inequality, we further have \begin{align*}
    \int_{B_r}\left(|\D u|^p-|\D u_r^\star|^p\right)\le C|B_r|^{1+\frac\gamma{n}-\frac\gamma{p}}\left(\int_{B_r}|\D(u-u_r^\star)|^p\right)^{\gamma/p}=Cr^{n+\gamma-\frac{n\gamma}p}\left(\int_{B_r}|\D(u-u_r^\star)|^p\right)^{\gamma/p}.
\end{align*}
Combining this with \eqref{eq:Min-p-har-grad-diff-est-2} and writing $A:=\int_{B_r}|\D(u-u_r^\star)|^p$, we infer that $$
A\le C\left(r^{n+\gamma-\frac{n\gamma}p}A^{\gamma/p}\right)^{p/2}\left(\int_{B_r}|\D u|^p\right)^{1-p/2}.
$$
Recall that we have proved in Lemma~\ref{lem:alm-Lip-reg-p} that for any $\e\in(0,1)$ 
$$
\int_{B_r}|\D u|^p\le Cr^{n-p+p(1-\e)}=Cr^{n-p\e},\quad 0<r<r_\e.
$$
Inserting this into the equation above gives $$
A\le C\left(r^{n+\gamma-\frac{n\gamma}p}A^{\gamma/p}\right)^{p/2}r^{(n-p\e)(1-p/2)},
$$
which can be simplified as $$
A\le Cr^{n+p\left(\frac{\gamma-\e(2-p)}{2-\gamma}\right)}.
$$
Now, we take $\e=\frac\gamma{2(2-p)}$, and get $$
A\le Cr^{n+p\al}, \quad \al=\frac{\gamma}{2(2-\gamma)}.
$$
Redefining $\al=\min\left\{\frac\gamma{2(2-\gamma)},\al_p\right\}$ if necessary, we obtain \eqref{eq:min-p-har-grad-diff-est-1} and completes the proof.
\end{proof}

\section*{Declarations}

\noindent {\bf  Data availability statement:} All data needed are contained in the manuscript.

\medskip
\noindent {\bf  Funding and/or Conflicts of interests/Competing interests:} The authors declare that there are no financial, competing or conflict of interests.

%%%%%%%%%%%%%%%%%%%%%%%%%%%%%%%%%%%%%%%%%%%%%%%%%%%


\begin{thebibliography}{99} 


\bibitem{AllKriSha20}
\newblock M. Allen, D. Kriventsov and H. Shahgholian,
\newblock \emph{The Inhomogeneous Boundary Harnack Principle for Fully Nonlinear and p-Laplace equations},
\newblock  Ann. Inst. H. Poincaré C Anal. Non Linéaire, to appear 2023.  DOI 10.4171/AIHPC/40

\bibitem{AltPhi86}
\newblock H. W. Alt and D. Phillips
\newblock \emph{A free boundary problem for semilinear elliptic equations},
\newblock J. Reine Angew. Math. (1986), 63--107


\bibitem{AraTei13}
\newblock D. Ara\'{u}jo and E. Teixeira,
\newblock \emph{Geometric approach to nonvariational singular elliptic equations},
\newblock Arch. Ration. Mech. Anal. (2013), 1019--1054.


\bibitem{BiaLonSal09}
\newblock C. Bianchini, M. Longinetti and P. Salani,
\newblock \emph{Quasiconcave solutions to elliptic problems in convex rings},
\newblock Indiana Univ. Math. J. (2009), 1565--1589


\bibitem{ChoLew91}
\newblock H. Choe and J. Lewis,
\newblock \emph{On the obstacle problem for quasilinear elliptic equations of {$p$} {L}aplacian type},
\newblock SIAM J. Math. Anal. (1991), 623--638


\bibitem{ColSal03}
\newblock A. Colesanti and P. Salani,
\newblock \emph{Quasi-concave envelope of a function and convexity of level sets of solutions to elliptic equations},
\newblock Math. Nachr. (2003), 3--15


\bibitem{CuoSal06}
\newblock P. Cuoghi and P. Salani,
\newblock \emph{Convexity of level sets for solutions to nonlinear elliptic problems in convex rings},
\newblock Electron. J. Differential Equations (2006), No. 124, 12


\bibitem{DanPet05}
\newblock D. Danielli and A. Petrosyan,
\newblock \emph{A minimum problem with free boundary for a degenerate quasilinear operator},
\newblock Calc. Var. Partial Differential Equations (2005), 97--124


\bibitem{ElH21}
\newblock L. El Hajj,
\newblock \emph{A convexity problem for a semi-linear {PDE}},
\newblock Appl. Anal. (2021), 1816--1826


\bibitem{ElHSha20}
\newblock L. El Hajj and H. Shahgholian,
\newblock \emph{Remarks on the convexity of free boundaries (scalar and system cases)},
\newblock Algebra i Analiz (2020), 146--165


\bibitem{Fed59}
\newblock H. Federer,
\newblock \emph{Curvature measures},
\newblock Trans. Amer. Math. Soc. (1959), 418--491


\bibitem{FotSha17}
\newblock M. Fotouhi and H. Shahgholian,
\newblock \emph{A semilinear {PDE} with free boundary},
\newblock Nonlinear Anal. (2017), 145--163


\bibitem{GreKaw09}
\newblock A. Greco and B. Kawohl,
\newblock \emph{On the convexity of some free boundaries},
\newblock Interfaces Free Bound. (2009), 503--514


\bibitem{HanLin97}
\newblock Q. Han and F. Lin,
\newblock \emph{Elliptic partial differential equations},
\newblock Courant Lecture Notes in Mathematics, New York University, Courant Institute of Mathematical
   Sciences, New York; American Mathematical Society, Providence, RI (1997), x+144


\bibitem{Kaw98}
\newblock B. Kawohl,
\newblock \emph{Rearrangements and convexity of level sets in {PDE}},
\newblock Lecture Notes in Mathematics, 1150, Springer, Berlin (1985), iv+136



\bibitem{Kor90}
\newblock N. Korevaar,
\newblock \emph{Convexity of level sets for solutions to elliptic ring problems},
\newblock Comm. Partial Differential Equations (1990), 541--556


\bibitem{Lee98}
\newblock K. Lee,
\newblock \emph{Obstacle problems for the fully nonlinear elliptic operators},
\newblock Thesis (Ph.D.)--New York University (1998)


\bibitem{LeideQTei15}
\newblock R. Leit\~{a}o, O. de Queiroz and E. Teixeira,
\newblock \emph{Regularity for degenerate two-phase free boundary problems}
\newblock Ann. Inst. H. Poincar\'{e} C Anal. Non Lin\'{e}aire, (2015), 741--762


\bibitem{Lew83}
\newblock J. Lewis,
\newblock \emph{Regularity of the derivatives of solutions to certain degenerate elliptic equations},
\newblock Indiana Univ. Math. J. (1983), 849--858


\bibitem{Lin19}
\newblock P. Lindqvist,
\newblock \emph{Notes on the stationary {$p$}-{L}aplace equation},
\newblock SpringerBriefs in Mathematics, Springer, Cham (2019), xi+104


\bibitem{LinPri07}
\newblock E. Lindgren and Y. Privat,
\newblock \emph{A free boundary problem for the {L}aplacian with a constant {B}ernoulli-type boundary condition},
\newblock Nonlinear Anal. (2007), 2497--2505


\bibitem{Man86}
\newblock J. Manfredi,
\newblock\emph{Regularity of the gradient for a class of nonlinear possibly degenerate elliptic equations},
\newblock Thesis (Ph.D.)--Washington University in St. Louis (1986)


\bibitem{Pet01}
\newblock A. Petrosyan,
\newblock \emph{Convexity and uniqueness in a free boundary problem arising in combustion theory},
\newblock Rev. Mat. Iberoamericana (2001), 421--431


\bibitem{RicTei11}
\newblock G. Ricarte and E. Teixeira,
\newblock \emph{Fully nonlinear singularly perturbed equations and asymptotic free boundaries}
\newblock J. Funct. Anal. (2011), 1624--1673



\end{thebibliography}
\end{document}